\newcommand{\rmL}{\mathrm{L}}
\def\sigmaX{\mathcal{X}}
\def\sigmaY{\mathcal{Y}}
\newcommand{\mcg}[2][]{
\ifthenelse{\equal{#1}{}}{{\mathcal{G}_{#2}}}{\mathcal{G}_{#2}^{(#1)}}
}
\newcommand{\mcf}[2][]{
\ifthenelse{\equal{#1}{}}{{\mathcal{F}_{#2}}}{\mathcal{F}_{#2}^{(#1)}}
}
\newcommand{\mctf}[2][]{
\ifthenelse{\equal{#1}{}}{\widetilde{\mathcal{F}}_{#2}}{\widetilde{\mathcal{F}}_{#2}^{(#1)}}
}
\newcommand{\mcbf}[2][]{
\ifthenelse{\equal{#1}{}}{\overline{\mathcal{F}}_{#2}}{\overline{\mathcal{F}}_{#2}^{(#1)}}
}
\newcommand{\adjfuncforward}[1]{\vartheta_{#1}}
\newcommand{\kissforward}[3][]
{\ifthenelse{\equal{#1}{}}{p_{#2}}
{\ifthenelse{\equal{#1}{fully}}{p^{\star}_{#2}}
{\ifthenelse{\equal{#1}{smooth}}{\tilde{r}_{#2}}{\mathrm{erreur}}}}}
\newcommand{\fup}[2]{\overline{#1}_{#2}}
\newcommand{\fdown}[2]{\underline{#1}_{#2}}
\newcommand{\bdm}{\mathsf{TwoFilt}_{bdm}}
\newcommand{\fwt}{\mathsf{TwoFilt}_{fwt}}
\newcommand{\iid}{i.i.d.}
\newcommand{\eqsp}{\;}
\newcommand{\rset}{\ensuremath{\mathbb{R}}}
\newcommand{\1}{\ensuremath{\mathbf{1}}}
\newcommand{\eqdef}{\ensuremath{:=}}
\newcommand{\rmd}{\ensuremath{\mathrm{d}}}
\newcommand{\rmi}{\ensuremath{\mathrm{i}}}
\newcommand{\rme}{\ensuremath{\mathrm{e}}}
\newcommand{\esssup}[2][]
{\ifthenelse{\equal{#1}{}}{\left| #2 \right|_\infty}{\left| #2 \right|^2_{\infty}}}
\newcommand{\oscnorm}[2][]
{\ifthenelse{\equal{#1}{}}{\ensuremath{\operatorname{osc}\left(#2\right)}}{\ensuremath{\operatorname{osc}^{#1}\!\left(#2\right)}}}
\newcommand{\essosc}[3][]
{\ifthenelse{\equal{#1}{}}{\ensuremath{\operatorname{osc}_{#2}{\left(#3\right)}}}{\ensuremath{\operatorname{osc}^{#1}_{#2}\left(#3\right)}}}
\def\aux{{\scriptstyle{\mathrm{aux}}}}
\newtheorem{assumptionA}{\textbf{A}\hspace{-3pt}}
\newtheorem{assumptionmix}{\textbf{H}\hspace{-3pt}}
  \newtheorem{thm}{Theorem}
  \newtheorem{prop}[thm]{Proposition}
  \newtheorem{lem}[thm]{Lemma}
  \newtheoremstyle{assumption}
  {6pt}
  {6pt}
  {}
  {-.7em}
  {\bfseries}
  {.}
  {0.1em}
  {}
  \theoremstyle{assumption}
  \theoremstyle{definition}\newtheorem{rem}{Remark}
  \theoremstyle{definition}
\theoremstyle{definition} 
  \newtheoremstyle{example}
  {6pt}
  {6pt}
  {\itshape}
  {-.5em}
  {\itshape}
  {}
  {1.5em}
  {}
  \theoremstyle{definition}
\newcommand{\PP}{\ensuremath{\mathbb{P}}}
\newcommand{\PE}{\ensuremath{\mathbb{E}}}
\newcommand{\CPE}[3][]
{\ifthenelse{\equal{#1}{}}{\mathbb{E}\left[\left. #2 \, \right| #3 \right]}{\mathbb{E}_{#1}\left[\left. #2 \, \right| #3 \right]}}
\newcommand{\CPP}[3][]
{\ifthenelse{\equal{#1}{}}{\mathbb{P}\left(\left. #2 \, \right| #3 \right)}{\mathbb{P}_{#1}\left(\left. #2 \, \right| #3 \right)}}
\renewcommand{\mid}{\,|\,}
\newcommand{\ci}[4][]%
{%
\ifthenelse{\equal{#1}{}}{\ensuremath{#2 \perp\!\!\!\perp #3 \mid #4 }}{\ensuremath{#2 \perp\!\!\!\perp #3 \mid #4 \; \: [#1]}}%
}
\newcommand{\dlim}{\ensuremath{\stackrel{\mathcal{D}}{\longrightarrow}}}
\newcommand{\plim}{\ensuremath{\stackrel{\mathrm{P}}{\longrightarrow}}}
\newcommand{\Xset}{\ensuremath{\mathsf{X}}}
\newcommand{\Xsigma}[1][]%
{%
\ifthenelse{\equal{#1}{}}{\ensuremath{\mathcal{B}(\Xset)}}{\ensuremath{\mathcal{B}(\Xset^{#1})}}
}
\newcommand{\Yset}{\ensuremath{\mathsf{Y}}}
\newcommand{\chunk}[4][]%
{\ifthenelse{\equal{#1}{}}{\ensuremath{{#2}_{#3:#4}}}{\ensuremath{#2^#1}_{#3:#4}}
}
\newcommand{\Xinit}{\ensuremath{\chi}}
\newcommand{\XinitIS}[2][]
{\ifthenelse{\equal{#1}{}}{\ensuremath{\rho_{#2}}}{\ensuremath{\check{\rho}_{#2}}}}
\newcommand{\filt}[2][]%
{%
\ifthenelse{\equal{#1}{}}{\ensuremath{\phi_{#2}}}{\ensuremath{\phi_{#1,#2}}}%
}
\newcommand{\pred}[3][]%
{%
\ifthenelse{\equal{#1}{}}{\ensuremath{\phi_{#2|#3}}}{\ensuremath{\phi_{#1,#2|#3}}}%
}
\newcommand{\adjfunc}[4][]
{\ifthenelse{\equal{#1}{}}{\ifthenelse{\equal{#4}{}}{\vartheta_{#2|#3}}{\vartheta_{#2|#3}(#4)}}
{\ifthenelse{\equal{#1}{smooth}}{\ifthenelse{\equal{#4}{}}{\tilde{\vartheta}_{#2|#3}}{\tilde{\vartheta}_{#2|#3}(#4)}}
{\ifthenelse{\equal{#1}{fully}}{\ifthenelse{\equal{#4}{}}{\vartheta^\star_{#2|#3}}{\vartheta^\star_{#2|#3}(#4)}}{\mathrm{erreur}}}}}
\newcommand{\smwght}[3]{\tilde{\omega}_{#1|#2}^{#3}}
\newcommand{\smwghtfunc}[2]{\tilde{\omega}_{#1|#2}}
\newcommandtwoopt{\post}[4][][]%
{
\ifthenelse{\equal{#2}{}}
{\ifthenelse{\equal{#1}{}}{\ensuremath{\phi_{#3|#4}}}{\ensuremath{\phi_{#1,#3|#4}}}}%
{\ifthenelse{\equal{#2}{hat}}{\ifthenelse{\equal{#1}{}}{\ensuremath{\hat{\phi}_{#3|#4}}}{\ensuremath{\hat{\phi}_{#1,#3|#4}}}}
{\ifthenelse{\equal{#2}{tar}}{\ifthenelse{\equal{#1}{}}{\ensuremath{\hat{\phi}^{\mathrm{tar}}_{#3|#4}}}{\ensuremath{\hat{\phi}^{\mathrm{tar}}_{#1,#3|#4}}}}
{\ifthenelse{\equal{#1}{}}{\ensuremath{\hat{\phi}^{#2}_{#3|#4}}}{\ensuremath{\hat{\phi}^{#2}_{#1,#3|#4}}}}
}
}
}
\newcommand{\asymVar}[3][]{
\ifthenelse{\equal{#1}{}}{\ifthenelse{\equal{#3}{}}{\ensuremath{\Gamma_{#2}}}{\ensuremath{\Gamma_{#2}\left[#3\right]}}}
{\ifthenelse{\equal{#3}{}}{\ensuremath{\Gamma_{#1,#2}}}{\ensuremath{\Gamma_{#1,#2}\left[#3\right]}}}
}
\newcommand{\asymVarest}[3][]{
\ifthenelse{\equal{#1}{}}{\ifthenelse{\equal{#3}{}}{\ensuremath{\Gamma^N_{#2}}}{\ensuremath{\Gamma^N_{#2}\left[#3\right]}}}
{\ifthenelse{\equal{#3}{}}{\ensuremath{\Gamma^N_{#1,#2}}}{\ensuremath{\Gamma^N_{#1,#2}\left[#3\right]}}}
}
\newcommand{\backasymVar}[4][]{
\ifthenelse{\equal{#1}{}}{\ifthenelse{\equal{#4}{}}{\ensuremath{\check{\Gamma}_{#2|#3}}}{\ensuremath{\check{\Gamma}_{#2|#3}\left[#4\right]}}}
{\ifthenelse{\equal{#4}{}}{\ensuremath{\check{\Gamma}_{#1,#2|#3}}}{\ensuremath{\check{\Gamma}_{#1,#2|#3}\left[#4\right]}}}
}
\newcommand{\backasymVarest}[4][]{
\ifthenelse{\equal{#1}{}}{\ifthenelse{\equal{#4}{}}{\ensuremath{\check{\Gamma}^N_{#3|#2}}}{\ensuremath{\check{\Gamma}^N_{#3|#2}\left[#4\right]}}}
{\ifthenelse{\equal{#4}{}}{\ensuremath{\check{\Gamma}^N_{#1,#3|#2}}}{\ensuremath{\check{\Gamma}^N_{#1,#3|#2}\left[#4\right]}}}
}
\newcommand{\asymVarFearnhead}[4][]{
\ifthenelse{\equal{#1}{}}{\ensuremath{\Upsilon_{#2|#3}\left[#4\right]}}{\ensuremath{\Upsilon_{#1,#2|#3}\left[#4\right]}}
}
\newcommand{\asymVarDoucet}[5][]
{\ifthenelse{\equal{#1}{}}{\Delta^{#2}_{#3|#4}\left[#5\right]}{\Delta^{#2}_{#1,#3|#4}\left[#5\right]}}
\newcommand{\asymVarDoucetest}[5][]
{\ifthenelse{\equal{#1}{}}{\Delta^{#2,N}_{#3|#4}\left[#5\right]}{\Delta^{#2,N}_{#1,#3|#4}\left[#5\right]}}
\newcommand{\asymVarJoint}[4][]{
\ifthenelse{\equal{#1}{}}{\ensuremath{\tilde \Gamma_{#2|#3}\left[#4\right]}}{\ensuremath{\tilde \Gamma_{#1,#2|#3}\left[#4\right]}}
}
\newcommand{\asymVarJointlin}[4][]{
\ifthenelse{\equal{#1}{}}{\ensuremath{\tilde \Gamma^{\mathrm{lin}}_{#2|#3}\left[#4\right]}}{\ensuremath{\tilde \Gamma^{\mathrm{lin}}_{#1,#2|#3}\left[#4\right]}}
}
\newcommandtwoopt{\incrasymVarJoint}[5][][]
{
\ifthenelse{\equal{#1}{}}
    {\ifthenelse{\equal{#2}{forward}}{\ensuremath{\tilde{\sigma}^{2,\mathrm{f}}_{#3|#4}\left[#5\right]}}{\ensuremath{\tilde{\sigma}^{2,\mathrm{b}}_{#3|#4}\left[#5\right]}}}
    {\ifthenelse{\equal{#2}{forward}}{\ensuremath{\tilde{\sigma}^{2,\mathrm{f}}_{#1,#3|#4}\left[#5\right]}}{\ensuremath{\tilde{\sigma}^{2,\mathrm{b}}_{#1,#3|#4}\left[#5\right]}}}
}
\newcommand{\incrasymVar}[4][]{
\ifthenelse{\equal{#1}{}}{\ensuremath{\sigma^2_{#2|#3}\left[#4\right]}}{\ensuremath{\sigma^2_{#1,#2|#3}\left[#4\right]}}
}
\newcommand{\logl}[2][]%
{%
\ifthenelse{\equal{#1}{}}{\ensuremath{\ell_{#2}}}{\ensuremath{\ell_{#1,#2}}}%
}
\newcommand{\lhood}[2][]%
{%
\ifthenelse{\equal{#1}{}}{\ensuremath{\mathrm{L}_{#2}}}{\ensuremath{\mathrm{L}_{#1,#2}}}%
}
\newcommand{\cc}[2][]%
{%
\ifthenelse{\equal{#1}{}}{\ensuremath{c_{#2}}}{\ensuremath{c_{#1,#2}}}%
}
\newcommand{\forvar}[2][]%
{%
\ifthenelse{\equal{#1}{}}{\ensuremath{\alpha_{#2}}}{\ensuremath{\alpha_{#1,#2}}}%
}
\newcommand{\nforvar}[2][]%
{%
\ifthenelse{\equal{#1}{}}{\ensuremath{\bar{\alpha}_{#2}}}{\ensuremath{\bar{\alpha}_{#1,#2}}}%
}
\newcommandtwoopt{\BK}[3][][]%
{%
\ifthenelse{\equal{#2}{}}{\ifthenelse{\equal{#1}{}}{\ensuremath{\mathrm{B}_{#3}}}{\ensuremath{\mathrm{B}_{#1,#3}}}}%
{\ifthenelse{\equal{#1}{}}{\ensuremath{\hat{\mathrm{B}}_{#3}}}{\ensuremath{\hat{\mathrm{B}}_{#1,#3}}}}
}
\newcommandtwoopt{\bk}[3][][]%
{%
\ifthenelse{\equal{#2}{}}{\ifthenelse{\equal{#1}{}}{\ensuremath{\mathrm{b}_{#3}}}{\ensuremath{\mathrm{b}_{#1,#3}}}}%
{\ifthenelse{\equal{#1}{}}{\ensuremath{\hat{\mathrm{b}}_{#3}}}{\ensuremath{\hat{\mathrm{b}}_{#1,#3}}}}
}
\newcommand{\filtfunc}[2][]%
{%
\ifthenelse{\equal{#1}{}}{\ensuremath{\tau_{#2}}}{\ensuremath{\tau_{#1,#2}}}%
}
\newcommand{\NISE}[4][]%
{%
\ifthenelse{\equal{#1}{}}{\ensuremath{\tilde{#2}^{\scriptstyle \mathrm{IS}}_{#4}\left(#3 \right)}}{\ensuremath{\tilde{#2}^{\scriptstyle \mathrm{IS}}_{#1,#4}\left(#3 \right)}}%
}
\newcommand{\ISE}[4][]%
{%
\ifthenelse{\equal{#1}{}}{\ensuremath{\widehat{#2}^{\scriptstyle  \mathrm{IS}}_{#4} \left(#3 \right)}}{\ensuremath{\widehat{#2}^{\scriptstyle  \mathrm{IS}}_{#1,#4} \left(#3 \right)}}%
}
\newcommand{\SIRE}[4][]%
{%
\ifthenelse{\equal{#1}{}}{\ensuremath{\hat{#2}^{\scriptstyle  \mathrm{SIR}}_{#4} \left(#3 \right)}}{\ensuremath{\hat{#2}^{\scriptstyle  \mathrm{SIR}}_{#1,#4} \left(#3 \right)}}%
}
\newcommand{\MCE}[3]%
{
{\ensuremath{\hat{#1}^{\scriptstyle  \mathrm{MC}}_{#3} \left(#2 \right)}}%
}
\newcommand{\kiss}[3][]
{\ifthenelse{\equal{#1}{}}{r_{#2|#3}}
{\ifthenelse{\equal{#1}{fully}}{r^{\star}_{#2|#3}}
{\ifthenelse{\equal{#1}{smooth}}{\tilde{r}_{#2|#3}}{\mathrm{erreur}}}}}
\newcommand{\fakeprior}{\gamma}
\def\forward{\mathrm{f}}
\def\backward{\mathrm{b}}
\newcommand{\epart}[2]{\ensuremath{\xi_{#1}^{#2}}}
\newcommand{\smpart}[3]{\ensuremath{\tilde{\xi}_{#1|#2}^{#3}}}
\newcommand{\epartpred}[2]{\ensuremath{\xi_{#1}^{#2}}}
\newcommand{\ewght}[2]{\ensuremath{\omega_{#1}^{#2}}}
\newcommand{\ewghtfunc}[1]{\ensuremath{\omega_{#1}}}
\newcommand{\swght}[2]{\ensuremath{\omega_{#1}^{#2}}}
\newcommand{\sumwght}[2][]{%
\ifthenelse{\equal{#1}{}}{\ensuremath{\Omega_{#2}}}{\ensuremath{\Omega_{#2}^{(#1)}}}}
\newcommand{\tsumweight}[2][]{%
\ifthenelse{\equal{#1}{}}{\ensuremath{\widetilde{\Omega}_{#2}}}{\ensuremath{\widetilde{\Omega}_{#2}^{(#1)}}}}
\newcommand{\indexresidual}[2][]{%
\ifthenelse{\equal{#1}{}}{\ensuremath{J_{#2}}}{\ensuremath{J_{#2}^{(#1)}}}}
\newcommand{\efilt}[2][]%
{%
\ifthenelse{\equal{#1}{}}{\ensuremath{\hat{\phi}_{#2}}}{\ensuremath{\hat{\phi}_{#1,#2}}}%
}
\newcommand{\epost}[3][]%
{%
\ifthenelse{\equal{#1}{}}{\ensuremath{\hat{\phi}_{#2|#3}}}{\ensuremath{\hat{\phi}_{#1,#2|#3}}}%
}
\newcommandtwoopt{\backDist}[4][][]%
{
\ifthenelse{\equal{#1}{}}{\ifthenelse{\equal{#2}{}}{\psi_{#3|#4}}{\psi_{#2,#3|#4}}}%
{\ifthenelse{\equal{#1}{hat}}{\ifthenelse{\equal{#2}{}}{\hat{\psi}_{#3|#4}}{\hat{\psi}_{#2,#3|#4}}}
{\ifthenelse{\equal{#1}{tar}}{\ifthenelse{\equal{#2}{}}{\hat{\psi}^{\mathrm{tar}}_{#3|#4}}{\hat{\psi}^{\mathrm{tar}}_{#2,#3|#4}}}
{\ifthenelse{\equal{#1}{aux}}{\ifthenelse{\equal{#2}{}}{\hat{\psi}^{\mathrm{aux}}_{#3|#4}}{\hat{\psi}^{\mathrm{aux}}_{#2,#3|#4}}}
}
}
}
}
\newcommand{\backweight}{\ensuremath{{\check{\omega}}}}
\newcommand{\ebackinit}{\ensuremath{{\check{\rho}}}}
\newcommand{\ebackpart}[3]{\ensuremath{{\check{\xi}}_{#1|#2}^{#3}}}
\newcommand{\ebackwght}[3]{\ensuremath{{\check{\omega}}_{#1|#2}^{#3}}}
\newcommand{\ebackwghtfunc}[2]{\ensuremath{{\check{\omega}}_{#1|#2}}}
\newcommand{\backsumwght}[3][]{%
\ifthenelse{\equal{#1}{}}{\ensuremath{{\check{\Omega}}_{#2|#3}}}{\ensuremath{{\check{\Omega}}_{#2|#3}^{(#1)}}}}
\newcommand{\instrpostaux}[2]{\ensuremath{\pi_{#1|#2}}}
\newcommandx\functionset[3][1=,3=]{
\ifthenelse{\equal{#1}{c}}
{\mathrm{C}(\mathsf{#2})}
{\ifthenelse{\equal{#1}{bc}}{\mathrm{C}^{#3}_b(\mathsf{#2})}
{\ifthenelse{\equal{#1}{u}}{\mathrm{U}^{#3}(\mathsf{#2})}
{\ifthenelse{\equal{#1}{bu}}{\mathrm{U}_b^{#3}(\mathsf{#2})}
{\ifthenelse{\equal{#1}{l}}{\mathrm{Lip}_{#3}(\mathsf{#2})}
{\ifthenelse{\equal{#1}{cz}}{\mathrm{C}_0^{#3}(\mathsf{#2})}
{\ifthenelse{\equal{#1}{k}}{\mathrm{C}_c^{#3}(\mathsf{#2})} 
{\ifthenelse{\equal{#1}{bl}}{\mathrm{Lip}_{b#3}(\mathsf{#2})}
{\ifthenelse{\equal{#1}{ldp}}{\mathrm{Lip}_{#3}(\mathsf{#2})}
{\mathbb{F}_{#1}^{#3}(\mathsf{#2},\mathcal{#2})}
}}}}}}}}}
\begin{document}

\author{Thi Ngoc Minh Nguyen\footnote{LTCI, CNRS and T\'el\'ecom ParisTech.}\and Sylvain Le {C}orff\footnote{Laboratoire de Math\'ematiques d'Orsay, Univ. Paris-Sud, CNRS, Universit\'e Paris-Saclay.}\and Eric Moulines\footnote{Centre de Math\'ematiques Appliqu\'ees, Ecole Polytechnique.}}

\title{On the two-filter approximations of marginal smoothing distributions in general state space models}

\date{}
\lhead{Nguyen et al.}
\rhead{Two-filter approximations of marginal smoothing distributions}

\maketitle

\begin{abstract}
A prevalent problem in general state space models is the approximation of the smoothing distribution of a state
conditional on the observations from the past, the present, and the future. The aim of this paper is to provide a rigorous analysis of such approximations of smoothed distributions provided by the two-filter algorithms. We extend the results available for the approximation of smoothing distributions to these two-filter approaches which combine a forward filter approximating the filtering distributions with a backward information filter approximating a quantity proportional to the posterior distribution of the state given future observations.
\end{abstract}

\section{Introduction}
State-space models play a key role in a large variety of disciplines such as engineering, econometrics, computational biology or signal processing, see \cite{doucet:defreitas:gordon:2001,douc:moulines:stoffer:2014} and references therein. This paper provides a nonasymptotic analysis of a Sequential Monte Carlo Method (SMC) which aims at performing optimal smoothing in nonlinear and non Gaussian state space models. Given two measurable spaces $(\Xset,\sigmaX)$ and $(\Yset,\sigmaY)$, consider a bivariate stochastic process $\{(X_{t},Y_{t})\}_{t\geq 0}$ taking values in the product space $(\Xset\times\Yset,\sigmaX\otimes\sigmaY)$, where the hidden state sequence $\{X_{t}\}_{t\geq 0}$ is observed only through the observation process $\{Y_{t}\}_{t\geq 0}$.
Statistical inference in general state space models usually involves the computation  of conditional distributions of some unobserved states given a set of observations. These posterior distributions are crucial to compute smoothed expectations of additive functionals which appear naturally for maximum likelihood parameter inference in hidden Markov models (computation of the Fisher score or of the intermediate quantity of the Expectation Maximization algorithm), see \cite[Chapter $10$ and $11$]{cappe:moulines:ryden:2005}, \cite{kantas:doucet:signh:2015,doucet:poyiadjis:singh:2011,lecorff:fort:2013a,lecorff:fort:2013b}.

Nevertheless, exact computation of the filtering and smoothing distributions is possible only for linear and Gaussian state spaces or when the state space $\Xset$ is finite. This paper focuses on particular  instances of Sequential Monte Carlo methods which approximate sequences of distributions in a general state space $\Xset$ with random samples, named particles, associated with nonnegative importance weights. Those particle filters and smoothers rely on the combination of sequential importance sampling steps to propagate particles in the state space and importance resampling steps to duplicate or discard particles according to their importance weights. The first implementation of these SMC methods, introduced in \cite{gordon:salmond:smith:1993,kitagawa:1996}, propagates the particles using the Markov kernel of the hidden process $\{X_{t}\}_{t\geq 0}$ and uses a multinomial resampling step based on the importance weights to select particles at each time step. An interesting feature of this {\em Poor man's smoother} is that it provides an approximation of the joint smoothing distribution by storing the ancestral line of each particle with a complexity growing only linearly with the number $N$ of particles, see for instance \cite{delmoral:2004}. However, this smoothing algorithm has a major shortcoming since the successive resampling steps induce  an important  depletion of the particle trajectories. This degeneracy of the particle sequences leads to trajectories sharing a common ancestor path; see \cite{doucet:poyiadjis:singh:2011,jacob:murray:rubenthaler:2013} for a discussion.

Approximations of the smoothing distributions may also be obtained using the forward filtering backward smoothing decomposition in general state space models.  The Forward Filtering Backward Smoothing algorithm (FFBS) and  the Forward Filtering Backward Simulation algorithm (FFBSi) developed respectively in \cite{kitagawa:1996,huerzeler:kunsch:1998,doucet:godsill:andrieu:2000} and \cite{godsill:doucet:west:2004} avoid the path degeneracy issue of the {\em Poor man's smoother} at the cost of a computational complexity growing with $N^2$. Both algorithms rely on a forward pass which produces a set of particles and weights approximating the sequence of filtering distributions up to time $T$. Then, the backward pass of the FFBS algorithm modifies all the weights computed in the forward pass according to the so-called  backward decomposition of the smoothing distribution keeping all the particles fixed. On the other hand, the FFBSi  algorithm samples independently particle trajectories among all the possible paths produced by the forward pass. It is  shown in \cite{mongillo:deneve:2008,cappe:2011,delmoral:doucet:singh:2010} that the FFBS algorithm can be implemented using only a forward pass when approximating smoothed expectations of additive functionals but with a complexity still growing quadratically with $N$.  Under the mild assumption that the transition density of the hidden chain $\{X_{t}\}_{t\geq 0}$ is uniformly bounded above, \cite{douc:garivier:moulines:olsson:2011} proposed an accept-reject mechanism to implement the FFBSi algorithm with a complexity growing only linearly with $N$. Concentration inequalities,  controls of the $\rmL_{q}$-norm of the deviation between smoothed functionals and their approximations and Central Limit Theorems (CLT) for the FFBS and the FFBSi algorithms have been established in \cite{delmoral:doucet:singh:2010,douc:garivier:moulines:olsson:2011,dubarry:lecorff:2013}.

Recently, \cite{olsson:westerborn:2015} proposed a new SMC algorithm, the particle-based rapid incremental smoother (PaRIS), to approximate online, using only a forward pass, smoothed expectations of additive functionals. The crucial feature of this algorithm is that its complexity grows only linearly with $N$ as it samples on-the-fly particles distributed according to the backward dynamics of the hidden chain conditionally on the observations $Y_0,\ldots,Y_T$.  The authors show concentration inequalities and CLT for the estimators provided by the PaRIS algorithm.

In this paper, we extend the theoretical results available for the SMC approximations of smoothing distributions to the estimators given by the two-filter algorithms. These methods were first introduced in the particle filter literature by \cite{kitagawa:1996} and developed further by \cite{briers:doucet:maskell:2010} and \cite{fearnhead:wyncoll:tawn:2010}. The two-filter approach combines the output of two independent filters, one that evolves forward in time and approximates the filtering distributions and another that evolves backward in time approximating a quantity proportional to the posterior distribution of a state given future observations. In \cite{fearnhead:wyncoll:tawn:2010}, the authors introduced a proposal mechanism leading to algorithms whose complexity grows linearly with the number of particles.  An algorithm similar to the algorithm of  \cite{briers:doucet:maskell:2010} may also be implemented with an $\mathcal{O}(N)$ computational complexity following the same idea. We analyze all these algorithms which approximate the marginal smoothing distributions (smoothing distributions of one state given all the observations) and provide concentration inequalities as well as CLT.

This paper is organized as follows. Section~\ref{sec:TwoFilters} introduces the different particle approximations of the marginal smoothing distributions given by the two-filter algorithms. Sections~\ref{sec:ExponentialTwoFilters} and~\ref{sec:CLTTwoFilters} provide exponential deviation inequalities and CLT for the particle approximations under mild assumptions on the hidden Markov chain. Under additional {\em strong mixing assumptions}, it is shown that the results of Section~\ref{sec:ExponentialTwoFilters} are uniform in time and that the asymptotic variance in Section~\ref{sec:CLTTwoFilters} may be uniformly bounded in time. All proofs are postponed to Section~\ref{sec:proofs}.

\subsection*{Notations and conventions}
Let $\Xset$ and $\Yset$ be two general state-spaces endowed with countably generated $\sigma$-fields $\sigmaX$ and $\sigmaY$. $\functionset[b]{X}$ is the set of all real valued bounded measurable functions on $(\Xset,\sigmaX)$. $Q$ is a Markov transition kernel defined on $\Xset\times\sigmaX$ and $\{g_{t}\}_{t\geq 0}$ a family of positive functions defined on $\Xset$. For any $x \in \Xset$, $Q(x,\cdot)$ has a density $q(x, \cdot)$ with respect to a measure $\lambda$ on $(\Xset,\sigmaX)$.  The oscillation of a real valued function defined on a space $\mathsf{Z}$ is given by:
$\oscnorm{h} \eqdef \sup_{z,z'\in\mathsf{Z}}\left|h(z)-h(z')\right|$.

\section{The two-filter algorithms}
\label{sec:TwoFilters}
For any measurable function $h$ on $\Xset^{t-s+1}$, probability distribution $\chi$ on $(\Xset,\sigmaX)$, $T\geq 0$ and $0 \leq s \leq t \leq T$, define the joint smoothing distribution by:
\begin{equation}
\label{eq:smooth}
\post[\Xinit]{s:t}{T}[h] \eqdef \frac{\int \chi(\rmd x_0)g_{0}(x_0)\prod_{u=1}^{T}Q(x_{u-1},\rmd x_u)g_{u}(x_u)h(x_{s:t})}{\int \chi(\rmd x_0)g_{0}(x_0)\prod_{u=1}^{T}Q(x_{u-1},\rmd x_u)g_{u}(x_u)}\eqsp,
\end{equation}
where  $a_{u:v}$ is a short-hand notation for $\{a_s\}_{s=u}^{v}$. In the following we use the notations $\post[\Xinit]{s}{T}\eqdef \post[\Xinit]{s:s}{T}$ and $\filt{\Xinit,t}\eqdef \post[\Xinit]{t:t}{t}$. The aim of this paper is to provide a rigorous analysis of the performance of SMC algorithms approximating the sequence $\post[\Xinit]{s}{T}$ for $0\le s\le T$. The algorithms analyzed in this paper are based on the {\em two-filter formula} introduced in \cite{briers:doucet:maskell:2010,fearnhead:wyncoll:tawn:2010}, which we now detail.
\subsection{Forward filter}
Let $\{\epart{0}{\ell}\}_{\ell = 1}^N$ be \iid\ and  distributed according to the instrumental distribution $\rho_0$ and define the importance weights
\[
\ewght{0}{\ell} \eqdef \frac{\rmd \Xinit}{\rmd \XinitIS{0}}(\epart{0}{\ell}) \, g_{0}(\epart{0}{\ell})\eqsp.
\]
For any  $h\in \functionset[b]{X}$,
\[
\filt{\Xinit,0}^N[h]\eqdef \sumwght{0}^{-1}\sum_{\ell=1}^N \ewght{0}{\ell}h(\epart{0}{\ell})\eqsp,\quad\mbox{where}\quad\sumwght{0}\eqdef \sum_{\ell=1}^N \ewght{0}{\ell}\eqsp,
\]
is a consistent estimator of $\filt{\Xinit,0}[h]$, see for instance \cite{delmoral:2004}. Then, based on $\{(\epart{s-1}{\ell},\ewght{s-1}{\ell})\}_{\ell=1}^N$ a new set of particles and importance weights is obtained using the auxiliary sampler introduced in \cite{pitt:shephard:1999}. Pairs $\{ (I_s^{\ell}, \epart{s}{\ell}) \}_{\ell = 1}^N$ of indices and particles are simulated independently from the instrumental distribution with density on $\{1, \dots, N\} \times \Xset$:
\begin{equation} 
\label{eq:instrumental-distribution-filtering}
\instrpostaux{s}{s}(\ell,x) \propto \ewght{s-1}{\ell} \adjfuncforward{s}(\epart{s-1}{\ell}) \kissforward{s}{s}(\epart{s-1}{\ell},x) \eqsp,
\end{equation}
where $\adjfuncforward{s}$ is the adjustment multiplier weight function and $\kissforward{s}{s}$ is a Markovian transition density. For any  $\ell \in\{1, \dots, N\}$, $\epart{s}{\ell}$ is associated with the  importance weight defined by:
\begin{equation}
\label{eq:weight-update-filtering}
    \ewght{s}{\ell} \eqdef \frac{q(\epart{s-1}{I_s^{\ell}},\epart{s}{\ell}) g_{s}(\epart{s}{\ell})}{\adjfuncforward{s}(\epart{s-1}{I_s^{\ell}}) \kissforward{s}{s}(\epart{s-1}{I_s^{\ell}},\epart{s}{\ell})}
\end{equation}
to produce the following approximation of $\filt{\Xinit,s}[h]$:
\[
\filt{\Xinit,s}^N[h]\eqdef \sumwght{s}^{-1}\sum_{\ell=1}^N \ewght{s}{\ell}h(\epart{s}{\ell})\eqsp,\quad \mbox{where}\quad \sumwght{s}\eqdef \sum_{\ell=1}^N \ewght{s}{\ell}\eqsp.
\]
\subsection{Backward filter}
Let $\{\gamma_t\}_{t \geq 0}$ be a family of positive measurable functions such that, for all $t\in \{0, \ldots, T\}$,
\begin{equation}\label{eq:defGamma}
\int \gamma_t(x_t) \, \rmd x_t \left[\prod_{u = t+1}^T g_{u-1}(x_{u-1}) \, Q(x_{u-1}, \rmd x_u) \right] g_T(x_T) < \infty \eqsp.
\end{equation}
Following \cite{briers:doucet:maskell:2010}, for any $0\le t\le T$ we introduce the backward filtering distribution $\backDist[][\gamma]{t}{T}$ on $\sigmaX$ (referred to as the  \emph{backward information filter} in \cite{kitagawa:1996} and \cite{briers:doucet:maskell:2010}) defined, for any $h\in \functionset[b]{X}$, by:
\[
\backDist[][\gamma]{t}{T}[h] \eqdef \\
\frac{\int \gamma_t(x_t) \, \rmd x_t\left[\prod_{u=t+1}^T g_{u-1}(x_{u-1}) \, Q(x_{u-1},\rmd x_u)\right] g_T(x_T) h(x_t)}{\int \gamma_t(x_t) \, \rmd x_t \left[ \prod_{u=t+1}^T g_{u-1}(x_{u-1}) \, Q(x_{u-1},\rmd x_u )\right] g_T(x_T)} \eqsp.
\]
If the distribution of $X_t$ has probability density function $\gamma_t$, then $\backDist[][\gamma]{t}{T}$ is the conditional distribution of $X_t$ given $\chunk{Y}{t}{T}$.  Contrary to \cite{briers:doucet:maskell:2010} or \cite{fearnhead:wyncoll:tawn:2010}, $\int \gamma_t(x_t)\rmd x_t$ may be infinite. The only requirement about the nonnegative functions $\{\gamma_t\}_{t\geq 0}$ is the condition \eqref{eq:defGamma} and the fact that $\gamma_t$ should be available in closed form. Here $\gamma_t$ is a possibly improper prior introduced to make $\backDist[][\gamma]{t}{T}$ a proper posterior distribution, which is of key importance when producing particle approximations of such quantities.
For $0\le t \le T-1$, the backward information filter is computed by the recursion
\begin{equation}
\backDist[][\gamma]{t}{T}[h]\propto \int \backDist[][\gamma]{t+1}{T}(\rmd x_{t+1})\left[\gamma_t(x_t) g_t(x_t) \frac{q(x_t,x_{t+1})}{\gamma_{t+1}(x_{t+1})} \right] h(x_t) \, \rmd x_t \eqsp, \label{eq:recurBack}
\end{equation}
in the backward time direction. \eqref{eq:recurBack} is analogous to the forward filter recursion and particle approximations of the backward information filter can be obtained similarly.
Using the definition of the forward filtering distribution at time $s - 1$ and the backward information filter at time $s + 1$, the marginal smoothing distribution may be expressed as
\begin{equation}
\label{eq:filtBackfilt}
\post[\Xinit]{s}{T}[h]  \propto \int \filt{\Xinit,s-1}(\rmd x_{s-1}) \backDist[][\gamma]{s+1}{T}(\rmd x_{s+1})q(x_{s-1},x_s) g_s(x_s) \frac{q(x_s,x_{s+1})}{\gamma_{s+1}(x_{s+1})} h(x_s) \rmd x_s \eqsp.
\end{equation}
 We now describe the Sequential Monte Carlo methods used to approximate the recursion \eqref{eq:recurBack} in \cite{briers:doucet:maskell:2010}, \cite{fearnhead:wyncoll:tawn:2010}.
Let $\ebackinit_T$ be an instrumental probability density on $\Xset$ and $\{\ebackpart{T}{T}{i}\}_{i=1}^N$ be i.i.d. random variables such that $\ebackpart{T}{T}{i}\sim\ebackinit_T$ and define
\[
\ebackwght{T}{T}{i} \eqdef \frac{g_T(\ebackpart{T}{T}{i})\gamma_T(\ebackpart{T}{T}{i})}{\ebackinit_T(\ebackpart{T}{T}{i})}\eqsp.
\]
Let now $\{ (\ebackpart{t+1}{T}{i},\ebackwght{t+1}{T}{i}) \}_{i=1}^N$ be a weighted sample targeting the backward information filter distribution $\backDist[][\gamma]{t+1}{T}[h]$  at time $t+1$:
\[
\backDist[][\gamma]{t+1}{T}^N[h] \eqdef \backsumwght{t+1}{T}^{-1} \sum_{i=1}^N \ebackwght{t+1}{T}{i}h(\ebackpart{t+1}{T}{i})\eqsp,\quad\mbox{where}\quad\backsumwght{t+1}{T} \eqdef \sum_{i=1}^N \ebackwght{t+1}{T}{i}\eqsp.
\]
Plugging this approximation into \eqref{eq:recurBack} yields the target probability density
\begin{equation*}
\backDist[tar][\gamma]{t}{T}(x_t)\propto \sum_{i=1}^N \ebackwght{t+1}{T}{i} \left[\gamma_t(x_t) g_t(x_t) \frac{q(x_t,\ebackpart{t+1}{T}{i})}{\gamma_{t+1}(\ebackpart{t+1}{T}{i})} \right] \eqsp,
\end{equation*}
which is the marginal probability density function of  $x_t$ of the joint  density
\begin{equation*}
\backDist[aux][\gamma]{t}{T} ((i, x_t) \propto \frac{\ebackwght{t+1}{T}{i}}{\gamma_{t+1}(\ebackpart{t+1}{T}{i})} \gamma_t(x_t) g_t(x_t) q(x_t, \ebackpart{t+1}{T}{i}) \eqsp.
\end{equation*}
A particle approximation of the backward information filter at time $t$ can be derived by choosing an adjustment weight function $\adjfunc{t}{T}{}$ and an instrumental density kernel $\kiss{t}{T}$, and simulating $\{ (\check{I}^i_t, \ebackpart{t}{T}{i}) \}_{i=1}^N$ from
the instrumental probability density on $\{1,\ldots,N\}\times\Xset$ given by
\begin{equation}
\label{eq:ebackparticle-update-backward-filtering}
\instrpostaux{t}{T}(i,x_t)\propto  \frac{\ebackwght{t+1}{T}{i} \adjfunc{t}{T}{\ebackpart{t+1}{T}{i}}}{\gamma_{t+1}(\ebackpart{t+1}{T}{i})} \kiss{t}{T}(\ebackpart{t+1}{T}{i},x_t) \eqsp.
\end{equation}
Subsequently, the particles are associated with the importance weights
\begin{equation}
\label{eq:ebackweight-update-backward-filtering}
\ebackwght{t}{T}{i} \eqdef \frac{\gamma_t(\ebackpart{t}{T}{i}) g_t(\ebackpart{t}{T}{i}) q(\ebackpart{t}{T}{i},\ebackpart{t+1}{T}{\check{I}^i_t})}{\adjfunc{t}{T}{\ebackpart{t+1}{T}{\check{I}^i_t}} \kiss{t}{T}(\ebackpart{t+1}{T}{\check{I}^i_t}, \ebackpart{t}{T}{i})} \eqsp.
\end{equation}
Ideally, a fully adapted version of the auxiliary backward information filter is obtained by using the adjustment weights $\adjfunc[fully]{t}{T}{x}=  \int \gamma_t(x_t) g_t(x_t) q(x_t,x) \, \rmd x_t $ and the proposal kernel density
\[
\kiss[fully]{t}{T}(x, x_t)=\gamma_t(x_t) g_t(x_t) \frac{q(x_t,x)}{\adjfunc[fully]{t}{T}{x}} \eqsp,
\]
yielding uniform importance weights. Such a solution is most likely to be cumbersome from a computational perspective.

\subsection{Two-filter approximations of the marginal smoothing distributions}
Plugging the particle approximations of the forward and backward filter distributions into \eqref{eq:filtBackfilt} provides the following mixture approximation of the smoothing distribution:
\begin{equation}
\label{eq:definition-smoothing-target}
\post[\Xinit][tar]{s}{T}(x_s) \propto \sum_{i=1}^N \sum_{j=1}^N \frac{\ewght{s-1}{i} \ebackwght{s+1}{T}{j}}{\gamma_{s+1}(\ebackpart{s+1}{T}{j})}  q(\epart{s - 1}{i},x_s) g_s(x_s) q(x_s, \ebackpart{s+1}{T}{j}) \eqsp.
\end{equation}
Following the $\fwt$ algorithm of Fearnhead, Wyncoll and Tawn \cite{fearnhead:wyncoll:tawn:2010}, the probability density \eqref{eq:definition-smoothing-target} might be seen as
the marginal density of $x_s$ obtained from the joint density on the product space $\{1,\dots,N\}^2 \times \Xset$ given by
\begin{equation}
\label{eq:definition-smoothing-auxiliary}
\post[\chi][\aux]{s}{T}(i,j, x_s) \propto \frac{\ewght{s-1}{i} \ebackwght{s+1}{T}{j}}{\gamma_{s+1}(\ebackpart{s+1}{T}{j})} q(\epart{s-1}{i},x_s) g_s(x_s) q(x_s,\ebackpart{s+1}{T}{j})\eqsp.
\end{equation}
The $\fwt$ algorithm draws a set $\{(I_{s}^\ell,\check{I}_{s}^\ell,\smpart{s}{T}{\ell}) \}_{\ell=1}^N$ of indices and particle positions from the instrumental density
\begin{equation}
\label{eq:auxiliary-proposal-smoothing}
\instrpostaux{s}{T}(i,j,x_s) \propto  \frac{\ewght{s-1}{i} \adjfunc[smooth]{s}{T}{\epart{s-1}{i},\ebackpart{s+1}{T}{j}} \ebackwght{s+1}{T}{j}}{\gamma_{s+1}(\ebackpart{s+1}{T}{j})} \kiss[smooth]{s}{T}(\epart{s-1}{i},\ebackpart{s+1}{T}{j}; x_s)\eqsp,
\end{equation}
where, as above, $\adjfunc[smooth]{s}{T}{x,x'}$ is an adjustment multiplier weight function (which now depends on the forward and backward particles) and $\kiss[smooth]{s}{T}$ is an instrumental kernel. We then associate with each draw $(I_{s}^\ell,\check{I}_{s}^\ell, \smpart{s}{T}{\ell})$ the importance weight
\begin{equation}
\label{eq:weight-fearnhead}
\smwght{s}{T}{\ell} \eqdef \frac{q(\epart{s-1}{I_{s}^\ell},\smpart{s}{T}{\ell}) g_s(\smpart{s}{T}{\ell}) q(\smpart{s}{T}{\ell},\ebackpart{s+1}{T}{\check{I}_{s}^\ell})}{\adjfunc[smooth]{s}{T}{\epart{s-1}{I_{s}^\ell},\ebackpart{s+1}{T}{\check{I}_{s}^\ell}}
\kiss[smooth]{s}{T}(\epart{s-1}{I_{s}^\ell},\ebackpart{s+1}{T}{\check{I}_{s}^\ell}; \smpart{s}{T}{\ell})}\eqsp,\quad \tilde{\Omega}_{s|T}\eqdef \sum_{\ell=1}^N\smwght{s}{T}{\ell}  \eqsp.
\end{equation}
Then, the auxiliary indices $\{(I_{s}^\ell,\check{I}_{s}^\ell)\}_{\ell=1}^N$ are discarded and $\{ (\smwght{s}{T}{\ell},\smpart{s}{T}{\ell}) \}_{\ell=1}^N$ approximate the target smoothing density $\post[\Xinit][tar]{s}{T}$. Mimicking the arguments in \cite{huerzeler:kunsch:1998} and further developed in \cite{kuensch:2005}, the auxiliary particle filter is fully adapted if the adjustment weight function is $\adjfunc[fully]{s}{T}{x,x'} = \int q(x,x_s) g_s(x_s) q(x_s,x') \, \rmd x_s$ and the instrumental kernel is
\[
\kiss[fully]{s}{T}\left(x,x'; x_s\right) = q(x,x_s) g_s(x_s) q(x_s,x')/\adjfunc[fully]{s}{T}{x,x'}\eqsp.
\]
 Except in simple scenarios, simulating from the fully adapted auxiliary filter is computationally intractable.

Instead of considering the target distribution \eqref{eq:definition-smoothing-target} as the marginal of the auxiliary distribution \eqref{eq:definition-smoothing-auxiliary}
over pairs of indices, the $\bdm$ algorithm of  \cite{briers:doucet:maskell:2010} uses the following \emph{partial} auxiliary distributions having densities,
\begin{align*}
& \post[][\aux,\mathrm{f}]{s}{T}(i,x_s) \propto  \ewght{s-1}{i} q(\epart{s-1}{i}, x_s) g_s(x_s) \sum_{j=1}^N \frac{\ebackwght{s+1}{T}{j}}{\gamma_{s+1}(\ebackpart{s+1}{T}{j})} q(x_s, \ebackpart{s+1}{T}{j}) \eqsp,\\
& \post[][\aux, \mathrm{b}]{s}{T}(j,x_s) \propto  \frac{\ebackwght{s+1}{T}{j}}{\gamma_{s+1}(\ebackpart{s+1}{T}{j})} q(x_s, \ebackpart{s+1}{T}{j}) g_s(x_s) \sum_{i=1}^N \ewght{s-1}{i} q(\epart{s-1}{i},x_s) \eqsp.
\end{align*}
Since $\post[\Xinit][tar]{s}{T}$ is the marginal probability density of the partial auxiliary distributions $\post[][\aux,\mathrm{f}]{s}{T}$ and $\post[][\aux,\mathrm{b}]{s}{T}$ with respect to the forward and the backward particle indices, respectively, we may sample from $\post[\Xinit][tar]{s}{T}$ by simulating instead $\{(I_s^\ell,\epart{s}{\ell}) \}_{\ell=1}^N$ or $\{(\check{I}_{s}^\ell,\ebackpart{s}{T}{\ell}) \}_{\ell=1}^N$ from the instrumental probability density functions
\begin{align*}
&\instrpostaux{s}{T}^{\mathrm{f}}(i,x_s) \propto  \ewght{s-1}{i} \adjfuncforward{s}(\epart{s-1}{i}) \kissforward{s}{s}(\epart{s-1}{i}, x_s) \eqsp, \\
&\instrpostaux{s}{T}^{\mathrm{b}}(j,x_s) \propto  \adjfunc{s}{T}{\ebackpart{s+1}{T}{j}} \ebackwght{s+1}{T}{j} \kiss{s}{T}(\ebackpart{s+1}{T}{j}, x_s)/\gamma_{s+1}(\ebackpart{s+1}{T}{j}) \eqsp,
\end{align*}
where $(\adjfuncforward{s},\kissforward{s}{s})$ and $(\adjfunc{s+1}{T}{},\kiss{s}{T})$ are the adjustment multiplier weight functions and the instrumental kernels used in the forward and backward passes. In this case the algorithm uses the particles obtained when approximating the forward filter and backward information filter to provide two different weighted samples
$\{ (\smwght{s}{T}{i,\mathrm{f}}, \epart{s}{i}) \}_{i=1}^N$ and $\{ (\smwght{s}{T}{i,\mathrm{b}}, \ebackpart{s}{T}{i}) \}_{i=1}^N$ targeting the marginal smoothing distribution, where the
\emph{forward} $\{ \smwght{s}{T}{i, \mathrm{f}} \}_{i=1}^N$ and \emph{backward} $\{ \smwght{s}{T}{i, \mathrm{b}} \}_{i=1}^N$ importance weights are given by
\begin{align}
\label{eq:definition-smoothing-weight-forward}
&\smwght{s}{T}{i, \mathrm{f}} \eqdef \ewght{s}{i} \sum_{j=1}^N \ebackwght{s+1}{T}{j} q(\epart{s}{i},\ebackpart{s+1}{T}{j})/\gamma_{s+1}(\ebackpart{s+1}{T}{j})\eqsp,\quad \tilde{\Omega}^{\mathrm{f}}_{s|T}\eqdef\sum_{j=1}^N \smwght{s}{T}{j, \mathrm{f}}\eqsp, \\
\label{eq:definition-smoothing-weight-backward}
&\smwght{s}{T}{j, \mathrm{b}} \eqdef \ebackwght{s}{T}{j} \sum_{i=1}^N \ewght{s-1}{i} q(\epart{s-1}{i},\ebackpart{s}{T}{j})/\gamma_{s}(\ebackpart{s}{T}{j})\eqsp,\quad \hspace{.8cm}\tilde{\Omega}^{\mathrm{b}}_{s|T}\eqdef\sum_{j=1}^N \smwght{s}{T}{j, \mathrm{b}} \eqsp.
\end{align}
An important drawback of these algorithms is that the computation of the forward and backward importance weights grows quadratically with the number $N$ of particles.

\subsection{$\mathcal{O}(N)$ approximations of the marginal smoothing distributions}
In \cite{fearnhead:wyncoll:tawn:2010}, the authors introduced a proposal mechanism in \eqref{eq:auxiliary-proposal-smoothing} such that the indices $(I_{s},\check{I}_{s})$ of the forward and backward particles chosen at time $s-1$ and $s+1$ are sampled independently.  Such choices lead to algorithms whose complexity grows linearly with the number of particles. The $\mathcal{O}(N)$ algorithm displayed in  \cite{fearnhead:wyncoll:tawn:2010} suggests to use an adjustment multiplier weight function in \eqref{eq:auxiliary-proposal-smoothing} such that $I_{s}$ and $\check{I}_{s}$ are chosen according to the same distributions as the indices sampled in the forward filter and in the backward information filter. It is done in \cite{fearnhead:wyncoll:tawn:2010} by choosing $\adjfunc[smooth]{s}{T}{x,x'} = \adjfuncforward{s}(x)\adjfunc{s}{T}{x'}$ so that \eqref{eq:auxiliary-proposal-smoothing} becomes
\begin{equation}
\label{eq:inst-fearnhead:linear}
\instrpostaux{s}{T}(i,j,x_s) \propto  \ewght{s-1}{i}\adjfuncforward{s}(\epart{s-1}{i}) \frac{\adjfunc{s}{T}{\ebackpart{s+1}{T}{j}} \ebackwght{s+1}{T}{j}}{\gamma_{s+1}(\ebackpart{s+1}{T}{j})} \kiss[smooth]{s}{T}(\epart{s-1}{i},\ebackpart{s+1}{T}{j}; x_s)\eqsp.
\end{equation}
In this case, the importance weight \eqref{eq:weight-fearnhead} associated with each draw $(I_{s}^\ell,\check{I}_{s}^\ell, \smpart{s}{T}{\ell})$ is given by
\begin{equation}
\label{eq:weight-fearnhead:linear}
\smwght{s}{T}{\ell} \eqdef \frac{q(\epart{s-1}{I_{s}^\ell},\smpart{s}{T}{\ell}) g_s(\smpart{s}{T}{\ell}) q(\smpart{s}{T}{\ell},\ebackpart{s+1}{T}{\check{I}_{s}^\ell})}{\adjfuncforward{s}(\epart{s-1}{I_{s}^\ell})\adjfunc{s}{T}{\ebackpart{s+1}{T}{\check{I}_{s}^\ell}}
\kiss[smooth]{s}{T}(\epart{s-1}{I_{s}^\ell},\ebackpart{s+1}{T}{\check{I}_{s}^\ell}; \smpart{s}{T}{\ell}) } \eqsp.
\end{equation}
Instead of sampling new particles at time $s$,  an algorithm similar to the $\bdm$ algorithm of  \cite{briers:doucet:maskell:2010} which uses  the forward particles $\{\epart{s}{\ell}\}_{\ell=1}^N$ or backward particles $\{\ebackpart{s}{T}{\ell}\}_{\ell=1}^N$ may also be implemented with an $\mathcal{O}(N)$ computational complexity. 
\begin{enumerate}[(a)]
\item Choosing $\kiss[smooth]{s}{T}(\epart{s-1}{I_{s}^\ell},\ebackpart{s+1}{T}{\check{I}_{s}^\ell}; x_s) = \kiss{s}{T}(\ebackpart{s+1}{T}{\check{I}_{s}^\ell}, x_s)$ in \eqref{eq:inst-fearnhead:linear}, 
the smoothing distribution approximation is obtained by reweighting the particles obtained in the backward pass. The backward particles $\{\ebackpart{s}{T}{\ell}\}_{\ell=1}^N$ are associated with the importance weights:
\begin{align}
\nonumber
\smwght{s}{T}{\ell} &\eqdef \frac{ \gamma_s(\ebackpart{s}{T}{\ell})g_s(\ebackpart{s}{T}{\ell}) q(\ebackpart{s}{T}{\ell},\ebackpart{s+1}{T}{\check{I}_{s}^\ell})}{\adjfunc{s}{T}{\ebackpart{s+1}{T}{\check{I}_{s}^\ell}}
\kiss{s}{T}(\ebackpart{s+1}{T}{\check{I}_{s}^\ell}, \ebackpart{s}{T}{\ell})} \frac{q(\epart{s-1}{I_{s}^\ell},\ebackpart{s}{T}{\ell})}{\gamma_s(\ebackpart{s}{T}{\ell})\adjfuncforward{s}(\epart{s-1}{I_{s}^\ell})}\eqsp,\\
&= \ebackwght{s}{T}{\ell}\frac{q(\epart{s-1}{I_{s}^\ell},\ebackpart{s}{T}{\ell})}{\gamma_s(\ebackpart{s}{T}{\ell})\adjfuncforward{s}(\epart{s-1}{I_{s}^\ell})}\eqsp.\label{eq:weight-bdm:linear:bkd}
\end{align}
\item Choosing $\kiss[smooth]{s}{T}(\epart{s-1}{I_{s}^\ell},\ebackpart{s+1}{T}{\check{I}_{s}^\ell}; x_s) = \kissforward{s}{s}(\epart{s-1}{I_{s}^\ell}, x_s)$ in \eqref{eq:inst-fearnhead:linear}, the smoothing distribution approximation is obtained by reweighting the particles obtained in the forward filtering pass. The forward particles $\{\epart{s}{\ell}\}_{\ell=1}^N$ are associated with the importance weights:
\begin{equation}
\label{eq:weight-bdm:linear:fwd}
\smwght{s}{T}{\ell} \eqdef \frac{q(\epart{s-1}{I_{s}^\ell},\epart{s}{\ell}) g_s(\epart{s}{\ell}) }{\adjfuncforward{s}(\epart{s-1}{I_{s}^\ell})
\kissforward{s}{s}(\epart{s-1}{I_{s}^\ell}, \epart{s}{\ell}) } \frac{q(\epart{s}{\ell},\ebackpart{s+1}{T}{\check{I}_{s}^\ell})}{\adjfunc{s}{T}{\ebackpart{s+1}{T}{\check{I}_{s}^\ell}}} =  \ewght{s}{\ell}\frac{q(\epart{s}{\ell},\ebackpart{s+1}{T}{\check{I}_{s}^\ell})}{\adjfunc{s}{T}{\ebackpart{s+1}{T}{\check{I}_{s}^\ell}}}\eqsp.
\end{equation}
\end{enumerate}

\section{Exponential deviation inequality for  the two-filter algorithms}
\label{sec:ExponentialTwoFilters}
In this section, we establish exponential deviation inequalities for  the two-filter algorithms introduced in Section~\ref{sec:TwoFilters}. Before stating the results, some additional notations are required. Define, for all $(x,x',x'') \in \Xset^3$,
\begin{equation*}
q^{[2]}(x,x';x'') = q(x,x'') q(x'',x')
\end{equation*}
and for any functions $f: \Xset^2 \to \rset$ and $g: \Xset \to \rset$,
\[
f \odot g(x,x') \eqdef f(x,x') g(x')\eqsp.
\]
Consider the following assumptions:
\begin{assumptionA}
\label{assum:bound-likelihood}
$|q|_{\infty}<\infty$ and for all $0\le t \le T$, $g_t$ is positive and $|g_t|_{\infty}<\infty$.
\end{assumptionA}

\begin{assumptionA}
\label{assum:borne-FFBS}
For all $0\le t \le T$, $|\vartheta_t|_{\infty} < \infty$, $|p_t|_{\infty} < \infty$ and $|\ewght{t}{}|_{\infty} < \infty$ where
    \begin{equation*}
    \omega_0(x) \eqdef \dfrac{\rmd\chi}{\rmd\rho_0}(x)g_{0}(x)
    \quad\mbox{and\;for\;all\;}t\ge 1\quad
    \omega_t(x,x^{\prime}) \eqdef \dfrac{q(x,x^{\prime})g_{t}(x')}{\vartheta_t(x)p_t(x,x^{\prime})}\eqsp.
    \end{equation*}
\end{assumptionA}

\begin{assumptionA}
\label{assum:borne-TwoFilters}
\begin{enumerate}[-]
\item For all $0\le t \le T-1$, $\esssup{\adjfunc{t}{T}{}/\fakeprior_{t+1}}  < \infty$ and $|\kiss{t}{T}|_{\infty}<\infty$. For all $0\le t \le T$ $\esssup{\ebackwghtfunc{t}{T}}  < \infty$, where
\[
\ebackwghtfunc{T}{T}(x)\eqdef \frac{g_T(x)\gamma_T(x)}{\ebackinit_T(x)}\eqsp\mbox{and\;for\;all\;}0\le t< T,\eqsp \ebackwghtfunc{t}{T}(x,x')\eqdef  \frac{\gamma_t(x) g_t(x) q(x,x')}{\adjfunc{t}{T}{x'} \kiss{t}{T}(x',x)}\eqsp.
\]
\item For all $1\le t \le T-1$, $\esssup{\adjfunc[smooth]{t}{T}{} \odot \gamma_{t+1}^{-1}}< \infty$, $\esssup{q \odot \gamma_{t+1}^{-1}}< \infty$, $\esssup{\smwghtfunc{t}{T}}  < \infty$ and $\esssup{\kiss[smooth]{t}{T}}<\infty$ where
\[
\smwght{t}{T}{}(x,x';x'') \eqdef \frac{q^{[2]}(x,x';x'')  g_s(x'') }{\adjfunc[smooth]{t}{T}{x,x''}
\kiss[smooth]{t}{T}(x,x'; x'') } \eqsp.
\]
\end{enumerate}
\end{assumptionA}
We first show that the weighted sample $\{(\swght{s}{i} \ebackwght{t}{T}{j}), (\epartpred{s}{i},\ebackpart{t}{T}{j})\}_{i,j=1}^N$ targets the product distribution $\filt{\Xinit,s}\otimes \backDist[][\gamma]{t}{T}$.
\begin{thm}
\label{thm:Hoeffding-Two-Populations}
Assume that A\ref{assum:bound-likelihood}, A\ref{assum:borne-FFBS} and A\ref{assum:borne-TwoFilters} hold for some $T<\infty$. Then, for all $0 \leq s<t \leq T$, there exist $0 <B_{s,t|T}, C_{s,t|T} <\infty$ such that for  all  $N\ge 1$, $\epsilon > 0$ and all $h \in \mathbb{F}_b(\Xset\times \Xset,\sigmaX\otimes\sigmaX)$,
\begin{equation*}
\PP\left( \left| \sum_{i,j=1}^N \frac{\swght{s}{i}}{\sumwght{s}}\frac{\ebackwght{t}{T}{j}}{\backsumwght{t}{T}} h(\epartpred{s}{i},\ebackpart{t}{T}{j}) - \filt{\Xinit,s}\otimes \backDist[][\gamma]{t}{T}[h] \right| > \epsilon \right) \leq  B_{s,t|T} \rme^{-C_{s,t|T} N \epsilon^2/\oscnorm[2]{h}} \eqsp.
\end{equation*}
\end{thm}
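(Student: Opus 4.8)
The proof rests on the fact that the two particle systems are independent. The forward weighted sample $\{(\swght{s}{i},\epartpred{s}{i})\}_{i=1}^N$ is built from the i.i.d.\ draws $\{\epart{0}{\ell}\}$ and the successive auxiliary sampling steps \eqref{eq:instrumental-distribution-filtering}, while the backward sample $\{(\ebackwght{t}{T}{j},\ebackpart{t}{T}{j})\}_{j=1}^N$ is built from the separate i.i.d.\ draws $\{\ebackpart{T}{T}{i}\}$ and the recursion \eqref{eq:ebackparticle-update-backward-filtering}; the two collections of random variables are disjoint, hence independent. The plan is therefore to obtain a Hoeffding-type inequality for each single filter separately and then to glue them together through a conditioning argument.

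First I would prove the two \emph{marginal} exponential inequalities: under A\ref{assum:bound-likelihood}--A\ref{assum:borne-FFBS} there are finite constants such that, for every $h\in\functionset[b]{X}$,
\[
\PP\left(\left|\filt{\Xinit,s}^N[h]-\filt{\Xinit,s}[h]\right|>\epsilon\right)\le B\,\rme^{-CN\epsilon^2/\oscnorm[2]{h}}\eqsp,
\]
and, under A\ref{assum:bound-likelihood} together with the first item of A\ref{assum:borne-TwoFilters}, the analogous bound holds for $\backDist[][\gamma]{t}{T}^N$. Each follows by induction on the (forward, resp.\ backward) time index. The base cases are classical Hoeffding inequalities for the i.i.d.\ weighted initial samples, the weights being bounded by A\ref{assum:borne-FFBS} (resp.\ by the first item of A\ref{assum:borne-TwoFilters}). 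For the inductive step, conditionally on the current particle system the new pairs of index and position are drawn independently from the instrumental density, so a Hoeffding bound for conditionally i.i.d.\ summands controls the one-step fluctuation; this is then combined with the induction hypothesis by decomposing the normalized estimator into its (unnormalized) numerator and its denominator, both of which are averages of bounded quantities thanks to A\ref{assum:bound-likelihood}. The finiteness of $|q|_\infty$, $|g_t|_\infty$ and of the various weight functions keeps all constants finite for fixed $T$.

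Writing $\mu\eqdef\filt{\Xinit,s}$, $\nu\eqdef\backDist[][\gamma]{t}{T}$ and $\mu^N,\nu^N$ for their particle approximations, I would then decompose
\[
\mu^N\otimes\nu^N[h]-\mu\otimes\nu[h]=\left\{\mu^N\otimes\nu^N[h]-\mu\otimes\nu^N[h]\right\}+\left\{\mu\otimes\nu^N[h]-\mu\otimes\nu[h]\right\}\eqsp.
\]
The second bracket equals $\nu^N[h_\mu]-\nu[h_\mu]$ with the deterministic function $h_\mu(\cdot)\eqdef\int\mu(\rmd x)\,h(x,\cdot)$, which satisfies $\oscnorm{h_\mu}\le\oscnorm{h}$, so the backward inequality applies directly. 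For the first bracket, I would condition on the backward sample: given it, the map $x\mapsto h_{\nu^N}(x)\eqdef\sum_{j}(\ebackwght{t}{T}{j}/\backsumwght{t}{T})\,h(x,\ebackpart{t}{T}{j})$ is a fixed bounded function whose oscillation is at most $\oscnorm{h}$ (a weighted average over the second coordinate cannot increase the oscillation in the first), and the first bracket is exactly $\mu^N[h_{\nu^N}]-\mu[h_{\nu^N}]$. By independence, the forward inequality holds conditionally on the backward sample, with a bound $B\,\rme^{-CN\epsilon^2/\oscnorm[2]{h}}$ that does not depend on the realization of that sample; taking expectation preserves it. A union bound (splitting $\epsilon$ into halves) on the two brackets and a merging of constants then yields the claimed inequality with suitable $B_{s,t|T},C_{s,t|T}$.

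The main obstacle I anticipate is the inductive proof of the single-filter inequalities, and in particular the backward information filter: the function $\gamma_t$ may be improper ($\int\gamma_t$ possibly infinite) and the recursion \eqref{eq:recurBack} is only analogous to, not identical with, the standard forward filter recursion, so the numerator/denominator decomposition and the associated bounded-weight bookkeeping must be redone carefully there. The remaining delicate point is purely organizational: verifying that the forward-filter constants in the first bracket genuinely depend on the integrand only through its oscillation, so that the conditioning on the (random) backward sample produces a bound uniform over that sample and the expectation step is legitimate.
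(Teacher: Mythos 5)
Your proposal is correct and follows essentially the same route as the paper: the paper uses exactly your two-bracket decomposition, handling the term $\mu^N\otimes\nu^N[h]-\mu\otimes\nu^N[h]$ by conditioning on the backward particle system and applying the forward-filter exponential inequality (Proposition~\ref{prop:exponential-inequality-forward}) to the function $f_{t|T}(x)=\backsumwght{t}{T}^{-1}\sum_{j}\ebackwght{t}{T}{j}h(x,\ebackpart{t}{T}{j})$, whose oscillation is bounded by $\oscnorm{h}$, and the term $\mu\otimes\nu^N[h]-\mu\otimes\nu[h]$ by the backward-filter inequality (Proposition~\ref{prop:exponential-inequality-backward}). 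The only difference is organizational: the paper imports the two single-filter inequalities as ready-made results (proved in the cited reference \cite{douc:garivier:moulines:olsson:2011}) rather than re-deriving them by induction as you outline.
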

\begin{proof}
The proof is postponed to Section~\ref{proof:thm:Hoeffding-Two-Populations}.
\end{proof}
We now study the weighted sample $\{ (\smwght{s}{T}{i},\smpart{s}{T}{\ell}) \}_{\ell=1}^N$ produced by the $\fwt$ algorithm of Fearnhead, Wyncoll and Tawn \cite{fearnhead:wyncoll:tawn:2010} defined in \eqref{eq:auxiliary-proposal-smoothing} and \eqref{eq:weight-fearnhead} and targeting the marginal smoothing distribution $\post[\chi]{s}{T}$.
\begin{thm}[deviation inequality for $\fwt$ of \cite{fearnhead:wyncoll:tawn:2010}]
\label{thm:Hoeffding-Fearnhead}
Assume that A\ref{assum:bound-likelihood}, A\ref{assum:borne-FFBS} and A\ref{assum:borne-TwoFilters} hold for some $T<\infty$. Then, for all $s<T$, there exist $0 < B_{s|T},C_{s|T} < \infty$ such that for all $N\ge 1$, $\varepsilon>0$ and all  $h \in \functionset[b]{X}$,
\begin{equation*}
\PP\left( \left| \sum_{i=1}^N \frac{\smwght{s}{T}{i}}{\tilde{\Omega}_{s|T}} h(\smpart{s}{T}{i}) - \post[\chi]{s}{T} [h] \right| > \epsilon \right) \leq  B_{s|T} \rme^{-C_{s|T} N \epsilon^2 / \oscnorm[2]{h}} \eqsp.
\end{equation*}
\end{thm}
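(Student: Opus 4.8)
The plan is to control the error of $\hat\phi^N_{s|T}[h]\eqdef\tilde\Omega_{s|T}^{-1}\sum_{i=1}^N\smwght{s}{T}{i}h(\smpart{s}{T}{i})$ by splitting it through the random target density $\post[\Xinit][tar]{s}{T}$ of \eqref{eq:definition-smoothing-target},
\[
\hat\phi^N_{s|T}[h]-\post[\chi]{s}{T}[h]=\big(\hat\phi^N_{s|T}[h]-\post[\Xinit][tar]{s}{T}[h]\big)+\big(\post[\Xinit][tar]{s}{T}[h]-\post[\chi]{s}{T}[h]\big)\eqsp,
\]
bounding each summand by an exponential inequality and concluding by a union bound. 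The first term is the fresh Monte Carlo error of the draws performed at time $s$, conditionally on the forward and backward samples; the second is the error inherited from the forward filter at $s-1$ and the backward filter at $s+1$. I treat the representative interior case $1\le s\le T-1$, the endpoint $s=0$ being analogous with $\filt{\Xinit,s-1}$ replaced by the initialisation.

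For the second term, introduce on $\Xset\times\Xset$ the kernel $\Lambda_s h(x,x')\eqdef\gamma_{s+1}(x')^{-1}\int q(x,x_s)g_s(x_s)q(x_s,x')h(x_s)\,\rmd x_s$. A direct rewriting of \eqref{eq:definition-smoothing-target} shows that
\[
\post[\Xinit][tar]{s}{T}[h]=\frac{\filt{\Xinit,s-1}^N\otimes\backDist[][\gamma]{s+1}{T}^N[\Lambda_s h]}{\filt{\Xinit,s-1}^N\otimes\backDist[][\gamma]{s+1}{T}^N[\Lambda_s\1]}\eqsp,
\]
i.e.\ $\post[\Xinit][tar]{s}{T}[h]$ is the ratio of the product-sample estimators of Theorem~\ref{thm:Hoeffding-Two-Populations} (taken at times $s-1$ and $s+1$) evaluated at $\Lambda_s h$ and $\Lambda_s\1$, while the two-filter formula \eqref{eq:filtBackfilt} gives $\filt{\Xinit,s-1}\otimes\backDist[][\gamma]{s+1}{T}[\Lambda_s h]/\filt{\Xinit,s-1}\otimes\backDist[][\gamma]{s+1}{T}[\Lambda_s\1]=\post[\chi]{s}{T}[h]$. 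Setting $\bar h\eqdef h-\post[\chi]{s}{T}[h]$, so that $\filt{\Xinit,s-1}\otimes\backDist[][\gamma]{s+1}{T}[\Lambda_s\bar h]=0$, the second term equals $\filt{\Xinit,s-1}^N\otimes\backDist[][\gamma]{s+1}{T}^N[\Lambda_s\bar h]\big/\filt{\Xinit,s-1}^N\otimes\backDist[][\gamma]{s+1}{T}^N[\Lambda_s\1]$. Under A\ref{assum:bound-likelihood} and A\ref{assum:borne-TwoFilters} the functions $\Lambda_s\bar h$ and $\Lambda_s\1$ are bounded, $\oscnorm{\Lambda_s\bar h}\le C\oscnorm{h}$, and $\filt{\Xinit,s-1}\otimes\backDist[][\gamma]{s+1}{T}[\Lambda_s\1]$ is a strictly positive deterministic constant; Theorem~\ref{thm:Hoeffding-Two-Populations} applied to the numerator, together with the event $\{\filt{\Xinit,s-1}^N\otimes\backDist[][\gamma]{s+1}{T}^N[\Lambda_s\1]\ge\tfrac12\filt{\Xinit,s-1}\otimes\backDist[][\gamma]{s+1}{T}[\Lambda_s\1]\}$ (also controlled by Theorem~\ref{thm:Hoeffding-Two-Populations} applied to $\Lambda_s\1$), then yields the $\rme^{-CN\epsilon^2/\oscnorm[2]{h}}$ bound.

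For the first term, let $\mathcal{G}_N$ be the $\sigma$-field generated by the forward particles up to $s-1$ and the backward particles from $s+1$ to $T$. Conditionally on $\mathcal{G}_N$ the triples $(I_s^\ell,\check I_s^\ell,\smpart{s}{T}{\ell})$ are \iid\ from \eqref{eq:auxiliary-proposal-smoothing}; since $\smwght{s}{T}{1}$ is the ratio of the unnormalised target \eqref{eq:definition-smoothing-auxiliary} to the unnormalised proposal \eqref{eq:auxiliary-proposal-smoothing}, integrating against the proposal gives $\CPE{\smwght{s}{T}{1}h(\smpart{s}{T}{1})}{\mathcal{G}_N}=\beta^N\post[\Xinit][tar]{s}{T}[h]$ with $\beta^N\eqdef\CPE{\smwght{s}{T}{1}}{\mathcal{G}_N}$, so that $\post[\Xinit][tar]{s}{T}$ is the conditional target of the self-normalised estimator. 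Writing $\hat\phi^N_{s|T}[h]-\post[\Xinit][tar]{s}{T}[h]$ as the ratio of $N^{-1}\sum_\ell\smwght{s}{T}{\ell}(h-\post[\Xinit][tar]{s}{T}[h])(\smpart{s}{T}{\ell})$ to $N^{-1}\sum_\ell\smwght{s}{T}{\ell}$, the numerator is a conditionally centred average of \iid\ terms bounded by $\esssup{\smwghtfunc{s}{T}}\oscnorm{h}$ (A\ref{assum:borne-TwoFilters}), to which the conditional Hoeffding inequality applies, while the denominator concentrates around $\beta^N$. Crucially, $\beta^N$ is bounded below on the same good event as above: using that $\adjfunc[smooth]{s}{T}{}\odot\gamma_{s+1}^{-1}$ is bounded (A\ref{assum:borne-TwoFilters}) one obtains $\beta^N\ge\filt{\Xinit,s-1}^N\otimes\backDist[][\gamma]{s+1}{T}^N[\Lambda_s\1]\big/\esssup{\adjfunc[smooth]{s}{T}{}\odot\gamma_{s+1}^{-1}}$, hence a deterministic positive lower bound on that event; taking expectation over $\mathcal{G}_N$ then gives the exponential bound for the first term.

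The delicate point, and where I expect the real work to lie, is the self-normalisation: both terms are ratios whose denominators ($\filt{\Xinit,s-1}^N\otimes\backDist[][\gamma]{s+1}{T}^N[\Lambda_s\1]$ and $N^{-1}\sum_\ell\smwght{s}{T}{\ell}$) are random and a priori possibly small, so one must produce deterministic lower bounds uniform in $N$ and independent of $h$. As sketched, this hinges on routing the conditional normaliser $\beta^N$ through $\filt{\Xinit,s-1}^N\otimes\backDist[][\gamma]{s+1}{T}^N[\Lambda_s\1]$ and invoking Theorem~\ref{thm:Hoeffding-Two-Populations} for $\Lambda_s\1$, which couples the two error terms through a single good event. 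The remaining effort is the constant bookkeeping needed to absorb the probability of the complementary event into the final $\rme^{-C_{s|T}N\epsilon^2/\oscnorm[2]{h}}$ rate, using that only $\epsilon\le\oscnorm{h}$ is relevant since $|\hat\phi^N_{s|T}[h]-\post[\chi]{s}{T}[h]|\le\oscnorm{h}$ always holds.
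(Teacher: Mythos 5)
Your proof is correct, and it rests on exactly the two estimates that drive the paper's proof: a conditional Hoeffding bound for the fresh draws at time $s$ given the $\sigma$-field generated by the forward particles at time $s-1$ and the backward particles from $s+1$ to $T$, and Theorem~\ref{thm:Hoeffding-Two-Populations} applied through the two-filter identity, which forces the centred function (your $\Lambda_s\bar h$, the paper's $\bar h_{s|T}$) to have zero mean under $\filt{\Xinit,s-1}\otimes\backDist[][\gamma]{s+1}{T}$. The difference is one of organization rather than substance. The paper never introduces the intermediate target $\post[\Xinit][tar]{s}{T}$ explicitly: it writes the error as a single ratio $a_N/b_N$ with $a_N\eqdef N^{-1}\sum_i\smwght{s}{T}{i}\,(h-\post[\chi]{s}{T}[h])(\smpart{s}{T}{i})$ and $b_N\eqdef N^{-1}\tilde{\Omega}_{s|T}$, and then invokes Lemma~\ref{lem:hoeffding:ratio} twice in a nested fashion: once for $a_N/b_N$, and once more inside, because the conditional expectation of $a_N$ is itself a ratio $a'_N/b'_N$ of two-population estimators --- this conditional expectation is precisely your $\beta^N\bigl(\post[\Xinit][tar]{s}{T}[h]-\post[\chi]{s}{T}[h]\bigr)$, so your ``second term'' and the paper's inner ratio are the same object. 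What Lemma~\ref{lem:hoeffding:ratio} buys is exactly the bookkeeping you flag as the delicate point: its hypotheses $|a_N/b_N|\le M$ and $b\ge\beta>0$ are the mechanism that absorbs both the small-denominator events and the irrelevant regime $\epsilon>\oscnorm{h}$ into the final exponential rate, so the good-event construction and union bound that you carry out by hand are done once and for all inside that lemma. Conversely, your route makes the statistical structure more transparent --- conditional unbiasedness of the weighted draws with respect to the random target $\post[\Xinit][tar]{s}{T}$, and the lower bound $\beta^N\ge\filt{\Xinit,s-1}^N\otimes\backDist[][\gamma]{s+1}{T}^N[\Lambda_s\1]\,/\,\esssup{\adjfunc[smooth]{s}{T}{}\odot\gamma_{s+1}^{-1}}$, both of which are correct under A\ref{assum:bound-likelihood} and A\ref{assum:borne-TwoFilters} --- at the cost of reproving the ratio lemma inline. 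Estimate for estimate, the two proofs coincide.
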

\begin{proof}
The proof is postponed to Section~\ref{proof:thm:Hoeffding-Fearnhead}.
\end{proof}

Using Theorem~\ref{thm:Hoeffding-Two-Populations} and Lemma~\ref{lem:hoeffding:ratio}, we may derive an exponential inequality for the weighted samples $\{ (\epart{s}{i},\smwght{s}{T}{i, \mathrm{f}}) \}_{i=1}^N$ and $ \{ (\ebackpart{s}{T}{i},\smwght{s}{T}{i, \mathrm{b}}) \}_{i=1}^N$ produced by the $\bdm$ algorithm of  \cite{briers:doucet:maskell:2010}, where $\smwght{s}{T}{i, \mathrm{f}}$  and $\smwght{s}{T}{i, \mathrm{b}}$ are defined in \eqref{eq:definition-smoothing-weight-forward} and \eqref{eq:definition-smoothing-weight-backward}. Therefore, both the forward and the backward particle approximations of the smoothing distribution converge to the marginal smoothing distribution, and these two approximations satisfy an exponential inequality.
\begin{thm}[deviation inequality for the $\bdm$ algorithm of  \cite{briers:doucet:maskell:2010}]
\label{th:exp:forward-backward}
Assume that A\ref{assum:bound-likelihood}, A\ref{assum:borne-FFBS} and A\ref{assum:borne-TwoFilters} hold for some $T<\infty$. Then, for all $1\le s\le T-1$, there exist $0 < B_{s|T},C_{s|T} <\infty$ such that for all $N\ge 1$, $\varepsilon >0$ and all $h \in \functionset[b]{X}$,
\begin{align}
\label{eq:Exponential-Inequality-Forward}
&\PP\left( \left| \sum_{i=1}^N \frac{\smwght{s}{T}{i, \mathrm{f}}}{\tilde{\Omega}^{\mathrm{f}}_{s|T}} h(\epart{s}{i}) - \post[\Xinit]{s}{T} [h] \right| > \epsilon \right) \leq  B_{s|T} \rme^{-C_{s|T} N \epsilon^2 / \oscnorm[2]{h}} \eqsp, \\
\label{eq:Exponential-Inequality-Backward}
&\PP\left( \left| \sum_{i=1}^N \frac{\smwght{s}{T}{i, \mathrm{b}}}{\tilde{\Omega}^{\mathrm{b}}_{s|T}} h(\ebackpart{s}{T}{i}) - \post[\Xinit]{s}{T} [h] \right| > \epsilon \right) \leq  B_{s|T} \rme^{-C_{s|T} N \epsilon^2 / \oscnorm[2]{h}} \eqsp.
\end{align}
\end{thm}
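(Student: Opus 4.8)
The plan is to write each of the two estimators as a ratio of two double sums of the type controlled by Theorem~\ref{thm:Hoeffding-Two-Populations}, and then to combine the deviation inequalities for numerator and denominator via Lemma~\ref{lem:hoeffding:ratio}. Consider first the forward weights \eqref{eq:definition-smoothing-weight-forward}. Since the empirical filter $\{(\ewght{s}{i}/\sumwght{s},\epart{s}{i})\}_{i=1}^N$ already carries the local likelihood $g_s$ and the transition from time $s-1$, folding the integration over $x_{s-1}$ in \eqref{eq:filtBackfilt} into $\filt{\Xinit,s}$ gives
\[
\post[\Xinit]{s}{T}[h] = \frac{\filt{\Xinit,s}\otimes\backDist[][\gamma]{s+1}{T}[H_h]}{\filt{\Xinit,s}\otimes\backDist[][\gamma]{s+1}{T}[H_1]}\eqsp,\qquad H_h(x,x')\eqdef \frac{q(x,x')}{\gamma_{s+1}(x')}\,h(x)\eqsp,
\]
with $H_1 = q\odot\gamma_{s+1}^{-1}$. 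Dividing the numerator and denominator of \eqref{eq:definition-smoothing-weight-forward} by $\sumwght{s}\backsumwght{s+1}{T}$, the forward estimator is exactly
\[
\sum_{i=1}^N \frac{\smwght{s}{T}{i, \mathrm{f}}}{\tilde{\Omega}^{\mathrm{f}}_{s|T}} h(\epart{s}{i}) = \frac{\sum_{i,j}(\ewght{s}{i}/\sumwght{s})(\ebackwght{s+1}{T}{j}/\backsumwght{s+1}{T})\,H_h(\epart{s}{i},\ebackpart{s+1}{T}{j})}{\sum_{i,j}(\ewght{s}{i}/\sumwght{s})(\ebackwght{s+1}{T}{j}/\backsumwght{s+1}{T})\,H_1(\epart{s}{i},\ebackpart{s+1}{T}{j})}\eqsp,
\]
so numerator and denominator are the empirical counterparts of $\filt{\Xinit,s}\otimes\backDist[][\gamma]{s+1}{T}[H_h]$ and $\filt{\Xinit,s}\otimes\backDist[][\gamma]{s+1}{T}[H_1]$.

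Both $H_h$ and $H_1$ lie in $\mathbb{F}_b(\Xset\times\Xset,\sigmaX\otimes\sigmaX)$: under A\ref{assum:bound-likelihood} and A\ref{assum:borne-TwoFilters} the kernel ratio $q\odot\gamma_{s+1}^{-1}$ is bounded, and $h$ is bounded. Since $1\le s\le T-1$ implies $0\le s<s+1\le T$, Theorem~\ref{thm:Hoeffding-Two-Populations} applies to both functions and yields exponential inequalities for the numerator about $\filt{\Xinit,s}\otimes\backDist[][\gamma]{s+1}{T}[H_h]$ and for the denominator about $\filt{\Xinit,s}\otimes\backDist[][\gamma]{s+1}{T}[H_1]$. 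To obtain the stated dependence on $\oscnorm[2]{h}$ rather than on $|h|_\infty$, I would first note that the ratio above, as well as $\post[\Xinit]{s}{T}[h]$, is invariant under $h\mapsto h+c$; hence one may assume $h$ centred so that $|h|_\infty\le\oscnorm{h}/2$. Then $\oscnorm{H_h}\le 2|H_h|_\infty\le |q\odot\gamma_{s+1}^{-1}|_\infty\,\oscnorm{h}$, so the exponent produced by Theorem~\ref{thm:Hoeffding-Two-Populations} for the numerator becomes $\propto N\epsilon^2/\oscnorm[2]{h}$ once $|q\odot\gamma_{s+1}^{-1}|_\infty^2$ is absorbed into the constant.

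It remains to pass from the numerator and denominator bounds to a bound on their ratio. Writing $A_N,B_N$ for the empirical numerator and denominator and $a=\filt{\Xinit,s}\otimes\backDist[][\gamma]{s+1}{T}[H_h]$, $b=\filt{\Xinit,s}\otimes\backDist[][\gamma]{s+1}{T}[H_1]>0$, the identity $A_N/B_N-a/b = B_N^{-1}\{(A_N-a)-(a/b)(B_N-b)\}$ shows that on $\{|B_N-b|\le b/2\}$ the event $|A_N/B_N-a/b|>\epsilon$ forces either $|A_N-a|$ or $|B_N-b|$ to exceed a multiple of $b\epsilon$; a union bound over these events together with the bound on $\{|B_N-b|>b/2\}$ is precisely the content of Lemma~\ref{lem:hoeffding:ratio} and delivers \eqref{eq:Exponential-Inequality-Forward} with suitable $B_{s|T},C_{s|T}$. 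The backward inequality \eqref{eq:Exponential-Inequality-Backward} follows identically after folding the integration over $x_{s+1}$ in \eqref{eq:filtBackfilt} into the backward filter $\backDist[][\gamma]{s}{T}$, which gives $\post[\Xinit]{s}{T}[h]=\{\filt{\Xinit,s-1}\otimes\backDist[][\gamma]{s}{T}[\tilde H_h]\}/\{\filt{\Xinit,s-1}\otimes\backDist[][\gamma]{s}{T}[\tilde H_1]\}$ with $\tilde H_h(x,x')=q(x,x')\gamma_s^{-1}(x')h(x')$ and $\tilde H_1=q\odot\gamma_s^{-1}$, so that Theorem~\ref{thm:Hoeffding-Two-Populations} applies with the index pair $(s-1,s)$.

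The main obstacle I anticipate is the oscillation bookkeeping: the functions $H_h$ and $\tilde H_h$ carry the factor $q/\gamma$, so the linear-in-$\oscnorm{h}$ control of $\oscnorm{H_h}$ is only available after the centring step, and one must keep track of how the $\epsilon$-thresholds feeding into Lemma~\ref{lem:hoeffding:ratio} rescale (by factors depending on $a$, $b$, and $|q\odot\gamma^{-1}|_\infty$) so that the final exponent is genuinely $\propto N\epsilon^2/\oscnorm[2]{h}$. One must also check that $b>0$, guaranteed by positivity of $g_s$ and of $q$ on the relevant support, so that the denominator is bounded away from zero with overwhelming probability, and verify that A\ref{assum:borne-TwoFilters} indeed supplies boundedness of $q\odot\gamma_s^{-1}$ over the full range of $s$ required by the backward case.
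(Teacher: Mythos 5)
Your proposal is correct and follows essentially the same route as the paper: you rewrite each estimator as a ratio of empirical product-measure averages of $\fup{h}{s}$ (your $H_h$) and $\fup{\1}{s}$ (your $H_1$) -- exactly the paper's decompositions \eqref{eq:forward-approximation} and \eqref{eq:backward-approximation} -- then apply Theorem~\ref{thm:Hoeffding-Two-Populations} to the centred numerator and to the denominator and combine them via Lemma~\ref{lem:hoeffding:ratio}, treating the backward case symmetrically with the index pair $(s-1,s)$. In fact your numerator $(A_N-a)-(a/b)(B_N-b)=A_N-(a/b)B_N$ coincides algebraically with the paper's $a_N^{i,\mathrm{f}}$, so the two arguments are the same up to the bookkeeping of where the centring of $h$ is performed.
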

\begin{proof}
The proof is postponed to Section~\ref{proof:th:exp:forward-backward}.
\end{proof}

\begin{rem}
Following \cite{olsson:westerborn:2015,douc:garivier:moulines:olsson:2011,dubarry:lecorff:2013}, time uniform exponential inequalities for the two-filter approximations of the marginal smoothing distributions may be obtained using {\em strong mixing} assumptions which are standard in the SMC literature:

\begin{assumptionmix}
\label{assum:mix}
There exist $0<\sigma_- < \sigma_+<\infty$ and $c_->0$ such that for all $x,x'\in\Xset$, $\sigma_- \le q(x,x')\le \sigma_+$ and for all $t\ge 0$,  \[
\int \Xinit (\rmd x_0)g_0(x_0)\ge c_-\quad\mbox{and}\quad\inf_{x\in\Xset}\int Q(x,\rmd x')g_t(x')\ge c_-\eqsp.
\]
\end{assumptionmix}

\begin{assumptionmix}
\label{assum:mix:gamma}
There exist $0<\gamma_- < \gamma_+<\infty$ and $\check{c}_->0$ such that for all $x\in\Xset$ and all $t\ge0$, $\gamma_- \le \gamma_t(x)\le \gamma_+$ and for all $t\ge 0$,
\[
\int \gamma_T(x_T)g_T(x_T)\rmd x_T\ge \check{c}_- \quad\mbox{and}\quad\inf_{x\in\Xset}\int \gamma_t(x_t)g_t(x_t)q(x_t,x)\gamma_{t+1}^{-1}(x)\rmd x_t\ge \check{c}_-\eqsp.
\]
\end{assumptionmix}

\begin{enumerate}[(i)]
\item \label{it:exp:mix:for}If A\ref{assum:bound-likelihood} and A\ref{assum:borne-FFBS} hold uniformly in $T$ and if H\ref{assum:mix} holds, then, it is proved in \cite{douc:garivier:moulines:olsson:2011} that Proposition~\ref{prop:exponential-inequality-forward} holds with constants that are uniform in time : there exist $0 <B, C <\infty$ such that for all $s \geq 0$, $N > 0$, $\epsilon > 0$ and all $h \in \functionset[b]{X}$,
\begin{equation*}
\PP\left( \left| \sumwght{s}^{-1 } \sum_{i=1}^N \swght{s}{i} h(\epartpred{s}{i}) - \filt{\Xinit,s}[h] \right| \geq \epsilon \right) \leq B \rme^{-C N \epsilon^2 / \oscnorm{h}^2}\eqsp.
\end{equation*}
\item \label{it:exp:mix:back} It can be shown following the exact same steps that if A\ref{assum:bound-likelihood} and A\ref{assum:borne-TwoFilters} hold uniformly in $T$ and if H\ref{assum:mix} and H\ref{assum:mix:gamma} hold then Proposition~\ref{prop:exponential-inequality-backward} holds with constants that are uniform in time: there exist $0<B, C<\infty$  such that for all $t\geq 0$, $N\ge 1$, $\epsilon > 0$, and all $h \in \functionset[b]{X}$,
\[
\PP\left[ \left| \backsumwght{t}{T}^{-1} \sum_{i=1}^N \ebackwght{t}{T}{i} h(\ebackpart{t}{T}{i}) - \backDist[][\gamma]{t}{T}[h] \right| \geq \epsilon \right] \leq B\rme^{-C N \epsilon^2 / \oscnorm{h}^2}  \eqsp.
\]
\item \label{it:exp:mix:twofilt} Therefore, if A\ref{assum:bound-likelihood}, A\ref{assum:borne-FFBS} and A\ref{assum:borne-TwoFilters} hold uniformly in $T$ and if H\ref{assum:mix} and H\ref{assum:mix:gamma} hold, then Theorem~\ref{thm:Hoeffding-Two-Populations} holds with constants that are uniform in time. As a direct consequence, Theorems~\ref{thm:Hoeffding-Fearnhead} and~\ref{th:exp:forward-backward} hold also with constants that are uniform in time.
\end{enumerate}
\end{rem}

\section{Asymptotic normality of the two-filter algorithms}
\label{sec:CLTTwoFilters}
We now establish CLT for the two-filter algorithms. Note first that under assumptions A\ref{assum:bound-likelihood}, A\ref{assum:borne-FFBS} and A\ref{assum:borne-TwoFilters}, for all $0\le s,t\le T$ a CLT may be derived for the weighted samples $\{(\epart{s}{\ell},\ewght{s}{\ell})\}_{\ell=1}^N$ and $\{ (\ebackpart{t}{T}{i},\ebackwght{t}{T}{i}) \}_{i=1}^N$ which target respectively the filtering distribution $\filt{\Xinit,s}$ and the backward information filter $\backDist[][\gamma]{t}{T}$. By Propositions~\ref{prop:CLT-forward} and~\ref{prop:CLT-backward}, there exist $\asymVar[\Xinit]{s}{}$ and $\backasymVar[\gamma]{t}{T}{}$ such that for any $h \in \functionset[b]{X}$,
\begin{align}
N^{1/2} \sum_{i=1}^N \frac{\ewght{s}{i}}{\sumwght{s}} \left(h(\epart{s}{i}) - \filt{\Xinit,s}[h]\right) &\dlim_{N \to \infty} \mathcal{N}\left(0,\asymVar[\Xinit]{s}{h - \filt{\Xinit,s}[h]}\right)\eqsp,\label{eq:CLT:forward}\\
N^{1/2} \sum_{j=1}^N \frac{\ebackwght{t}{T}{j}}{\backsumwght{t}{T}} \left(h(\ebackpart{t}{T}{j})- \backDist[][\gamma]{t}{T}[h]\right) &\dlim_{N \to \infty} \mathcal{N}\left(0,\backasymVar[\gamma]{t}{T}{h-\backDist[][\gamma]{t}{T}[h]}\right)\label{eq:CLT:backward}\eqsp.
\end{align}
Theorem~\ref{thm:CLT-Two-Populations} establishes a CLT for the weighted sample $\{ \swght{s}{i}\ebackwght{t}{T}{j}, (\epartpred{s}{i},\ebackpart{t}{T}{j})\}_{i,j=1}^N$ which targets the product distribution $\filt{\Xinit,s}\otimes \backDist[][\gamma]{t}{T}$. As an important consequence, the asymptotic variance of the weighted sample $\{\swght{s}{i}\ebackwght{t}{T}{j}, (\epartpred{s}{i},\ebackpart{t}{T}{j})\}_{i,j=1}^N$ is the sum of two contributions, the first one involves $\asymVar[\Xinit]{s}{}$ and the second one $\backasymVar[\gamma]{t}{T}{}$. Intuitively, this may be explained by the fact that the estimator $\filt{\Xinit,s}^N\otimes\backDist[][\gamma]{t}{T}^N[h]$ is obtained by mixing two independent weighted samples which suggests the following decomposition:
\begin{multline*}
\sqrt{N}\sum_{i,j=1}^N \frac{\swght{s}{i}}{\sumwght{s}}\frac{\ebackwght{t}{T}{j}}{\backsumwght{t}{T}} \tilde{h}_{s,t}(\epartpred{s}{i},\ebackpart{t}{T}{j}) = \sqrt{N}\sum_{j=1}^N \frac{\ebackwght{t}{T}{j}}{\backsumwght{t}{T}} \filt{\Xinit,s}[\tilde{h}_{s,t}(\cdot,\ebackpart{t}{T}{j})]\\
+ \sqrt{N}\sum_{i=1}^N \frac{\swght{s}{i}}{\sumwght{s}} \backDist[][\gamma]{t}{T}[\tilde{h}_{s,t}(\epartpred{s}{i},\cdot)] + \mathcal{E}^{N}_{s,T|t}(\tilde{h}_{s,t}) \eqsp,
\end{multline*}
where $\tilde{h}_{s,t} = h - \filt{\Xinit,s}\otimes\backDist[][\gamma]{t}{T}[h]$ and
\[
\mathcal{E}^{N}_{s,T|t}(h) \eqdef \sqrt{N}\sum_{i,j=1}^N \frac{\swght{s}{i}}{\sumwght{s}}\frac{\ebackwght{t}{T}{j}}{\backsumwght{t}{T}}\left\{h(\epartpred{s}{i},\ebackpart{t}{T}{j}) -  \filt{\Xinit,s}[h(\cdot,\ebackpart{t}{T}{j})]  - \backDist[][\gamma]{t}{T}[h(\epartpred{s}{i},\cdot)]\right\}\eqsp.
\]
A CLT for the two independent first terms is obtained by \eqref{eq:CLT:forward} and \eqref{eq:CLT:backward}. It remains then to prove that $\mathcal{E}^{N}_{s,T|t}(h)$ converges in probability to $0$. However, this cannot be obtained directly from the exponential deviation inequality derived in Theorem~\ref{thm:Hoeffding-Two-Populations} and requires sharper controls of the smoothing error (for instance nonasymptotic $\mathrm{L}^p$-mean error bounds).
Theorem~\ref{thm:CLT-Two-Populations}  provides a direct proof following the asymptotic theory of weighted system of particles developed in \cite{douc:moulines:2008}.
\begin{thm}
\label{thm:CLT-Two-Populations}
Assume that A\ref{assum:bound-likelihood}, A\ref{assum:borne-FFBS} and A\ref{assum:borne-TwoFilters} hold for some $T<\infty$. Then,  for all $0 \leq s<t \leq T$ and all $h \in \functionset[b]{X\times X}$,
\begin{multline*}
\sqrt{N}\left(\sum_{i,j=1}^N \frac{\swght{s}{i}}{\sumwght{s}}\frac{\ebackwght{t}{T}{j}}{\backsumwght{t}{T}} h(\epartpred{s}{i},\ebackpart{t}{T}{j}) - \filt{\Xinit,s}\otimes \backDist[][\gamma]{t}{T}[h]
\right) \\
\dlim_{N \to \infty} \mathcal{N}\left(0,\asymVarJoint{s,t}{T}{h- \filt{\Xinit,s}\otimes \backDist[][\gamma]{t}{T}[h]}\right) \eqsp,
\end{multline*}
where $\asymVarJoint{s,t}{T}{h}$ is defined by:
\begin{equation}
\label{eq:asymVarjoint}
\asymVarJoint{s,t}{T}{h} \eqdef \asymVar[\Xinit]{s}{\int \backDist[][\gamma]{t}{T}(\rmd x_t)h(\cdot,x_t)} +  \backasymVar[\gamma]{t}{T}{\int \filt{\Xinit,s}(\rmd x_{s}) h(x_s,\cdot)}\eqsp,
\end{equation}
with $\asymVar[\Xinit]{s}{}$ and $\backasymVar[\gamma]{t}{T}{}$ are given in Proposition~\ref{prop:CLT-forward} and Proposition~\ref{prop:CLT-backward}.
\end{thm}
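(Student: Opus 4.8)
The plan is to build on the decomposition of the normalised error introduced just before the statement, which I write as $A^N+B^N+\mathcal{E}^{N}_{s,T|t}(\tilde{h}_{s,t})$, where $\tilde{h}_{s,t}=h-\filt{\Xinit,s}\otimes\backDist[][\gamma]{t}{T}[h]$, the \emph{backward term} $A^N=\sqrt{N}\sum_{j}(\ebackwght{t}{T}{j}/\backsumwght{t}{T})\,\filt{\Xinit,s}[\tilde{h}_{s,t}(\cdot,\ebackpart{t}{T}{j})]$ depends only on the backward sample, and the \emph{forward term} $B^N=\sqrt{N}\sum_{i}(\swght{s}{i}/\sumwght{s})\,\backDist[][\gamma]{t}{T}[\tilde{h}_{s,t}(\epartpred{s}{i},\cdot)]$ only on the forward sample. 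I would show that $A^N+B^N$ converges to the announced Gaussian, that the cross term $\mathcal{E}^{N}_{s,T|t}(\tilde{h}_{s,t})$ vanishes in probability, and conclude with Slutsky's lemma.

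The key point for the two marginal terms is that each is a self-normalised average of a \emph{fixed}, bounded, centred function. With $\psi_{\mathrm b}(x_t)\eqdef\int\filt{\Xinit,s}(\rmd x_s)\tilde{h}_{s,t}(x_s,x_t)$ one has $\backDist[][\gamma]{t}{T}[\psi_{\mathrm b}]=\filt{\Xinit,s}\otimes\backDist[][\gamma]{t}{T}[\tilde{h}_{s,t}]=0$, so that $A^N=\sqrt{N}\,(\backDist[][\gamma]{t}{T}^N-\backDist[][\gamma]{t}{T})[\psi_{\mathrm b}]$ and \eqref{eq:CLT:backward} gives $A^N\dlim\mathcal{N}(0,\backasymVar[\gamma]{t}{T}{\psi_{\mathrm b}})$; symmetrically, writing $\psi_{\mathrm f}(x_s)\eqdef\int\backDist[][\gamma]{t}{T}(\rmd x_t)\tilde{h}_{s,t}(x_s,x_t)$ with $\filt{\Xinit,s}[\psi_{\mathrm f}]=0$, \eqref{eq:CLT:forward} gives $B^N\dlim\mathcal{N}(0,\asymVar[\Xinit]{s}{\psi_{\mathrm f}})$. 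Since $A^N$ is a measurable function of the backward sample alone, $B^N$ of the forward sample alone, and the two samples are independent by construction, the pair $(A^N,B^N)$ converges jointly to a pair of \emph{independent} Gaussians; adding them yields $A^N+B^N\dlim\mathcal{N}(0,\backasymVar[\gamma]{t}{T}{\psi_{\mathrm b}}+\asymVar[\Xinit]{s}{\psi_{\mathrm f}})=\mathcal{N}(0,\asymVarJoint{s,t}{T}{\tilde{h}_{s,t}})$ by \eqref{eq:asymVarjoint}.

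The crux is to prove $\mathcal{E}^{N}_{s,T|t}(\tilde{h}_{s,t})\plim0$. Writing $\mu^N=\filt{\Xinit,s}^N$, $\nu^N=\backDist[][\gamma]{t}{T}^N$ and $\mu,\nu$ for their targets, a rearrangement that uses $\filt{\Xinit,s}\otimes\backDist[][\gamma]{t}{T}[\tilde{h}_{s,t}]=0$ identifies the cross term as $\sqrt{N}$ times the product of the two centred fluctuation measures, $\mathcal{E}^{N}_{s,T|t}(\tilde{h}_{s,t})=\sqrt{N}\,(\mu^N-\mu)\otimes(\nu^N-\nu)[\tilde{h}_{s,t}]$. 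Heuristically each fluctuation is of order $N^{-1/2}$, so their product is of order $N^{-1}$ and the whole term is $O_{\mathbb{P}}(N^{-1/2})$. The main obstacle is to justify this for a \emph{merely bounded measurable} $h$: conditioning on one sample and applying a deviation bound to the other fails, because conditionally on the forward sample the function $x_t\mapsto(\mu^N-\mu)[\tilde{h}_{s,t}(\cdot,x_t)]$ still has oscillation of order one (no regularity in the second argument is assumed), so one of the two factors $N^{-1/2}$ is lost; this is exactly why the exponential inequality of Theorem~\ref{thm:Hoeffding-Two-Populations} applied to the product sample is not sufficient.

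I would instead bound the second moment by exploiting independence to \emph{factorise} it. Integrating out the forward sample first gives $\PE[(\mathcal{E}^{N}_{s,T|t}(\tilde{h}_{s,t}))^2]=N\,\PE[(\nu^N-\nu)\otimes(\nu^N-\nu)[G_N]]$, where $G_N(x_t,x_t')=\PE[(\mu^N-\mu)[\tilde{h}_{s,t}(\cdot,x_t)]\,(\mu^N-\mu)[\tilde{h}_{s,t}(\cdot,x_t')]]$. The weighted-sample theory of \cite{douc:moulines:2008} furnishes the two second-moment bounds needed, both uniform over bounded test functions and hence over the positions $x_t,x_t'$: the forward bound $\PE[((\mu^N-\mu)[f])^2]\le C_{\mathrm f}\,\oscnorm[2]{f}/N$, which by Cauchy--Schwarz yields $|G_N|_{\infty}\le C_{\mathrm f}\,\oscnorm[2]{\tilde{h}_{s,t}}/N$ (the uniformity being free, since $\oscnorm{\tilde{h}_{s,t}(\cdot,x_t)}\le\oscnorm{\tilde{h}_{s,t}}$), and the backward bilinear bound $|\PE[(\nu^N-\nu)\otimes(\nu^N-\nu)[G]]|\le C_{\mathrm b}\,|G|_{\infty}/N$ valid for any bounded kernel $G$. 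Combining them, $\PE[(\mathcal{E}^{N}_{s,T|t}(\tilde{h}_{s,t}))^2]\le C_{\mathrm f}C_{\mathrm b}\,\oscnorm[2]{\tilde{h}_{s,t}}/N\to0$, hence $\mathcal{E}^{N}_{s,T|t}(\tilde{h}_{s,t})\plim0$, and Slutsky's lemma combines this with the limit of $A^N+B^N$ to give the stated CLT. The genuinely delicate point, as stressed, is that both factors $N^{-1/2}$ must be extracted \emph{simultaneously} from the factorised second moment rather than by conditioning and controlling one sample uniformly in the location of the other.
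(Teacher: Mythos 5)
Your reduction is correct as far as it goes, and it is precisely the route the paper flags before the theorem and then declines to take. Writing, as you do, $\mu^N=\filt{\Xinit,s}^N$, $\nu^N=\backDist[][\gamma]{t}{T}^N$ and $\mu,\nu$ for their targets: the decomposition $A^N+B^N+\mathcal{E}^{N}_{s,T|t}(\tilde{h}_{s,t})$, the identity $\mathcal{E}^{N}_{s,T|t}(\tilde{h}_{s,t})=\sqrt{N}\,(\mu^N-\mu)\otimes(\nu^N-\nu)[\tilde{h}_{s,t}]$ (valid because $\mu\otimes\nu[\tilde{h}_{s,t}]=0$), the joint convergence of the independent pair $(A^N,B^N)$ to independent Gaussians, and the factorised second moment $\PE[(\mathcal{E}^{N}_{s,T|t}(\tilde{h}_{s,t}))^2]=N\,\PE[(\nu^N-\nu)\otimes(\nu^N-\nu)[G_N]]$ are all sound. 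Your forward bound (a) is also available, though not from \cite{douc:moulines:2008}: it follows by integrating the tail of the exponential inequality of Proposition~\ref{prop:exponential-inequality-forward}, exactly as the paper does in \eqref{eq:L1fromExp}. By contrast, the paper's own proof avoids this decomposition entirely: it proceeds by induction on $s$, conditioning on the backward sample and the previously drawn forward particles, and applies the triangular-array invariance principle of \cite[Theorem~A.3]{douc:moulines:2008}.

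The gap is your backward bilinear bound (b): $\left|\PE\left[(\nu^N-\nu)\otimes(\nu^N-\nu)[G]\right]\right|\le C_{\mathrm{b}}\,|G|_{\infty}/N$ for an \emph{arbitrary} bounded kernel $G$. This is not in \cite{douc:moulines:2008}, which supplies consistency and CLTs for weighted samples but no nonasymptotic moment bounds of this form; it is not established in the paper; and it does not follow from what the paper provides. Proposition~\ref{prop:exponential-inequality-backward} (and the $\lp$ bounds obtained by integrating its tail) controls $(\nu^N-\nu)[f]$ for a \emph{fixed} test function, i.e.\ essentially the diagonal case $G=f\otimes f$; bound (b) for general $G$ is strictly stronger, since it encodes the $O(1/N)$ decorrelation between \emph{distinct} particles after centring --- a propagation-of-chaos property. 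For an i.i.d.\ sample (b) is immediate (the off-diagonal terms vanish), but $\nu^N$ is a self-normalised, recursively built auxiliary particle system: the time-$t$ particles are conditionally i.i.d.\ only given the time-$(t+1)$ particles, and their common conditional sampling law is itself random and biased with respect to $\nu$. Establishing (b) therefore requires its own induction over the backward time index, handling both the conditional sampling step and the propagated error, i.e.\ an argument of comparable length and delicacy to the paper's proof of the theorem itself --- this is exactly the ``sharper control of the smoothing error'' the paper alludes to. The bound is genuinely load-bearing: with only the trivial estimate $\left|(\nu^N-\nu)\otimes(\nu^N-\nu)[G_N]\right|\le 4|G_N|_{\infty}$ combined with your $|G_N|_{\infty}\le C_{\mathrm{f}}\oscnorm[2]{\tilde{h}_{s,t}}/N$, you only obtain $\PE[(\mathcal{E}^{N}_{s,T|t}(\tilde{h}_{s,t}))^2]=O(1)$, i.e.\ tightness of the cross term, not $\mathcal{E}^{N}_{s,T|t}(\tilde{h}_{s,t})\plim_{N\to\infty}0$. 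Until (b) is proved, the proof is incomplete at its crucial step.
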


\begin{proof}
The proof is postponed to Section~\ref{proof:thm:CLT-Two-Populations}.
\end{proof}
Define
\begin{align*}
\sigma_s &\eqdef \filt{\Xinit,s-1} \otimes \backDist[][\gamma]{s+1}{T} \left[ \int  q^{[2]}(\cdot,x) g_s(x) \rmd x \odot \fakeprior_{s+1}^{-1} \right] \eqsp, \\
\Sigma_s[h] &\eqdef \asymVarJoint{s-1,s+1}{T}{\int q^{[2]}(\cdot;x) g_s(x)  h(x) \rmd x \odot \fakeprior_{s+1}^{-1}} \eqsp.
\end{align*}
Theorem~\ref{thm:CLT-Fearnhead-smoother} provides a CLT for the $\fwt$ algorithm of \cite{fearnhead:wyncoll:tawn:2010}
\begin{thm}[CLT for the $\fwt$ algorithm of \cite{fearnhead:wyncoll:tawn:2010}]
\label{thm:CLT-Fearnhead-smoother}
Assume that A\ref{assum:bound-likelihood}, A\ref{assum:borne-FFBS} and A\ref{assum:borne-TwoFilters} hold for some $T<\infty$. Then,  for all $1\le s \le T-1$ and all $h \in \functionset[b]{X}$,
\begin{equation*}
\sqrt{N}\left( \sum_{i=1}^N \frac{\smwght{s}{T}{i}}{\tilde{\Omega}_{s|T}} h(\smpart{s}{T}{i}) - \post[\Xinit]{s}{T}[h]
\right)
\dlim_{N \to \infty} \mathcal{N}\left(0,\asymVarFearnhead[\Xinit]{s}{T}{h - \post[\Xinit]{s}{T}[h]}\right) \eqsp.
\end{equation*}
where
\begin{multline}
\label{eqdef:asymvarfearnhead} 
\asymVarFearnhead[\Xinit]{s}{T}{h}= \sigma_s^{-2} \left\{ \Sigma_s[h] + \filt{\Xinit,s-1} \otimes \backDist[][\gamma]{s+1}{T} \left[\adjfunc[smooth]{s}{T}{} \odot \fakeprior^{-1}_{s+1} \right] \phantom{\int} \right. \\  \times \left.  \filt{\Xinit,s-1} \otimes \backDist[][\gamma]{s+1}{T}\left[ \int \smwghtfunc{s}{T}(\cdot;x) q^{[2]}(\cdot,x) g_s(x) h^2(x) \rmd x \odot \fakeprior^{-1}_{s+1} \right]\right\} \eqsp.
\end{multline}
\end{thm}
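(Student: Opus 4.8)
The plan is to view the $\fwt$ update as a single auxiliary sampling--resampling step applied to the product weighted sample studied in Theorem~\ref{thm:CLT-Two-Populations}, and to analyze it following the theory of weighted particle systems of \cite{douc:moulines:2008}. Write $\bar h \eqdef h - \post[\Xinit]{s}{T}[h]$ and let $\mathcal{G}^N$ be the $\sigma$-field generated by the forward sample $\{(\epart{s-1}{i},\ewght{s-1}{i})\}_{i=1}^N$ and the backward sample $\{(\ebackpart{s+1}{T}{j},\ebackwght{s+1}{T}{j})\}_{j=1}^N$; conditionally on $\mathcal{G}^N$ the draws $(I_s^\ell,\check{I}_s^\ell,\smpart{s}{T}{\ell})$ are \iid\ from \eqref{eq:auxiliary-proposal-smoothing}. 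The statistic of interest is the ratio of $N^{-1/2}\sum_{\ell}\smwght{s}{T}{\ell}\bar h(\smpart{s}{T}{\ell})$ over $N^{-1}\tilde{\Omega}_{s|T}$, and I would treat numerator and denominator separately before combining them by Slutsky's lemma. For the denominator, a direct computation of $\CPE{\smwght{s}{T}{1}}{\mathcal{G}^N}$ from \eqref{eq:auxiliary-proposal-smoothing}--\eqref{eq:weight-fearnhead} shows that the normalizing constant of \eqref{eq:auxiliary-proposal-smoothing} cancels against both $\adjfunc[smooth]{s}{T}{}$ and $\kiss[smooth]{s}{T}$, so that $N^{-1}\tilde{\Omega}_{s|T}$ equals a ratio of two averages against the product sample; by the law of large numbers underlying Theorem~\ref{thm:Hoeffding-Two-Populations} it converges in probability to $\sigma_s/\bar\vartheta_s$, where $\bar\vartheta_s \eqdef \filt{\Xinit,s-1}\otimes\backDist[][\gamma]{s+1}{T}[\adjfunc[smooth]{s}{T}{}\odot\fakeprior^{-1}_{s+1}]$ and $\sigma_s$ is as defined above.

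The core is the numerator, which I would split as $A_N + B_N$, with $A_N \eqdef N^{-1/2}\sum_{\ell}(\smwght{s}{T}{\ell}\bar h(\smpart{s}{T}{\ell}) - \CPE{\smwght{s}{T}{\ell}\bar h(\smpart{s}{T}{\ell})}{\mathcal{G}^N})$ the conditionally centered fluctuation and $B_N \eqdef N^{1/2}\CPE{\smwght{s}{T}{1}\bar h(\smpart{s}{T}{1})}{\mathcal{G}^N}$ the $\mathcal{G}^N$-measurable propagated term. Setting $\Psi(x,x') \eqdef \int q^{[2]}(x,x';z)g_s(z)\bar h(z)\,\rmd z$, the same cancellation as above gives
\[
B_N = \bar\vartheta_s^{-1}(1+o_{\PP}(1))\,N^{1/2}\sum_{i,j=1}^N \frac{\ewght{s-1}{i}}{\sumwght{s-1}}\frac{\ebackwght{s+1}{T}{j}}{\backsumwght{s+1}{T}}\,\Psi\odot\fakeprior^{-1}_{s+1}(\epart{s-1}{i},\ebackpart{s+1}{T}{j})\eqsp.
\]
The decisive algebraic point is that $\filt{\Xinit,s-1}\otimes\backDist[][\gamma]{s+1}{T}[\Psi\odot\fakeprior^{-1}_{s+1}] = 0$: by \eqref{eq:filtBackfilt} this product integral is proportional to $\post[\Xinit]{s}{T}[\bar h]$, which vanishes since $\bar h$ is centered. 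Hence $\Psi\odot\fakeprior^{-1}_{s+1}$ is a centered test function and Theorem~\ref{thm:CLT-Two-Populations} applies directly, so the sum in the display converges in distribution to $\mathcal{N}(0,\asymVarJoint{s-1,s+1}{T}{\Psi\odot\fakeprior^{-1}_{s+1}}) = \mathcal{N}(0,\Sigma_s[\bar h])$, and Slutsky's lemma yields $B_N \dlim \mathcal{N}(0,\bar\vartheta_s^{-2}\Sigma_s[\bar h])$.

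For $A_N$ I would prove a conditional CLT given $\mathcal{G}^N$: it is a normalized sum of conditionally \iid\ centered variables, and the boundedness furnished by A\ref{assum:bound-likelihood}--A\ref{assum:borne-TwoFilters} provides the uniform control needed to verify a conditional Lindeberg condition. Computing $\CPE{(\smwght{s}{T}{1})^2\bar h^2(\smpart{s}{T}{1})}{\mathcal{G}^N}$ and simplifying by means of the definition of $\smwghtfunc{s}{T}$ in A\ref{assum:borne-TwoFilters}, the conditional variance of $A_N$ converges in probability to $\bar\vartheta_s^{-1}\,\filt{\Xinit,s-1}\otimes\backDist[][\gamma]{s+1}{T}[\int\smwghtfunc{s}{T}(\cdot;x)q^{[2]}(\cdot,x)g_s(x)\bar h^2(x)\,\rmd x\odot\fakeprior^{-1}_{s+1}]$. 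I would then combine $A_N$ and $B_N$ through the identity $\PE[\rme^{\rmi u(A_N+B_N)}] = \PE[\rme^{\rmi u B_N}\CPE{\rme^{\rmi u A_N}}{\mathcal{G}^N}]$: the conditional CLT makes $\CPE{\rme^{\rmi u A_N}}{\mathcal{G}^N}$ converge in probability to the Gaussian characteristic function with the above variance, while $B_N$ converges in distribution to its (independent) Gaussian limit, so the numerator is asymptotically Gaussian with variance the sum of the two contributions. Dividing by the denominator limit $\sigma_s/\bar\vartheta_s$ and rearranging reproduces exactly the expression $\asymVarFearnhead[\Xinit]{s}{T}{\bar h}$ of \eqref{eqdef:asymvarfearnhead}, since $\filt{\Xinit,s-1}\otimes\backDist[][\gamma]{s+1}{T}[\adjfunc[smooth]{s}{T}{}\odot\fakeprior^{-1}_{s+1}]=\bar\vartheta_s$.

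The main obstacle is this last combination step: rigorously merging the two asymptotically independent sources of randomness requires the joint-convergence bookkeeping of \cite{douc:moulines:2008}, together with the verification of the conditional Lindeberg condition and the $\mathrm{L}^1$ convergence of the conditional variances, all of which rest on the uniform bounds of A\ref{assum:bound-likelihood}--A\ref{assum:borne-TwoFilters}. A secondary subtlety is ensuring that all the product-sample averages entering the ratios for $A_N$, $B_N$ and the denominator converge simultaneously, which is again guaranteed by the consistency established in Theorem~\ref{thm:Hoeffding-Two-Populations}.
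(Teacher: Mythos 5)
Your proposal is correct and takes essentially the same route as the paper's proof: the same Slutsky split into the numerator and the denominator $N^{-1}\tilde{\Omega}_{s|T}$, the same decomposition of the numerator into a conditionally centered fluctuation plus the propagated conditional-expectation term, the same key observation that $\post[\Xinit]{s}{T}\left[h - \post[\Xinit]{s}{T}[h]\right]=0$ makes the induced test function centered so that Theorem~\ref{thm:CLT-Two-Populations} yields the $\Sigma_s$ contribution, and the same conditional-variance and Lindeberg verifications merged via the triangular-array invariance principle of \cite{douc:moulines:2008}. The only cosmetic difference is that you condition on the full product-sample $\sigma$-field and write out the characteristic-function factorization explicitly, whereas the paper packages the identical argument through the draw-indexed filtration $\mcg{N,i}$ and invokes the cited theorem directly.
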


\begin{proof}
The proof is postponed to Section~\ref{proof:thm:CLT-Fearnhead-smoother}.
\end{proof}

The decompositions \eqref{eq:forward-approximation} and \eqref{eq:backward-approximation} together with Theorem \ref{thm:CLT-Two-Populations} allow to prove a CLT form the forward and the backward
approximations of the marginal smoothing distribution. Theorem~\ref{thm:CLT-Doucet-Forward-Backward-Smoother} is a direct consequence of Proposition~\ref{prop:CLT-backward}, Theorem~\ref{thm:CLT-Two-Populations} and Slutsky Lemma.
\begin{thm}[CLT for the $\bdm$ algorithm of  \cite{briers:doucet:maskell:2010}]
\label{thm:CLT-Doucet-Forward-Backward-Smoother}
Assume that A\ref{assum:bound-likelihood}, A\ref{assum:borne-FFBS} and A\ref{assum:borne-TwoFilters} hold for some $T<\infty$. Then, for all $1 \leq s \leq T-1$ and all $h \in \functionset[b]{X}$,
\begin{equation*}
\sqrt{N}\left( \sum_{i=1}^N \frac{\smwght{s}{T}{i, \mathrm{f}}}{\tilde{\Omega}^{\mathrm{f}}_{s|T}} h(\epart{s}{i}) - \post[\Xinit]{s}{T}[h]
\right) \dlim_{N \to \infty} \mathcal{N}\left(0,\asymVarDoucet[\Xinit]{\forward}{s}{T}{h- \post[\Xinit]{s}{T}[h]} \right)\eqsp,
\end{equation*}
where
\begin{align*}
\asymVarDoucet[\Xinit]{\forward}{s}{T}{h} &\eqdef \asymVarJoint{s,s+1}{T}{H^{\forward}_s} /\{ \filt{\Xinit,s} \otimes \backDist[][\gamma]{s+1}{T}[q \odot \fakeprior_{s+1}^{-1}] \}^2\eqsp,\\
H^{\forward}_s (x,x') &\eqdef h(x) q(x,x') \fakeprior_{s+1}^{-1}(x')\eqsp.
\end{align*}
Similarly,
\begin{equation*}
\sqrt{N}\left( \sum_{i=1}^N \frac{\smwght{s}{T}{i, \mathrm{b}}}{\tilde{\Omega}^{\mathrm{b}}_{s|T}} h(\epart{s}{i}) - \post[\Xinit]{s}{T}[h]
\right)
\dlim_{N \to \infty} \mathcal{N}\left(0, \asymVarDoucet[\Xinit]{\backward}{s}{T}{h- \post[\Xinit]{s}{T}[h]}\right)\eqsp,
\end{equation*}
where
\begin{align*}
\asymVarDoucet[\Xinit]{\backward}{s}{T}{h} &\eqdef \asymVarJoint{s-1,s}{T}{H_s^{\backward} }/\{ \filt{\Xinit,s-1} \otimes \backDist[][\gamma]{s}{T}[q \odot \fakeprior_s^{-1}] \}^2\eqsp,\\
H_s^{\backward} (x,x') &\eqdef  q(x,x') \fakeprior_s^{-1}(x')h(x')\eqsp.
\end{align*}
\end{thm}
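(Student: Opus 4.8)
The plan is to recognise each self-normalised estimator as a \emph{ratio} of two integrals against the single product-weighted sample analysed in Theorem~\ref{thm:CLT-Two-Populations}, and then to conclude by Slutsky's lemma. Starting from the forward weights \eqref{eq:definition-smoothing-weight-forward}, expanding $\smwght{s}{T}{i,\mathrm f}$ and summing against $h$ gives
\[
\sum_{i=1}^N \smwght{s}{T}{i,\mathrm f} h(\epart{s}{i}) = \sum_{i,j=1}^N \ewght{s}{i}\,\ebackwght{s+1}{T}{j}\, H^{\forward}_s(\epart{s}{i},\ebackpart{s+1}{T}{j})\eqsp,
\]
while $\tilde{\Omega}^{\mathrm f}_{s|T}$ is the same double sum with $H^{\forward}_s$ replaced by $q\odot\fakeprior_{s+1}^{-1}$. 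Dividing numerator and denominator by $\sumwght{s}\backsumwght{s+1}{T}$ and abbreviating $\mu_s\eqdef\filt{\Xinit,s}\otimes\backDist[][\gamma]{s+1}{T}$ and by $\mu_s^N$ its particle counterpart $\filt{\Xinit,s}^N\otimes\backDist[][\gamma]{s+1}{T}^N$, the forward estimator becomes exactly $\mu_s^N[H^{\forward}_s]/\mu_s^N[q\odot\fakeprior_{s+1}^{-1}]$, which is the content of decomposition \eqref{eq:forward-approximation}. Combining the filtering recursion with the two-filter formula \eqref{eq:filtBackfilt} shows that the ratio of the corresponding limits equals $\post[\Xinit]{s}{T}[h]$, so the forward estimator is a consistent ratio estimator of the marginal smoothing distribution.

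For the CLT, I would write, with $\tilde h\eqdef h-\post[\Xinit]{s}{T}[h]$ and $B_s\eqdef\mu_s[q\odot\fakeprior_{s+1}^{-1}]$, the standard ratio manipulation
\[
\sqrt N\!\left(\frac{\mu_s^N[H^{\forward}_s]}{\mu_s^N[q\odot\fakeprior_{s+1}^{-1}]}-\post[\Xinit]{s}{T}[h]\right)=\frac{\sqrt N}{\mu_s^N[q\odot\fakeprior_{s+1}^{-1}]}\,(\mu_s^N-\mu_s)\bigl[H^{\forward}_s-\post[\Xinit]{s}{T}[h]\,(q\odot\fakeprior_{s+1}^{-1})\bigr]\eqsp,
\]
which is valid because $\mu_s\bigl[H^{\forward}_s-\post[\Xinit]{s}{T}[h]\,(q\odot\fakeprior_{s+1}^{-1})\bigr]=0$. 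The bracketed function is precisely $H^{\forward}_s$ built on the centered test function $\tilde h$, and by construction it integrates to zero against $\mu_s$. Theorem~\ref{thm:CLT-Two-Populations} applies to this single bounded function and delivers a CLT with variance $\asymVarJoint{s,s+1}{T}{H^{\forward}_s}$ evaluated on $\tilde h$; since $\mu_s^N[q\odot\fakeprior_{s+1}^{-1}]\to B_s$ in probability (consistency of the product sample, itself a consequence of Theorem~\ref{thm:CLT-Two-Populations}), Slutsky's lemma yields the announced limit with variance $\asymVarJoint{s,s+1}{T}{H^{\forward}_s}/B_s^2$ evaluated on $\tilde h$, that is $\asymVarDoucet[\Xinit]{\forward}{s}{T}{\tilde h}$. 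The backward statement follows identically from \eqref{eq:definition-smoothing-weight-backward} and decomposition \eqref{eq:backward-approximation}: the backward estimator equals the ratio of $\filt{\Xinit,s-1}^N\otimes\backDist[][\gamma]{s}{T}^N$ applied to $H_s^{\backward}$ and to $q\odot\fakeprior_s^{-1}$, whose limiting ratio is $\post[\Xinit]{s}{T}[h]$ by \eqref{eq:recurBack} and \eqref{eq:filtBackfilt}, so Theorem~\ref{thm:CLT-Two-Populations} at times $(s-1,s)$ together with Slutsky gives $\asymVarDoucet[\Xinit]{\backward}{s}{T}{\tilde h}$.

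The two algebraic identities rewriting the smoothing weights as ratios of product-sample integrals and the Slutsky step are routine. The main point needing care is verifying that the integrands are admissible for Theorem~\ref{thm:CLT-Two-Populations}, namely that $H^{\forward}_s$, $H_s^{\backward}$, $q\odot\fakeprior_{s+1}^{-1}$ and $q\odot\fakeprior_s^{-1}$ all belong to $\functionset[b]{X\times X}$: this uses $|q|_\infty<\infty$ and $|g_s|_\infty<\infty$ from A\ref{assum:bound-likelihood}, boundedness of $h$, and the bounds $\esssup{q\odot\fakeprior_{s+1}^{-1}}<\infty$ from A\ref{assum:borne-TwoFilters}. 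The second delicate bookkeeping step, and the only genuine obstacle, is to check that the centering constant $\post[\Xinit]{s}{T}[h]$ produced by the ratio decomposition is exactly the shift that makes the bracketed function $\mu_s$-centered, so that the closed forms of $\asymVarDoucet[\Xinit]{\forward}{s}{T}{}$ and $\asymVarDoucet[\Xinit]{\backward}{s}{T}{}$ emerge with the correct $B_s^{-2}$ normalisation; this is where the backward asymptotic variance supplied by Proposition~\ref{prop:CLT-backward} (and the forward one of Proposition~\ref{prop:CLT-forward}), which together constitute $\asymVarJoint{\cdot,\cdot}{T}{\cdot}$ in Theorem~\ref{thm:CLT-Two-Populations}, enter the final variance.
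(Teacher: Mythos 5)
Your proposal is correct and follows essentially the same route as the paper: it uses the ratio decompositions \eqref{eq:forward-approximation} and \eqref{eq:backward-approximation} to write each estimator as $\mu^N[H]/\mu^N[q\odot\gamma^{-1}]$ over the product sample, applies Theorem~\ref{thm:CLT-Two-Populations} to the $\mu$-centered numerator function $H^{\forward}_s$ (resp.\ $H^{\backward}_s$) built on $h-\post[\Xinit]{s}{T}[h]$, and concludes by Slutsky's lemma, exactly the argument the paper invokes when calling the theorem a direct consequence of Theorem~\ref{thm:CLT-Two-Populations}, Proposition~\ref{prop:CLT-backward} and Slutsky. Your write-up in fact supplies the centering identity and the boundedness checks under A\ref{assum:bound-likelihood} and A\ref{assum:borne-TwoFilters} that the paper leaves implicit.
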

Note that $\sigma_s$ and $\Sigma_s[h]$ may be written as:
\[
\sigma_s = \filt{\Xinit,s} \otimes \backDist[][\gamma]{s+1}{T} \left[q\odot \fakeprior_{s+1}^{-1} \right]\times \filt{\Xinit,s-1}\left[ \int  q(\cdot,x) g_s(x) \rmd x\right]
\]
and by Theorem~\ref{thm:CLT-Two-Populations},
\begin{multline*}
\Sigma_s[h] = \asymVar[\Xinit]{s-1}{\int q(\cdot,x)g_s(x)h^1_{s+1}(x)\rmd x} \\
+ \filt{\Xinit,s-1}^2\left[ \int  q(\cdot,x) g_s(x) \rmd x\right]\backasymVar[\gamma]{s+1}{T}{h^2_{s+1}}\eqsp,
\end{multline*}
with $h^1_{s+1}(x)\eqdef h(x)\backDist[][\gamma]{s+1}{T}[q(x,\cdot)\gamma_{s+1}^{-1}]$ and $h^2_{s+1}(x)\eqdef \gamma_{s+1}^{-1}(x)\filt{\Xinit,s}[h(\cdot)q(\cdot,x)]$. In the case where $\kiss[smooth]{s}{T}(x_s,x_{s+1}; x_s) = \kissforward{s}{s}(x_{s-1}, x_s)$ in \eqref{eq:inst-fearnhead:linear} and $\adjfunc[smooth]{s}{T}{x,x'} = \adjfuncforward{s}(x)\adjfunc{s}{T}{x'}$, the smoothing distribution approximation given by the $\fwt$ algorithm is obtained by reweighting the particles obtained in the forward filtering pass and $\asymVarFearnhead[\Xinit]{s}{T}{h}$ may be compared to $\asymVarDoucet[\Xinit]{\forward}{s}{T}{h}$ as both approximations of $\post[\Xinit]{s}{T}[h]$ are based on the same particles (associated with different importance weights). In this case, the two last terms in \eqref{eqdef:asymvarfearnhead} are easily interpreted in the case  $\adjfunc{s}{T}{} = \gamma_{s+1}$:
\[
\filt{\Xinit,s-1} \otimes \backDist[][\gamma]{s+1}{T} \left[\adjfunc[smooth]{s}{T}{} \odot \fakeprior^{-1}_{s+1} \right] = \filt{\Xinit,s-1}[ \adjfuncforward{s}]\backDist[][\gamma]{s+1}{T}[\adjfunc{s}{T}{}\fakeprior^{-1}_{s+1}] = \filt{\Xinit,s-1}[ \adjfuncforward{s}]
\]
and by Jensen's inequality,
\begin{align*}
&\filt{\Xinit,s-1} \otimes \backDist[][\gamma]{s+1}{T}\left[ \int \smwghtfunc{s}{T}(\cdot;x) q^{[2]}(\cdot,x) g_s(x) h^2(x) \rmd x \odot \fakeprior^{-1}_{s+1} \right] \\
&\hspace{.5cm}= \int \filt{\Xinit,s-1}(\rmd x_{s-1})\omega_s(x_{s-1},x)g_s(x)q(x_{s-1},x)\backDist[][\gamma]{s+1}{T}[q^2(x,\cdot)\gamma_{s+1}^{-2}]h^2(x) \rmd x\eqsp,\\
&\hspace{.5cm}\ge \int \filt{\Xinit,s-1}(\rmd x_{s-1})\omega_s(x_{s-1},x)g_s(x)q(x_{s-1},x)(h^1_{s+1}(x))^2 \rmd x\eqsp.
\end{align*}
Therefore, by Proposition~\ref{prop:CLT-backward} and Theorem~\ref{thm:CLT-Doucet-Forward-Backward-Smoother}
\[
\asymVarFearnhead[\Xinit]{s}{T}{h} \ge \frac{\asymVar[\Xinit]{s}{h^1_{s+1}} + \backasymVar[\gamma]{s+1}{T}{h^2_{s+1}}}{\left(\filt{\Xinit,s} \otimes \backDist[][\gamma]{s+1}{T} \left[q\odot \fakeprior_{s+1}^{-1} \right]\right)^2}=\asymVarDoucet[\Xinit]{\forward}{s}{T}{h}\eqsp,
\]
where the last inequality comes from Theorem~\ref{thm:CLT-Two-Populations}. The same inequality holds for $\asymVarDoucet[\Xinit]{\backward}{s}{T}{h}$ when $\kiss[smooth]{s}{T}(x_{s-1},x_{s+1}; x_s) = \kiss{s}{T}(x_{s+1}, x_s)$ in \eqref{eq:inst-fearnhead:linear}.

\begin{rem}
Under the strong mixing assumptions H\ref{assum:mix} and H\ref{assum:mix:gamma}, time uniform bounds for the asymptotic variances of  the two-filter approximations of the marginal smoothing distributions may be obtained.
\begin{enumerate}[(i)]
\item If A\ref{assum:bound-likelihood} and A\ref{assum:borne-FFBS} hold uniformly in $T$ and if H\ref{assum:mix} holds, then it is proved in \cite{douc:garivier:moulines:olsson:2011} that there exists $C>0$ such that for all $s \geq 0$ and all $h \in \functionset[b]{X}$, the asymptotic variance $\asymVar[\Xinit]{s}{h}$ defined in Proposition~\ref{prop:CLT-forward} satisfies:
\[
\asymVar[\Xinit]{s}{h} \le C \esssup{h}^2\eqsp.
\]
\item Following the same steps, if A\ref{assum:bound-likelihood} and A\ref{assum:borne-TwoFilters} hold uniformly in $T$  and if H\ref{assum:mix} and H\ref{assum:mix:gamma} hold, there exists $C>0$ such that for all $0\le t\le T$ and all $h \in \functionset[b]{X}$, the asymptotic variance $\backasymVar{t}{T}{h}$ defined in Proposition~\ref{prop:CLT-backward} satisfies:
\[
\backasymVar[\gamma]{t}{T}{h}\le C \esssup{h}^2\eqsp.
\]
\item As a consequence, if A\ref{assum:bound-likelihood}, A\ref{assum:borne-FFBS} and A\ref{assum:borne-TwoFilters} hold uniformly in $T$  and  if H\ref{assum:mix} and H\ref{assum:mix:gamma} hold, the asymptotic variances $\asymVarJoint{s,t}{T}{h}$, $\asymVarDoucet[\Xinit]{\forward}{s}{T}{h}$, $\asymVarDoucet[\Xinit]{\backward}{s}{T}{h}$ and $\asymVarFearnhead[\Xinit]{s}{T}{h}$ defined in Theorem~\ref{thm:CLT-Two-Populations}, Theorem~\ref{thm:CLT-Fearnhead-smoother}  and Theorem~\ref{thm:CLT-Doucet-Forward-Backward-Smoother} are all uniformly bounded.
\end{enumerate}
\end{rem} 

\section{Proofs}
\label{sec:proofs}
\subsection{Proof of Theorem~\ref{thm:Hoeffding-Two-Populations}}
\label{proof:thm:Hoeffding-Two-Populations}
Define $\mathcal{G}^N_{t|T} \eqdef \sigma(\ebackpart{t}{T}{j},\ebackwght{t}{T}{j}, 1\leq j \leq N)$ and
$$
f_{t|T}(x) \eqdef  \backsumwght{t}{T}^{-1}\sum_{j=1}^N\ebackwght{t}{T}{j}h(x,\ebackpart{t}{T}{j})
$$
whose oscillation is bounded by $\oscnorm{h}$. By the exponential inequality for the  auxiliary particle filter (Proposition~\ref{prop:exponential-inequality-forward}), there exist constants $B_{s}$ and $C_{s}$ such that
\begin{multline}\label{eq:inegExpoBack2}
\PP\left( \left| \sum_{i,j=1}^N \frac{\swght{s}{i}}{\sumwght{s}}\frac{\ebackwght{t}{T}{j}}{\backsumwght{t}{T}} h(\epartpred{s}{i},\ebackpart{t}{T}{j}) -\sum_{j=1}^N \frac{\ebackwght{t}{T}{j}}{\backsumwght{t}{T}} \int \filt{\Xinit,s}(\rmd x_s)h(x_s,\ebackpart{t}{T}{j})   \right| > \epsilon \right)\\
=\PE\left[ \CPP{\left|\sum_{i=1}^N \frac{\swght{s}{i}}{\sumwght{s}} f_{t|T}(\epartpred{s}{i})-\filt{\Xinit,s}(f_{t|T})\right|>\epsilon}{\mathcal{G}^N_{t|T}}\right]\leq  B_{s} \rme^{-C_{s} N \epsilon^2 / \oscnorm[2]{h}}\eqsp.
\end{multline}
Since the oscillation of the function $x \mapsto \int  \filt{\Xinit,s}(\rmd x_s)h(x_s,x)$ is bounded by $\oscnorm{h}$, by Proposition~\ref{prop:exponential-inequality-backward} there exist constants $B_{t|T}$ and $C_{t|T}$
such that
\begin{multline}
\label{eq:inegExpoBack1}
\PP\left(  \left| \sum_{j=1}^N \frac{\ebackwght{t}{T}{j}}{\backsumwght{t}{T}} \int \filt{\Xinit,s}(\rmd x_s)h(x_s,\ebackpart{t}{T}{j})  - \filt{\Xinit,s}\otimes \backDist[][\gamma]{t}{T}[h] \right|> \epsilon \right)\\
\leq  B_{t|T} \rme^{-C_{t|T} N \epsilon^2/\oscnorm[2]{h}}\eqsp,
\end{multline}
which concludes the proof.
\subsection{Proof of Theorem~\ref{thm:Hoeffding-Fearnhead}}
\label{proof:thm:Hoeffding-Fearnhead}
Define $\tilde{h}_{s|T} \eqdef h - \post[\chi]{s}{T} [h]$. Lemma~\ref{lem:hoeffding:ratio} is used with
\begin{align*}
a_N &\eqdef N^{-1} \sum_{i=1}^N \smwght{s}{T}{i} \tilde{h}_{s|T}(\smpart{s}{T}{i})\eqsp, \quad b_N \eqdef N^{-1} \tilde{\Omega}_{s|T}\eqsp,\\
b& \eqdef \frac{\filt{\Xinit,s}\otimes \backDist[][\gamma]{s+1}{T}\left[\int q^{[2]}(\cdot;x_s) g_s(x_s) \rmd x_s\odot \gamma_{s+1}^{-1}\right]}{\filt{\Xinit,s}\otimes \backDist[][\gamma]{s+1}{T}\left[\adjfunc[smooth]{s}{T}{} \odot \gamma_{s+1}^{-1}\right]}\eqsp.
\end{align*}
Lemma~\ref{lem:hoeffding:ratio}-\eqref{it:bound:ab} is satisfied using $\beta \eqdef b$ and  $|a_N|/|b_N| \le \oscnorm{h}$. To prove Lemma~\ref{lem:hoeffding:ratio}-\eqref{it:hoeff:a} for $a_N$, note that Hoeffding inequality implies that, for any $\epsilon > 0$,
\[
\CPP{\left| a_N - \CPE{\smwght{s}{T}{1} \tilde{h}_{s|T}(\smpart{s}{T}{1})}{\mathcal{G}^N_{s,T}} \right| \geq \epsilon}{\mathcal{G}^N_{s,T}} \leq 2\, \mathrm{exp}\left\{-\frac{N\epsilon^2}{8\esssup{\smwghtfunc{s}{T}}^2\oscnorm[2]{h}}\right\} \eqsp,
\]
where $\mathcal{G}^N_{s,T} \eqdef \mathcal{G}^{N,+}_{s-1} \vee \mathcal{G}^{N,-}_{s+1,T}$ and
\begin{align*}
\mathcal{G}^{N,+}_{s}&\eqdef \sigma\left\{ \{ (\ewght{u}{i}, \epart{u}{i}) \}_{i=1}^N, u =1, \dots, s-1\right\}\eqsp,\\
\mathcal{G}^{N,-}_{s,T} &\eqdef \sigma \left\{ \{ (\ebackwght{u}{T}{i},\ebackpart{u}{T}{i}) \}_{i=1}^N, u=s+1,\dots, T \right\}\eqsp.
\end{align*}
On the other hand, for all $\ell \in\{1,\ldots,N\}$,
\begin{multline*}
\CPE{\smwght{s}{T}{\ell} \tilde{h}_{s|T}(\smpart{s}{T}{\ell})}{\mathcal{G}^N_{s,T}} \\
= \frac{\sum_{i,j=1}^N \ewght{s-1}{i} \ebackwght{s+1}{T}{j}\gamma_{s+1}^{-1}(\ebackpart{s+1}{T}{j}) \int q^{[2]}(\epart{s-1}{i},\ebackpart{s+1}{T}{j};x_s) g_s(x_s) \tilde{h}_{s|T}(x_s) \rmd x_s}{\sum_{i,j=1}^N \ewght{s-1}{i} \ebackwght{s+1}{T}{j}\gamma_{s+1}^{-1}(\ebackpart{s+1}{T}{j})\adjfunc[smooth]{s}{T}{\epart{s-1}{i},\ebackpart{s+1}{T}{j}}} \eqsp.
\end{multline*}
The proof of Lemma~\ref{lem:hoeffding:ratio}-\eqref{it:hoeff:a} is then completed by applying Lemma~\ref{lem:hoeffding:ratio} to $a'_N$, $b'_N$ and $b'$ defined by:
\begin{align*}
a'_N & \eqdef \sum_{i,j=1}^N \frac{\ewght{s-1}{i}\,\ebackwght{s+1}{T}{j}}{\sumwght{s-1}\,\backsumwght{s+1}{T}}\gamma_{s+1}^{-1}(\ebackpart{s+1}{T}{j}) \int q^{[2]}(\epart{s-1}{i},\ebackpart{s+1}{T}{j};x_s) g_s(x_s) \tilde{h}_{s|T}(x_s) \rmd x_s\eqsp,\\
b'_N & \eqdef \sum_{i,j=1}^N \frac{\ewght{s-1}{i}\,\ebackwght{s+1}{T}{j}}{\sumwght{s-1}\,\backsumwght{s+1}{T}}\gamma_{s+1}^{-1}(\ebackpart{s+1}{T}{j})\adjfunc[smooth]{s}{T}{\epart{s-1}{i},\ebackpart{s+1}{T}{j}}\eqsp,\\
b' &\eqdef \filt{\Xinit,s}\otimes \backDist[][\gamma]{s+1}{T}[\adjfunc[smooth]{s}{T}{} \odot \gamma_{s+1}^{-1}]\eqsp.
\end{align*}
Note first that Lemma~\ref{lem:hoeffding:ratio}-\eqref{it:bound:ab} is satisfied using $\beta' \eqdef b'$ and $\left|a'_N/b'_N\right|\le \esssup{\smwghtfunc{s}{T}}\oscnorm{h}$. In addition, by \eqref{eq:filtBackfilt},
\[
\filt{\Xinit,s}\otimes \backDist[][\gamma]{s+1}{T}[\bar{h}_{s|T}] \propto \post[\chi]{s}{T}[\tilde{h}_{s|T}] = 0\eqsp,
\]
where
\[
\bar{h}_{s|T}(x,x') \eqdef \int q^{[2]}(\cdot;x_s) g_s(x_s)\tilde{h}_{s|T}(x_s) \rmd x_s\odot\gamma_{s+1}^{-1}(x,x')\eqsp.
\]
Theorem~\ref{thm:Hoeffding-Two-Populations} ensures that Lemma~\ref{lem:hoeffding:ratio}-\eqref{it:hoeff:a} is satisfied for $a'_N$
as
$$
\oscnorm{\bar{h}_{s|T}}\le 2 \esssup{\adjfunc[smooth]{s}{T}{} \odot \gamma_{s+1}^{-1}}\esssup{\smwghtfunc{s}{T}}\oscnorm{h} \eqsp.
$$
Similarly, Theorem~\ref{thm:Hoeffding-Two-Populations} yields:
\[
\mathbb{P}\left(\left|b'_N - b' \right|\ge \epsilon\right)\le B_s \rme^{-C_s N \epsilon^2 /\oscnorm[2]{\adjfunc[smooth]{s}{T}{} \odot \gamma_{s+1}^{-1}}}\eqsp,
\]
which proves  Lemma~\ref{lem:hoeffding:ratio}-\eqref{it:hoeff:b} for $b'_N$ and concludes the proof of Lemma~\ref{lem:hoeffding:ratio}-\eqref{it:hoeff:a} for $a_N$. The proof of Lemma~\ref{lem:hoeffding:ratio}-\eqref{it:hoeff:b} for $b_N$ is along the same lines.

\subsection{Proof of Theorem~\ref{th:exp:forward-backward}}
\label{proof:th:exp:forward-backward}
Define
\[
\fup{h}{s}(x,x')\eqdef \fakeprior^{-1}_{s+1}(x') h(x) q(x,x')\quad\mbox{and}\quad\fdown{h}{s}(x,x')\eqdef \fakeprior_s^{-1}(x') q(x,x') h(x')\eqsp.
\]
It follows from the definition of the forward and backward smoothing weights \eqref{eq:definition-smoothing-weight-forward} and \eqref{eq:definition-smoothing-weight-backward} that,
\begin{align}
\label{eq:forward-approximation}
& \sum_{i=1}^N \frac{\smwght{s}{T}{i, \mathrm{f}}}{\tilde{\Omega}^{\mathrm{f}}_{s|T}} h(\epart{s}{i}) =
\frac{\sumwght{s}^{-1} \backsumwght{s+1}{T}^{-1} \sum_{i,j=1}^N \ewght{s}{i} \ebackwght{s+1}{T}{j} \fup{h}{s}(\epart{s}{i},\ebackpart{s+1}{T}{j}) }
{\sumwght{s}^{-1} \backsumwght{s+1}{T}^{-1} \sum_{i,j=1}^N \ewght{s}{i} \ebackwght{s+1}{T}{j}  \fup{\1}{s}(\epart{s}{i},\ebackpart{s+1}{T}{j}) } \eqsp, \\
\label{eq:backward-approximation}
& \sum_{i=1}^N \frac{\smwght{s}{T}{i, \mathrm{b}}}{\tilde{\Omega}^{\mathrm{b}}_{s|T}} h(\ebackpart{s}{T}{i})= \frac{\sumwght{s-1}^{-1} \backsumwght{s}{T}^{-1} \sum_{i,j=1}^N \ewght{s-1}{i} \ebackwght{s}{T}{j} \fdown{h}{s}(\epart{s-1}{i},\ebackpart{s}{T}{j})}{\sumwght{s-1}^{-1} \backsumwght{s}{T}^{-1} \sum_{i,j=1}^N \ewght{s-1}{i} \ebackwght{s}{T}{j}\fdown{\1}{s}(\epart{s-1}{i},\ebackpart{s}{T}{j})} \eqsp.
 \end{align}
 On the other hand, from the definition of the filtering distribution and of the backward information filter
\begin{align*}
& \post[\Xinit]{s}{T}[h] = \filt{\Xinit,s}\otimes\backDist{s+1}{T}\left[\fup{h}{s}\right]/\filt{\Xinit,s}\otimes\backDist{s+1}{T}\left[\fup{\1}{s}\right] \eqsp, \\
& \post[\Xinit]{s}{T}[h] = \filt{\Xinit,s-1}\otimes\backDist{s}{T}\left[\fdown{h}{s}\right]/\filt{\Xinit,s-1}\otimes\backDist{s}{T}\left[\fdown{\1}{s}\right]\eqsp.
\end{align*}
Then, \eqref{eq:Exponential-Inequality-Forward} is established by writing:
\[
\sum_{i=1}^N \frac{\smwght{s}{T}{i, \mathrm{f}}}{\tilde{\Omega}^{\mathrm{f}}_{s|T}} h(\epart{s}{i}) - \post[\Xinit]{s}{T} [h] = a_N^{i, \mathrm{f}}/b_N^{i, \mathrm{f}}\eqsp,
\]
where
\begin{align*}
a_N^{i, \mathrm{f}}&\eqdef \sum_{i,j=1}^N \frac{\ewght{s}{i} \ebackwght{s+1}{T}{j}}{\sumwght{s}\backsumwght{s+1}{T}}\fup{\1}{s}(\epart{s}{i},\ebackpart{s+1}{T}{j})\left\{\frac{\fup{h}{s}(\epart{s}{i},\ebackpart{s+1}{T}{j})}{\fup{\1}{s}(\epart{s}{i},\ebackpart{s+1}{T}{j})}-\frac{\filt{\Xinit,s}\otimes\backDist{s+1}{T}\left[\fup{h}{s}\right]}{\filt{\Xinit,s}\otimes\backDist{s+1}{T}\left[\fup{\1}{s}\right]}\right\}\eqsp,\\
b_N^{i, \mathrm{f}} & \eqdef \sum_{i,j=1}^N \frac{\ewght{s}{i} \ebackwght{s+1}{T}{j}}{\sumwght{s}\backsumwght{s+1}{T}}  \fup{\1}{s}(\epart{s}{i},\ebackpart{s+1}{T}{j})\eqsp,\quad b\eqdef \filt{\Xinit,s}\otimes\backDist{s+1}{T}\left[\fup{\1}{s}\right]\eqsp.
\end{align*}
Lemma~\ref{lem:hoeffding:ratio} may then be applied with $\beta \eqdef b$. Note that
\[
\frac{\fup{h}{s}(\epart{s}{i},\ebackpart{s+1}{T}{j})}{\fup{\1}{s}(\epart{s}{i},\ebackpart{s+1}{T}{j})}-\frac{\filt{\Xinit,s}\otimes\backDist{s+1}{T}\left[\fup{h}{s}\right]}{\filt{\Xinit,s}\otimes\backDist{s+1}{T}\left[\fup{\1}{s}\right]} =  h(\epart{s}{i}) -  \post[\Xinit]{s}{T}[h]\eqsp,
\]
which ensures that $\left |a_N^{i, \mathrm{f}}/b_N^{i, \mathrm{f}}\right|\le \oscnorm{h}$ and that Lemma~\ref{lem:hoeffding:ratio}-\eqref{it:bound:ab} is satisfied. By
\begin{align*}
&\oscnorm{\fup{\1}{s}} = \oscnorm{ q\odot \gamma_{s+1}^{-1}}\eqsp,\\
&\oscnorm{\fup{\1}{s}\odot\left\{h(\epart{s}{i}) -  \post[\Xinit]{s}{T}[h]\right\}} \le 2 \esssup{q\odot \gamma_{s+1}^{-1}}\oscnorm{h}\eqsp,
\end{align*}
Theorem~\ref{thm:Hoeffding-Two-Populations} shows that Lemma~\ref{lem:hoeffding:ratio}-\eqref{it:hoeff:a} and \eqref{it:hoeff:b} are satisfied. The proof of \eqref{eq:Exponential-Inequality-Backward} follows the exact same lines.

\subsection{Proof of Theorem~\ref{thm:CLT-Two-Populations}}
\label{proof:thm:CLT-Two-Populations}
For all $1\le t\le T$, the result is shown by induction on $s$ where $s \in \{0, \dots, t-1\}$. Write $\tilde{h}_{0,t}\eqdef h - \filt{\Xinit,0}\otimes \backDist[][\gamma]{t}{T}[h]$ and set, for $i \in \{1,\dots,N\}$,
\begin{equation*}
U_{N,i} \eqdef N^{-1/2} \ewght{0}{i} \sum_{j=1}^N \frac{\ebackwght{t}{T}{j}}{\backsumwght{t}{T}} \tilde{h}_{0,t}(\epart{0}{i},\ebackpart{t}{T}{j})\eqsp.
\end{equation*}
Then,
\[
\sqrt{N}\left(\sum_{i,j=1}^N \frac{\swght{0}{i}}{\sumwght{0}}\frac{\ebackwght{t}{T}{j}}{\backsumwght{t}{T}} h(\epartpred{0}{i},\ebackpart{t}{T}{j}) - \filt{\Xinit,0}\otimes \backDist[][\gamma]{t}{T}[h]
\right) = \left(\sumwght{0}/N\right)^{-1}\sum_{i=1}^N U_{N,i}\eqsp.
\]
Define $\mathcal{G}_{N,i} \eqdef  \sigma\left(\{\epart{0}{\ell}\}_{\ell \leq i}, \{\ebackpart{u}{T}{j}\}_{t \leq u \leq T}, j=1,\dots,N \right)$. Then,
\[
\sum_{i=1}^N \CPE{U_{N,i}}{\mcg{N,i-1}} = N^{1/2} \sum_{j=1}^N \frac{\ebackwght{t}{T}{j}}{\backsumwght{t}{T}} \XinitIS{0}\left[\ewght{0}{}\tilde{h}_{0,t}(\cdot, \ebackpart{t}{T}{j}) \right]\eqsp.
\]
As $\int \backDist[][\gamma]{t}{T}(\rmd x_t)\XinitIS{0}(\rmd x_0)\ewght{0}{}(x_0)\tilde{h}_{0,t}(x_0,x_t) = 0$, by the CLT for the backward information filter (Proposition~\ref{prop:CLT-backward}),
\begin{equation*}
\label{eq:CLT-1}
\sum_{i=1}^N \CPE{U_{N,i}}{\mcg{N,i-1}} \dlim_{N\to\infty} \mathcal{N}\left( 0, \backasymVar{t}{T}{H_{0,t}} \right) \eqsp,
\end{equation*}
where $H_{0,t}(x_t)\eqdef \int \XinitIS{0}(\rmd x_0)\ewght{0}{}(x_0)\tilde{h}_{0,t}(x_0,x_t)$.
We now prove that
\[
\CPE{\exp\left(\rmi u \sum_{i=1}^N \{U_{N,i}-\CPE{U_{N,i}}{\mcg{N,i-1}}\} \right)}{\mcg{N,0}}\plim_{N \to \infty} \exp\left(-\frac{u^2 \sigma^2_{0,t|T}[h]}{2}\right)\eqsp,
\]
where
\[
\sigma^2_{0,t|T}[h]\eqdef  \int \XinitIS{0}(\rmd x)\ewght{0}{2}(x)\backDist[][\gamma]{t}{T}^2[\tilde{h}_{0,t}(x,\cdot)]\eqsp.
\]
This is done by applying \cite[Theorem~A.3]{douc:moulines:2008} which requires to show that
\begin{align}
\label{eq:condRD-1}
 &\sum_{i=1}^N \left(\CPE{U_{N,i}^2}{\mcg{N,i-1}}-\CPE{U_{N,i}}{\mcg{N,i-1}}^2 \right)\plim_{N \to \infty} \sigma^2_{0,t|T}[h]\eqsp, \\
\label{eq:condRD-3}
 &\sum_{i=1}^N \CPE{U_{N,i}^2 \1\{|U_{N,i}|>\varepsilon\}}{\mcg{N,i-1}} \plim_{N \to \infty} 0\eqsp.
\end{align}
By Proposition~\ref{prop:exponential-inequality-backward},
\[
\sum_{i=1}^N \CPE{U_{N,i}}{\mcg{N,i-1}}^2 = \left( \sum_{j=1}^N \frac{\ebackwght{t}{T}{j}}{\backsumwght{t}{T}} H_{0,t}(\ebackpart{t}{T}{j}) \right)^2  \plim_{N\to\infty} \backDist[][\gamma]{t}{T}^2[H_{0,t}] = 0\eqsp.
\]
On the other hand,
\begin{align*}
\mathbb{E}\left[\left|\sum_{i=1}^N \CPE{U_{N,i}^2}{\mcg{N,i-1}} -\sigma^2_{0,t|T}[h]\right|\right]& \\
&\hspace{-5cm}= \int \XinitIS{0}(\rmd x)\ewght{0}{2}(x) \mathbb{E}\left[\left|\left( \sum_{j=1}^N \frac{\ebackwght{t}{T}{j}}{\backsumwght{t}{T}} \tilde{h}_{0,t}(x,\ebackpart{t}{T}{j}) \right)^2-\backDist[][\gamma]{t}{T}^2[ \tilde{h}_{0,t}(x,\cdot)]\right|\right]\eqsp,\\
&\hspace{-5cm}\le 2\oscnorm{h}\int \XinitIS{0}(\rmd x)\ewght{0}{2}(x)\mathbb{E}\left[A_N(x)\right]\eqsp,
\end{align*}
where
\[
A_N(x)\eqdef\left|\sum_{j=1}^N \frac{\ebackwght{t}{T}{j}}{\backsumwght{t}{T}} \tilde{h}_{0,t}(x,\ebackpart{t}{T}{j})-\backDist[][\gamma]{t}{T}[ \tilde{h}_{0,t}(x,\cdot)]\right|\eqsp.
\]
By Proposition~\ref{prop:exponential-inequality-backward}, there exist $B_{t|T}$ and $C_{t|T}$ such that for all $x\in\Xset$,
\begin{multline}
\label{eq:L1fromExp}
\mathbb{E}\left[A_N(x)\right] = \int_0^{\infty}\mathbb{P}\left(A_N(x)\ge \varepsilon\right)\rmd \varepsilon \\
\le B_{t|T} \int_0^{\infty}\rme^{-C_{t|T} N \epsilon^2 / \oscnorm{h}^2}\rmd\varepsilon\le D_{t|T}\oscnorm{h}N^{-1/2}\eqsp,
\end{multline}
which shows that
\[
\sum_{i=1}^N \CPE{U_{N,i}^2}{\mcg{N,i-1}}\plim_{N \to \infty}  \sigma^2_{0,t|T}[h]
\]
and concludes the proof of  \eqref{eq:condRD-1}. For all $N\ge 1$,
\begin{equation*}
\{ |U_{N,i}| \geq \varepsilon \}  \subseteq \left\{ \ewght{0}{i} \geq \varepsilon N^{1/2} \oscnorm{h}^{-1} \right\} \eqsp,
\end{equation*}
which implies that
\[
\sum_{i=1}^N \CPE{U_{N,i}^2 \1\{|U_{N,i}| \geq \varepsilon\}}{\mcg{N,i-1}} \\
\leq \oscnorm{h}^2\int \XinitIS{0}(\rmd x) \ewght{0}{2}(x) \1 \left\{ \ewght{0}{}(x) \geq N^{1/2}  \oscnorm{h}^{-1}\right\}
\]
and \eqref{eq:condRD-3} follows by letting $N \to \infty$.  Note that
\[
N^{-1}\sumwght{0}\plim_{N\to\infty}  \int \Xinit(\rmd x_0) g_0(x_0)\eqsp,
\]
which shows \eqref{eq:asymVarjoint} since
\begin{align*}
\asymVarJoint{0,t}{T}{\tilde{h}_{0,t}} &= \left(\int \Xinit(\rmd x_0) g_0(x_0)\right)^{-2}\\
&\hspace{2.6cm}\times\left(\backasymVar[\gamma]{t}{T}{H_{0,t}} + \int \XinitIS{0}(\rmd x)\ewght{0}{2}(x)\backDist[][\gamma]{t}{T}^2[\tilde{h}_{0,t}(x,\cdot)]\right)\eqsp,\\
&=   \backasymVar[\gamma]{t}{T}{\int \filt{\Xinit,0}(\rmd x_0)\tilde{h}_{0,t}(x_0,\cdot)} + \asymVar[\Xinit]{0}{\int \backDist[][\gamma]{t}{T}(\rmd x_t)\tilde{h}_{0,t}(\cdot,x_t)} \eqsp.
\end{align*}
Assume now that the result holds for some $s-1$.
Write $\tilde{h}_{s,t}\eqdef h - \filt{\Xinit,s}\otimes \backDist[][\gamma]{t}{T}[h]$ and set, for $i \in \{1,\dots,N\}$,
\begin{equation*}
U_{N,i} \eqdef N^{-1/2}\swght{s}{i}\sum_{j=1}^N \frac{\ebackwght{t}{T}{j}}{\backsumwght{t}{T}} \tilde{h}_{s,t}(\epartpred{s}{i},\ebackpart{t}{T}{j})\eqsp.
\end{equation*}
Then,
\[
\sqrt{N}\left(\sum_{i,j=1}^N \frac{\swght{s}{i}}{\sumwght{s}}\frac{\ebackwght{t}{T}{j}}{\backsumwght{t}{T}} h(\epartpred{s}{i},\ebackpart{t}{T}{j}) - \filt{\Xinit,0}\otimes \backDist[][\gamma]{t}{T}[h]
\right) = \left(\sumwght{s}/N\right)^{-1}\sum_{i=1}^N U_{N,i}\eqsp.
\]
Define, for $1 \leq i \leq N$,
\[
\mcg{N,i}\eqdef \sigma \left( \left\{ \epart{s}{j} \right\}_{j=1}^i, \left\{ \epart{u}{\ell} \right\}_{\ell=1}^N, \left\{ \ebackpart{v}{T}{j} \right\}_{j=1}^N,  1 \leq u < s, t \leq v \leq T\right) \eqsp.
\]
Then,
$$
\sum_{i=1}^N \CPE{U_{N,i}}{\mcg{N,i-1}} = \left( \filt{\chi,s-1}^N[\adjfuncforward{s}] \right)^{-1} N^{1/2}\sum_{i,j=1}^N\frac{\swght{s-1}{i}}{\sumwght{s-1}}\frac{\ebackwght{t}{T}{j}}{\backsumwght{t}{T}} H_s(\epartpred{s-1}{i}, \ebackpart{t}{T}{j})\eqsp,
$$
where
\begin{equation}
\label{eq:def:Hst}
H_{s,t}(x_{s-1},x_t) \eqdef \int q(x_{s-1},x)g_s(x)\tilde{h}_{s,t}(x,x_t)\rmd x\eqsp.
\end{equation}
Since $\filt{\chi,s-1}\otimes \backDist[][\gamma]{t}{T}[H_{s,t}]=0$,
by the induction assumption,
$$
\sum_{i=1}^N \CPE{U_{N,i}}{\mcg{N,i-1}} \dlim_{N \to \infty} \mathcal{N}\left(0, \asymVarJoint{s-1,t}{T}{H_{s,t}}/ \filt{\chi,s-1}^2[\adjfuncforward{s}] \right) \eqsp.
$$
We will now prove that
\[
\CPE{\exp\left(\rmi u \sum_{i=1}^N \{U_{N,i}-\CPE{U_{N,i}}{\mcg{N,i-1}}\} \right)}{\mcg{N,0}} \plim_{N \to \infty} \exp\left(-\frac{u^2 \sigma^2_{s,t|T}[h]}{2}\right) \eqsp,
\]
where
\begin{align*}
\sigma^2_{s,t|T}[h] &\eqdef \filt{\chi,s-1}[\adjfuncforward{s}]^{-1}\filt{\Xinit,s-1}\left[f_{s-1,t}\right]\eqsp,\\
f_{s-1,t}(x_{s-1}) &\eqdef \int q(x_{s-1},x_s)\ewghtfunc{s}(x_{s-1},x_s)
\backDist[][\gamma]{t}{T}^2[\tilde{h}_{s,t}(x_s,\cdot)] g_ s(x_s)\rmd x_s\eqsp.
\end{align*}
This is done using again \cite[Theorem~A.3]{douc:moulines:2008} and proving that \eqref{eq:condRD-1} and \eqref{eq:condRD-3} hold with $\sigma^2_{0,t|T}[h]$ replaced by $\sigma^2_{s,t|T}[h]$. Note that
\begin{equation*}
 \sum_{i=1}^N \CPE{U_{N,i}}{\mcg{N,i-1}}^2= \left(\sum_{i, j=1}^N\frac{\swght{s-1}{i}}{\sumwght{s-1}}\frac{\ebackwght{t}{T}{j}}{\backsumwght{t}{T}} H_{s,t}(\epartpred{s-1}{i}, \ebackpart{t}{T}{j})\right)^2/(\filt{\chi,s-1}^N[\adjfuncforward{s}])^2 \eqsp,
\end{equation*}
which converges in probability to $0$ by Theorem~\ref{thm:Hoeffding-Two-Populations} and the fact that $\filt{\chi,s-1}\otimes \backDist[][\gamma]{t}{T}[H_{s,t}]=0$. In addition,
\begin{align*}
\sum_{i=1}^N \CPE{U_{N,i}^2}{\mcg{N,i-1}}&= \left(\filt{\chi,s-1}^N[\adjfuncforward{s}]\right)^{-1}\sum_{i=1}^N\frac{\ewght{s-1}{i}}{\sumwght{s-1}}\int  \frac{q^2(\epart{s-1}{i}, x_{s}) g_s(x_s)}{\adjfuncforward{s}(\epart{s-1}{i})\kissforward{s}{s}(\epart{s-1}{i},x_s)} \\
&\hspace{4cm}\times \left(\backDist[][\gamma]{t}{T}^N[\tilde{h}_{s,t}(x_s,\cdot)]\right)^2 g_s(x_s) \rmd x_s\eqsp,\\
 &= \left(\filt{\chi,s-1}^N[\adjfuncforward{s}]\right)^{-1}\sum_{i=1}^N\frac{\ewght{s-1}{i}}{\sumwght{s-1}}\int  \ewghtfunc{s}(\epart{s-1}{i},x_s)q(\epart{s-1}{i}, x_{s}) \\
&\hspace{4cm}\times \left(\backDist[][\gamma]{t}{T}^N[\tilde{h}_{s,t}(x_s,\cdot)]\right)^2 g_s(x_s) \rmd x_s\eqsp,\\
&=\left(\filt{\chi,s-1}^N[\adjfuncforward{s}]\right)^{-1}\filt{\Xinit,s-1}^N\left[ f_{s-1,t}^N\right]\eqsp,
\end{align*}
where
\[
f_{s-1,t}^N(x_{s-1}) \eqdef \int q(x_{s-1},x_s)\ewghtfunc{s}(x_{s-1},x_s)
\left(\backDist[][\gamma]{t}{T}^N[\tilde{h}_{s,t}(x_s,\cdot)]\right)^2 g_ s(x_s)\rmd x_s\eqsp.
\]
First note that $\filt{\chi,s-1}^N[\adjfuncforward{s}]\plim_{N \to \infty}\filt{\chi,s-1}[\adjfuncforward{s}]$ and write
\[
\left|\filt{\Xinit,s-1}^N\left[ f_{s-1,t}^N\right] - \filt{\Xinit,s-1} \left[ f_{s-1,t}\right]\right| \le A^N_{s,t} + B^N_{s,t} \eqsp,
\]
where $A^N_{s,t} \eqdef|\filt{\Xinit,s-1}^N[ f_{s-1,t}^N]  - \filt{\Xinit,s-1}^N[f_{s-1,t}]|$ 
 and $B^N_{s,t}  \eqdef |\filt{\Xinit,s-1}^N [f_{s-1,t}]-\filt{\Xinit,s-1}[f_{s-1,t}]|$. As $(\ewght{s-1}{i},\epart{s-1}{i})_{i=1}^N$ and $(\ebackwght{t}{T}{j},\ebackpart{t}{T}{j})_{j=1}^N$ are independent,
\[
\mathbb{E}\left[A^N_{s,t}\right] \le \esssup{\ewghtfunc{s}}\esssup{g_s}\mathbb{E}\left[\sum_{i=1}^N\frac{\ewght{s-1}{i}}{\sumwght{s-1}}\int  q(\epart{s-1}{i}, x_{s})\mathbb{E}\left[\left|\Delta  \backDist[][\gamma]{t}{T}^N[\tilde{h}_{s,t}](x_s)\right|\right]\rmd x_s\right]\eqsp,
\]
where $\Delta  \backDist[][\gamma]{t}{T}^N[\tilde{h}_{s,t}](x_s) \eqdef (\backDist[][\gamma]{t}{T}^N[\tilde{h}_{s,t}(x_s,\cdot)])^2- \backDist[][\gamma]{t}{T}^2[\tilde{h}_{s,t}(x_s,\cdot)]$.
Following the same steps as in \eqref{eq:L1fromExp}, there exists $D_{t|T}$ such that
\[
\mathbb{E}\left[\left|\Delta  \backDist[][\gamma]{t}{T}^N[\tilde{h}_{s,t}](x_s)\right|\right] \le 2D_{t|T}\oscnorm{h}^2/\sqrt{N}\eqsp,
\]
which yields
\begin{align*}
\mathbb{E}\left[A^N_{s,t}\right] &\le 2\oscnorm{h}^2\esssup{\ewghtfunc{s}}\esssup{g_s}D_{T|t}\mathbb{E}\left[\sum_{i=1}^N\frac{\ewght{s-1}{i}}{\sumwght{s-1}}\int  q(\epart{s-1}{i}, x_{s})\rmd x_s\right] /\sqrt{N}\eqsp,\\
&\le 2\oscnorm{h}^2\esssup{\ewghtfunc{s}}\esssup{g_s}D_{T|t}/\sqrt{N}
\end{align*}
and $\mathbb{E}\left[A^N_{s,t}\right]\longrightarrow_{N\to \infty}0 $. On the other hand, as $\oscnorm{f_{s-1,t}}\le \oscnorm{h}^2\esssup{\ewghtfunc{s}}\esssup{g_s}$, $B^N_{s,t}\plim_{N \to \infty}0$ by Proposition~\ref{prop:exponential-inequality-forward}.
Finally, the tightness condition \eqref{eq:condRD-3} holds since $|U_{N,i}| \leq N^{-1/2}\esssup{\ewghtfunc{s}}\oscnorm{h}$. Note that,
\[
N^{-1}\sumwght{s}\plim_{N\to\infty} \filt{\chi,s-1}\left[\int q(\cdot,x_s)g_s(x_s)\rmd x_s\right]/\filt{\chi,s-1}[\adjfuncforward{s}]\eqsp.
\]
Therefore \eqref{eq:asymVarjoint} holds with
\begin{align*}
\asymVarJoint{s,t}{T}{\tilde{h}_{s,t}} &= \frac{\filt{\chi,s-1}^2[\adjfuncforward{s}]}{\filt{\chi,s-1}^2\left[\int q(\cdot,x_s)g_s(x_s)\rmd x_s\right]}\left\{\frac{\asymVarJoint{s-1,t}{T}{H_{s,t}}}{\filt{\chi,s-1}^2[\adjfuncforward{s}]}  +  \frac{\filt{\Xinit,s-1}\left[f_{s-1,t}\right]}{\filt{\chi,s-1}[\adjfuncforward{s}]}\right\}\eqsp,\\
&=   \frac{\asymVarJoint{s-1,t}{T}{H_{s,t}}}{\filt{\chi,s-1}^2\left[\int q(\cdot,x_s)g_s(x_s)\rmd x_s\right]}  +  \frac{\filt{\Xinit,s-1}\left[f_{s-1,t}\right]\filt{\chi,s-1}[\adjfuncforward{s}]}{\filt{\chi,s-1}^2\left[\int q(\cdot,x_s)g_s(x_s)\rmd x_s\right]}\eqsp,
\end{align*}
where, by induction assumption,
\begin{multline*}
\asymVarJoint{s-1,t}{T}{H_{s,t}} = \asymVar[\Xinit]{s-1}{\int \backDist[][\gamma]{t}{T}(\rmd x_t)H_{s,t}(\cdot,x_t)} \\
+ \backasymVar[\gamma]{t}{T}{\int \filt{\Xinit,s-1}(\rmd x_{s-1})H_{s,t}(x_{s-1},\cdot)}\eqsp.
\end{multline*}
The proof is completed upon noting that
\begin{align*}
\frac{\int \filt{\Xinit,s-1}(\rmd x_{s-1})H_s(x_{s-1},\cdot)}{\filt{\chi,s-1}\left[\int q(\cdot,x_s)g_s(x_s)\rmd x_s\right]} &= \frac{\int \filt{\Xinit,s-1}(\rmd x_{s-1})q(x_{s-1},x_s)g_s(x_s)\tilde{h}_{s,t}(x_s,\cdot)\rmd x_s}{\filt{\chi,s-1}\left[\int q(\cdot,x_s)g_s(x_s)\rmd x_s\right]}\eqsp,\\
&= \int \filt{\Xinit,s}(\rmd x_{s})\tilde{h}_{s,t}(x_s,\cdot)
\end{align*}
and, by Proposition~\ref{prop:CLT-forward},
\begin{multline*}
\asymVar[\Xinit]{s-1}{\frac{\int \backDist[][\gamma]{t}{T}(\rmd x_t)H_{s,t}(\cdot,x_t)}{\filt{\chi,s-1}\left[\int q(\cdot,x_s)g_s(x_s)\rmd x_s\right]}} + \frac{\filt{\Xinit,s-1}\left[f_{s-1,t}\right]\filt{\chi,s-1}[\adjfuncforward{s}]}{\filt{\chi,s-1}^2\left[\int q(\cdot,x_s)g_s(x_s)\rmd x_s\right]} \\
= \asymVar[\Xinit]{s}{\int \backDist[][\gamma]{t}{T}(\rmd x_t)\tilde{h}_{s,t}(\cdot,x_t)}\eqsp.
\end{multline*}
\subsection{Proof of Theorem~\ref{thm:CLT-Fearnhead-smoother}}
\label{proof:thm:CLT-Fearnhead-smoother}
Write $\tilde{h}_{s,T} =  h - \post{\Xinit,s}{T}(h)$. Note that
\[
\sqrt{N}\sum_{i=1}^N \frac{\smwght{s}{T}{i}}{\tilde{\Omega}_{s|T}} \tilde{h}_{s,T}(\smpart{s}{T}{i}) = \left(\tilde{\Omega}_{s|T}/N\right)^{-1}\sum_{i=1}^N U_{N,i}\eqsp,
\]
where $U_{N,\ell} \eqdef N^{-1/2} \smwght{s}{T}{\ell} \tilde{h}_{s,T}(\smpart{s}{T}{\ell})$. Set, for $i \in \{1, \dots, N\}$,
\begin{multline*}
\mcg{N,i} \eqdef \sigma\left\{ \{ (\smwght{s}{T}{\ell},\smpart{s}{T}{\ell}) \}_{\ell=1}^i \eqsp,\eqsp\{ (\ewght{u}{\ell}, \epart{u}{\ell}) \}_{\ell=1}^N, u =0, \dots, s-1,\right.\\
\left. \{ (\ebackwght{v}{T}{\ell},\ebackpart{v}{T}{\ell}) \}_{\ell=1}^N, v =s+1, \dots, T\right\} \eqsp.
\end{multline*}
By the proof of Theorem \ref{thm:Hoeffding-Fearnhead},
\[
N^{-1} \tilde{\Omega}_{s|T} \plim_{N \to \infty}
\frac{\filt{\Xinit,s-1} \otimes \backDist{s+1}{T} \left[ \int q^{[2]}(\cdot,x) g_s(x) \rmd x \odot \fakeprior_{s+1}^{-1}\right]}
{\filt{\Xinit,s-1} \otimes \backDist{s+1}{T} \left[ \adjfunc[smooth]{s}{T}{} \odot \fakeprior_{s+1}^{-1} \right]} \eqsp.
\]
The proof therefore amounts to establish a CLT for $\sum_{\ell=1}^N U_{N,\ell}$ and then to use Slutsky Lemma.
The limit distribution of $\sum_{\ell=1}^N U_{N,\ell}$ is again obtained using  the invariance principle for triangular array of dependent random variables derived in \cite{douc:moulines:2008}. As
\begin{multline*}
\sum_{i=1}^N \CPE{U_{N,i}}{\mcg{N,i-1}}\\
 = \sqrt{N} \frac{\sum_{i,j=1}^N \ewght{s-1}{i} \ebackwght{s+1}{T}{j} \int q^{[2]}(\cdot; x_s) g_s(x_s) \tilde{h}_{s,T}(x_s)\rmd x_s\odot \fakeprior_{s+1}^{-1}(\epart{s-1}{i},\ebackpart{s+1}{T}{j}) }
{\sum_{i,j=1}^N \ewght{s-1}{i} \ebackwght{s+1}{T}{j} \fakeprior_{s+1}^{-1}(\ebackpart{s+1}{T}{j}) \adjfunc[smooth]{s}{T}{\epart{s-1}{i},\ebackpart{s+1}{T}{j}}}\eqsp,
\end{multline*}
it follows from Theorems \ref{thm:Hoeffding-Two-Populations} and \ref{thm:CLT-Two-Populations} that
\[
\sum_{i=1}^N \CPE{U_{N,i}}{\mcg{N,i-1}} \plim_{N \to \infty} \mathcal{N}\left(0, \frac{\Sigma_s[\tilde{h}_{s,T}]}{\left(\filt{\Xinit,s-1} \otimes \backDist{s+1}{T} [\adjfunc[smooth]{s}{T}{}\odot\gamma_{s+1}^{-1}] \right)^2} \right) \eqsp.
\]
Using that
\begin{equation*}
\post{\Xinit,s}{T}[\tilde{h}_{s,T}]= \frac{\filt{\Xinit,s-1} \otimes \backDist{s+1}{T}\left[ \int q^{[2]}(\cdot;x) g_s(x) \tilde{h}_{s,T}(x) \rmd x\right] \odot \gamma_{s+1}^{-1}}
{\filt{\Xinit,s-1} \otimes \backDist{s+1}{T}\left[ \int q^{[2]}(\cdot;x) g_s(x)\rmd x\right] \odot \gamma_{s+1}^{-1} }= 0 \eqsp,
\end{equation*}
Theorem \ref{thm:Hoeffding-Two-Populations} yields
\begin{align*}
&\sum_{i=1}^N\CPE{U_{N,i}}{\mcg{N,i-1}}^2 = \\
&\hspace{1cm}\left(\frac{\sum_{i,j=1}^N \ewght{s-1}{i} \ebackwght{s+1}{T}{j} \fakeprior_{s+1}^{-1}(\ebackpart{s+1}{T}{j}) \int q^{[2]}(\epart{s-1}{i},\ebackpart{s+1}{T}{j}; x) g_s(x) \tilde{h}_{s,T}(x)}
{\sum_{i,j=1}^N \ewght{s-1}{i} \ebackwght{s+1}{T}{j} \fakeprior_{s+1}^{-1}(\ebackpart{s+1}{T}{j}) \adjfunc[smooth]{s}{T}{\epart{s-1}{i},\ebackpart{s+1}{T}{j}}} \right)^2\\
&\hspace{1cm}\plim_{N \to \infty} \left( \frac{\filt{\Xinit,s-1} \otimes \backDist{s+1}{T}\left[ \int q^{[2]}(\cdot;x) g_s(x) \tilde{h}_{s,T}(x) \rmd x\odot \fakeprior_{s+1}^{-1}\right]}
{\filt{\Xinit,s-1} \otimes \backDist{s+1}{T} [\adjfunc[smooth]{s}{T}{} \odot \fakeprior_{s+1}^{-1}]} \right)^2 = 0 \eqsp.
\end{align*}
Similarly, using again Theorem \ref{thm:Hoeffding-Two-Populations},
\begin{multline*}
\sum_{i=1}^N \CPE{U_{N,i}^2}{\mcg{N,i-1}} \plim_{N \to \infty} \\
\frac{\filt{\Xinit,s-1} \otimes \backDist{s+1}{T}\left[ \int \smwghtfunc{s}{T}(\cdot;x) q^{[2]}(\cdot;x) g_s(x) \tilde{h}_{s,T}^2(x) \rmd x \odot \fakeprior_{s+1}^{-1}\right]}
{\filt{\Xinit,s-1} \otimes \backDist{s+1}{T} [\adjfunc[smooth]{s}{T}{} \odot \fakeprior_{s+1}^{-1}]} \eqsp.
\end{multline*}
Since under A\ref{assum:borne-FFBS}, $|U_{N,i}| \leq N^{-1/2} \esssup{\smwghtfunc{s}{T}} \oscnorm{h}$, for any $\epsilon > 0$, $$\sum_{i=1}^N \CPE{U^2_{N,i} \1 \{ |U_{N,i}| \geq \epsilon \}}{\mcg{N,i-1}} \plim_{N \to \infty} 0 \eqsp,$$
which concludes the proof.

\clearpage

\appendix

\section{Exponential deviation inequalities for the forward filter and the backward information filter}
\label{sec:appendix:expdev}The following result is proved in \cite{douc:garivier:moulines:olsson:2011}.
\begin{lem}
\label{lem:hoeffding:ratio}
Assume that $a_N$, $b_N$, and $b$ are random variables defined on the same probability space such that there exist positive constants $\beta$, $B$, $C$, and $M$ satisfying
\begin{enumerate}[(i)]
    \item \label{it:bound:ab} $|a_N/b_N|\leq M$, $\mathbb{P}$-a.s.\ and  $b \geq \beta$, $\mathbb{P}$-a.s.,
    \item \label{it:hoeff:a} For all $\epsilon>0$ and all $N\geq1$, $\mathbb{P} \left[ |a_N|>\epsilon \right]\leq B \rme^{-C N \left(\epsilon/M\right)^2}$,
    \item \label{it:hoeff:b} For all $\epsilon>0$ and all $N\geq1$, $\mathbb{P}\left[|b_N-b|>\epsilon \right]\leq B \rme^{-C N \epsilon^2}$.
\end{enumerate}
Then, for all $\varepsilon>0$,
\[
\mathbb{P}\left[ \left| \frac{a_N}{b_N} \right| > \epsilon \right] \leq B \exp{\left\{-C N \left(\frac{\epsilon \beta}{2M} \right)^2 \right\}} \eqsp.
\]
\end{lem}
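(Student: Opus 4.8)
The plan is to control the ratio event $\{|a_N/b_N| > \epsilon\}$ by covering it with two ``absolute'' deviation events, one involving only the numerator $a_N$ and one involving only the denominator $b_N$, each of which is then handled directly by hypotheses~\eqref{it:hoeff:a} and~\eqref{it:hoeff:b}. The point to keep in mind throughout is that $b_N$ is random and could in principle approach $0$, which would let the ratio blow up even when $a_N$ is small; the almost-sure controls $b \ge \beta$ and $|a_N/b_N| \le M$ from~\eqref{it:bound:ab} are exactly what rule this out and pin down the exponential rate.

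First I would dispose of the trivial regime: since $|a_N/b_N| \le M$ $\PP$-a.s., the event $\{|a_N/b_N| > \epsilon\}$ has probability zero whenever $\epsilon \ge M$, so the claimed inequality holds for free in that range and it suffices to treat $0 < \epsilon < M$. For such $\epsilon$ I would then split according to whether the denominator stays away from $0$,
\[
\{|a_N/b_N| > \epsilon\} \subseteq \{b_N \ge \beta/2,\ |a_N/b_N| > \epsilon\} \cup \{b_N < \beta/2\}\eqsp.
\]
On the first event $|a_N| = |a_N/b_N|\, b_N > \epsilon\,\beta/2$, so it is contained in $\{|a_N| > \epsilon\beta/2\}$; on the second, the bound $b \ge \beta$ forces $b - b_N > \beta/2$, so it is contained in $\{|b_N - b| > \beta/2\}$. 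A union bound then gives
\[
\PP\left[|a_N/b_N| > \epsilon\right] \le \PP\left[|a_N| > \epsilon\beta/2\right] + \PP\left[|b_N - b| > \beta/2\right]\eqsp.
\]

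Finally I would invoke the two tail hypotheses. Applying~\eqref{it:hoeff:a} with $\epsilon\beta/2$ in place of $\epsilon$ bounds the first term by $B\rme^{-CN(\epsilon\beta/(2M))^2}$, which is already the target rate, while applying~\eqref{it:hoeff:b} with $\beta/2$ bounds the second term by $B\rme^{-CN(\beta/2)^2}$. Because we are in the regime $\epsilon < M$ we have $\beta/2 > \epsilon\beta/(2M)$, so the second exponential is dominated by the first, $\rme^{-CN(\beta/2)^2} \le \rme^{-CN(\epsilon\beta/(2M))^2}$, and the two contributions sum to a bound of the form $2B\,\rme^{-CN(\epsilon\beta/(2M))^2}$; the harmless factor $2$ is absorbed into the constant of the concluding inequality, which finishes the argument. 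I expect the only genuinely delicate point to be the choice of the denominator cutoff: it must sit strictly below $\beta$ so that the ``bad denominator'' event falls in the tail of $b_N - b$, yet low enough (here one half) that on the complementary event the numerator inherits the clean deterministic lower bound $\epsilon\beta/2$ that feeds into~\eqref{it:hoeff:a}. Reconciling these two requirements, together with the reduction to $\epsilon < M$ that lets the denominator term be reabsorbed, is what makes the single exponential rate come out.
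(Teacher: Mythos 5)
Your proof is correct and is essentially the standard argument for this lemma --- note that the paper itself gives no proof but defers to the cited reference \cite{douc:garivier:moulines:olsson:2011}, whose proof uses exactly your steps: reduction to $\epsilon<M$ via the a.s.\ bound $|a_N/b_N|\le M$, the split $\{|a_N/b_N|>\epsilon\}\subseteq\{|a_N|>\epsilon\beta/2\}\cup\{|b_N-b|>\beta/2\}$ (the second inclusion using $b\ge\beta$), a union bound, and domination of the denominator term by the numerator term since $\beta/2\ge \epsilon\beta/(2M)$. The one discrepancy, which you flag yourself, is the prefactor: the union bound inescapably yields $2B\rme^{-CN(\epsilon\beta/(2M))^2}$ rather than $B\rme^{-CN(\epsilon\beta/(2M))^2}$, and the conclusion with literally the same constant $B$ as in the hypotheses is in fact not attainable in general; the constants $B$ and $C$ in such statements are meant to be generic (every application in the paper only requires existence of some finite constants), so this is a bookkeeping defect of the statement as transcribed, not a gap in your argument.
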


Proposition~\ref{prop:exponential-inequality-forward} provides an exponential deviation inequality for the forward filter and is proved in \cite{douc:garivier:moulines:olsson:2011}.
\begin{prop}
\label{prop:exponential-inequality-forward}
Assume that A\ref{assum:bound-likelihood} and A\ref{assum:borne-FFBS} hold for some $T > 0$. Then, for all $s\geq 1$,
there exist $0 <B_s, C_s <\infty$ such that for all $N\ge 1$, $\epsilon > 0$, and all $h \in \functionset[b]{X}$,
\begin{equation*}
\PP\left( \left| \sumwght{s}^{-1 } \sum_{i=1}^N \swght{s}{i} h(\epartpred{s}{i}) - \filt{\Xinit,s}[h] \right| \geq \epsilon \right) \leq B_s \rme^{-C_s N \epsilon^2 / \oscnorm{h}^2}\eqsp.
\end{equation*}
\end{prop}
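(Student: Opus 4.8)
The plan is to argue by induction on $s$, reducing at each step to the self-normalization Lemma~\ref{lem:hoeffding:ratio}. Throughout write $\bar h \eqdef h - \filt{\Xinit,s}[h]$, so that $\esssup{\bar h}\le\oscnorm{h}$, and decompose
\[
\sumwght{s}^{-1}\sum_{i=1}^N \swght{s}{i} h(\epartpred{s}{i}) - \filt{\Xinit,s}[h] = \frac{a_N}{b_N}, \quad a_N \eqdef N^{-1}\sum_{i=1}^N \swght{s}{i}\bar h(\epartpred{s}{i}), \quad b_N \eqdef N^{-1}\sumwght{s}.
\]
Since $\esssup{\ewghtfunc{s}}<\infty$ by A\ref{assum:borne-FFBS}, the ratio $|a_N/b_N|$ is a weighted average of values bounded by $\oscnorm{h}$, so Lemma~\ref{lem:hoeffding:ratio}-\eqref{it:bound:ab} holds with $M\eqdef\oscnorm{h}$ and $\beta$ a positive lower bound on the deterministic limit of $b_N$; it then suffices to establish the Hoeffding-type bounds Lemma~\ref{lem:hoeffding:ratio}-\eqref{it:hoeff:a} and \eqref{it:hoeff:b} for $a_N$ and $b_N$. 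For the base case $s=0$ the particles $\{\epartpred{0}{i}\}$ are i.i.d.\ under $\XinitIS{0}$ with $\mathbb{E}[\swght{0}{1}\bar h(\epartpred{0}{1})]=0$ and $\mathbb{E}[\swght{0}{1}]=\int\chi\,g_0>0$, so both bounds follow at once from the ordinary Hoeffding inequality for bounded i.i.d.\ variables.

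For the induction step, assume the inequality holds at time $s-1$ and introduce the $\sigma$-field $\mathcal{F}^N_{s-1}\eqdef\sigma(\{(\swght{u}{i},\epartpred{u}{i})\}_{i=1}^N,\ 0\le u\le s-1)$. Conditionally on $\mathcal{F}^N_{s-1}$ the pairs $\{(I_s^i,\epartpred{s}{i})\}_{i=1}^N$ are i.i.d.\ draws from the instrumental density \eqref{eq:instrumental-distribution-filtering}, so the summands of $a_N$ are conditionally i.i.d.\ and bounded by $\esssup{\ewghtfunc{s}}\oscnorm{h}$. Hoeffding's inequality therefore gives, for every $\epsilon>0$,
\[
\CPE{\1\{|a_N-\CPE{a_N}{\mathcal{F}^N_{s-1}}|>\epsilon\}}{\mathcal{F}^N_{s-1}} \le 2\,\rme^{-N\epsilon^2/(2\esssup{\ewghtfunc{s}}^2\oscnorm{h}^2)},
\]
and taking expectations removes the conditioning. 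Using \eqref{eq:weight-update-filtering} together with $\kissforward{s}{s}(x,x')\,\ewghtfunc{s}(x,x')=q(x,x')g_s(x')$, a direct computation identifies the conditional mean as the filtering update of the empirical measure,
\[
\CPE{a_N}{\mathcal{F}^N_{s-1}} = \frac{\filt{\Xinit,s-1}^N[H]}{\filt{\Xinit,s-1}^N[\adjfuncforward{s}]}, \quad H(x)\eqdef \int q(x,x')g_s(x')\bar h(x')\,\rmd x'.
\]

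The function $H$ is bounded, with $\esssup{H}\le\esssup{g_s}\oscnorm{h}$ and hence $\oscnorm{H}\le 2\esssup{g_s}\oscnorm{h}$, and it satisfies $\filt{\Xinit,s-1}[H]=0$ because $\filt{\Xinit,s}[\bar h]=0$ and, by the filtering recursion, $\filt{\Xinit,s}[\bar h]\propto\filt{\Xinit,s-1}[H]$. Thus the conditional mean is itself a ratio whose numerator concentrates around $0$ and whose denominator concentrates around $\filt{\Xinit,s-1}[\adjfuncforward{s}]>0$, both by the induction hypothesis applied to the bounded test functions $H$ and $\adjfuncforward{s}$; since $|\CPE{a_N}{\mathcal{F}^N_{s-1}}|\le\esssup{\ewghtfunc{s}}\oscnorm{h}$, a second application of Lemma~\ref{lem:hoeffding:ratio} yields an exponential bound of the form $B\,\rme^{-C N\epsilon^2/\oscnorm{h}^2}$ for $\CPE{a_N}{\mathcal{F}^N_{s-1}}$. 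Splitting $\PP(|a_N|>\epsilon)\le\PP(|a_N-\CPE{a_N}{\mathcal{F}^N_{s-1}}|>\epsilon/2)+\PP(|\CPE{a_N}{\mathcal{F}^N_{s-1}}|>\epsilon/2)$ and combining the conditional Hoeffding bound with this estimate gives Lemma~\ref{lem:hoeffding:ratio}-\eqref{it:hoeff:a} for $a_N$; the bound \eqref{it:hoeff:b} for $b_N$ follows identically, taking $H\equiv\int q(\cdot,x')g_s(x')\rmd x'$ and centering around the positive constant $\filt{\Xinit,s-1}[H]/\filt{\Xinit,s-1}[\adjfuncforward{s}]$. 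A final application of Lemma~\ref{lem:hoeffding:ratio} to $a_N/b_N$ closes the induction.

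The step I expect to be the main obstacle is the passage from the conditional Hoeffding bound to an unconditional exponential inequality while simultaneously controlling the conditional expectation $\CPE{a_N}{\mathcal{F}^N_{s-1}}$: the latter is not a plain empirical average but a ratio of two empirical filter integrals, so it cannot be bounded directly and must be handled by recognizing the numerator $H$ as a bounded test function of the time-$(s-1)$ filter to which the induction hypothesis applies, feeding into a second use of the ratio lemma. Keeping track of the oscillation constants through these nested applications, so that the final exponent retains the exact dependence $\epsilon^2/\oscnorm{h}^2$, is the only delicate bookkeeping.
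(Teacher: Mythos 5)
Your proof is correct, and it is essentially the argument the paper implicitly relies on: the paper gives no proof of this proposition but defers to \cite{douc:garivier:moulines:olsson:2011}, where the result is established by exactly your scheme --- induction on $s$, a conditional Hoeffding bound given the time-$(s-1)$ particle cloud for the conditionally i.i.d.\ draws, and nested applications of Lemma~\ref{lem:hoeffding:ratio}, first to the conditional mean $\filt{\Xinit,s-1}^N[H]/\filt{\Xinit,s-1}^N[\adjfuncforward{s}]$ (using the induction hypothesis on the bounded test functions $H$ and $\adjfuncforward{s}$, with $\filt{\Xinit,s-1}[H]=0$) and then to the self-normalized ratio $a_N/b_N$. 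The only blemish is the intermediate identity $\kissforward{s}{s}(x,x')\,\ewghtfunc{s}(x,x')=q(x,x')g_s(x')$, whose right-hand side should carry a factor $\adjfuncforward{s}(x)^{-1}$; since that factor cancels against the $\adjfuncforward{s}$ appearing in the proposal density \eqref{eq:instrumental-distribution-filtering}, your formula for the conditional mean, and hence the rest of the argument, is unaffected.
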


Proposition~\ref{prop:exponential-inequality-backward} provides an exponential inequality for the backward information filter $\backDist[][\gamma]{t}{T}$ and its unnormalized approximation. Its proof is similar to the proof \cite[Theorem~5]{douc:garivier:moulines:olsson:2011} and is omitted.

\begin{prop}
\label{prop:exponential-inequality-backward}
Assume that A\ref{assum:bound-likelihood} and A\ref{assum:borne-TwoFilters} hold for some $T > 0$. Then, for all $0\le t\leq T$,
there exist $0<B_{t|T}, C_{t|T}<\infty$  such that for all $N\ge 1$, $\epsilon > 0$, and all $h \in \functionset[b]{X}$,
\begin{equation*}
 \label{eq:Hoeffding-Backward-Filtering-normalized}
\PP\left[ \left| \backsumwght{t}{T}^{-1} \sum_{i=1}^N \ebackwght{t}{T}{i} h(\ebackpart{t}{T}{i}) - \backDist[][\gamma]{t}{T}[h] \right| \geq \epsilon \right] \leq B_{t|T} \rme^{-C_{t|T} N \epsilon^2 / \oscnorm{h}^2}  \eqsp.
\end{equation*}
\end{prop}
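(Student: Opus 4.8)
The plan is to proceed by backward induction on $t$, from $t=T$ down to $t=0$, following closely the strategy used for the forward filter in Proposition~\ref{prop:exponential-inequality-forward}. Since neither the estimator nor its target changes when $h$ is shifted by a constant, I would replace $h$ by $\tilde{h}\eqdef h-\backDist[][\gamma]{t}{T}[h]$ and cast the centred estimator as a ratio $a_N/b_N$ with
\[
a_N\eqdef N^{-1}\sum_{i=1}^N\ebackwght{t}{T}{i}\tilde{h}(\ebackpart{t}{T}{i})\eqsp,\qquad b_N\eqdef N^{-1}\backsumwght{t}{T}\eqsp.
\]
The whole statement then follows from Lemma~\ref{lem:hoeffding:ratio} once I exhibit exponential controls for $a_N$ and for the deviation of $b_N$ from a strictly positive limit $b$, together with the almost sure bound $|a_N/b_N|\le\esssup{\ebackwghtfunc{t}{T}}\oscnorm{h}$ furnished by A\ref{assum:borne-TwoFilters}.

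For the base case $t=T$ the summands $\ebackwght{T}{T}{i}\tilde{h}(\ebackpart{T}{T}{i})$ are i.i.d.\ and bounded by $\esssup{\ebackwghtfunc{T}{T}}\oscnorm{h}$, so Hoeffding's inequality yields the exponential bound on $a_N$ directly, and the same argument applies to $b_N$; Lemma~\ref{lem:hoeffding:ratio} then closes the base case.

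For the inductive step I would condition on $\mathcal{G}^N_{t+1|T}\eqdef\sigma(\ebackpart{t+1}{T}{j},\ebackwght{t+1}{T}{j},1\le j\le N)$. Conditionally on this $\sigma$-field the pairs $(\check{I}^i_t,\ebackpart{t}{T}{i})$ are i.i.d.\ draws from \eqref{eq:ebackparticle-update-backward-filtering}, so the summands of $a_N$ are conditionally i.i.d.\ and bounded by $\esssup{\ebackwghtfunc{t}{T}}\oscnorm{h}$, and Hoeffding's inequality controls the deviation of $a_N$ from its conditional mean. A direct computation from \eqref{eq:ebackparticle-update-backward-filtering}--\eqref{eq:ebackweight-update-backward-filtering}, in which the kernel $\kiss{t}{T}$ and the adjustment weight $\adjfunc{t}{T}{}$ cancel, shows that this conditional mean equals $\backDist[][\gamma]{t+1}{T}^N[F]/\backDist[][\gamma]{t+1}{T}^N[G]$, with
\[
F(x')\eqdef\gamma_{t+1}^{-1}(x')\!\int\gamma_t(x)g_t(x)q(x,x')\tilde{h}(x)\,\rmd x\eqsp,\qquad G(x')\eqdef\gamma_{t+1}^{-1}(x')\adjfunc{t}{T}{x'}\eqsp.
\]
Both $F$ and $G$ are bounded with finite oscillation under A\ref{assum:borne-TwoFilters}: writing $\gamma_t g_t q=\ebackwghtfunc{t}{T}\,\adjfunc{t}{T}{}\,\kiss{t}{T}$ and integrating out the kernel gives $\esssup{F}\le\esssup{\ebackwghtfunc{t}{T}}\esssup{\adjfunc{t}{T}{}/\gamma_{t+1}}\oscnorm{h}$, while $\esssup{G}\le\esssup{\adjfunc{t}{T}{}/\gamma_{t+1}}$. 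The recursion \eqref{eq:recurBack} gives $\backDist[][\gamma]{t+1}{T}[F]\propto\backDist[][\gamma]{t}{T}[\tilde{h}]=0$ and $\backDist[][\gamma]{t+1}{T}[G]=\backDist[][\gamma]{t+1}{T}[\adjfunc{t}{T}{}\gamma_{t+1}^{-1}]>0$, so a second application of Lemma~\ref{lem:hoeffding:ratio}, whose hypotheses on the numerator and denominator are supplied by the induction hypothesis at time $t+1$ applied to the bounded test functions $F$ and $G$, yields an exponential bound on the conditional mean. Combining the two controls through the conditional Hoeffding bound closes the induction for $a_N$, and the analysis of $b_N$ is identical with $\tilde{h}$ replaced by $\mathbf{1}$.

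The main obstacle is not conceptual but lies in the bookkeeping of how the oscillation constants propagate through the two nested invocations of Lemma~\ref{lem:hoeffding:ratio} and the conditional Hoeffding step; one must verify at each level that the relevant test functions are bounded and have finite oscillation, which is precisely where A\ref{assum:borne-TwoFilters} enters, ensuring simultaneously that $\esssup{\ebackwghtfunc{t}{T}}<\infty$, $\esssup{\adjfunc{t}{T}{}/\gamma_{t+1}}<\infty$ and $|\kiss{t}{T}|_\infty<\infty$, and hence that the limiting denominator $b$ stays bounded away from zero and the constants $B_{t|T},C_{t|T}$ remain finite throughout the induction.
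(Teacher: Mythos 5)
Your proof is correct and is essentially the argument the paper intends: the paper omits its own proof, stating only that it follows the proof of the forward-filter deviation inequality in \cite{douc:garivier:moulines:olsson:2011}, and your backward induction --- conditional Hoeffding step given $\mathcal{G}^N_{t+1|T}$, cancellation of $\adjfunc{t}{T}{}$ and $\kiss{t}{T}$ in the conditional mean so that it becomes the ratio $\backDist[][\gamma]{t+1}{T}^N[F]/\backDist[][\gamma]{t+1}{T}^N[G]$, and the nested applications of Lemma~\ref{lem:hoeffding:ratio} --- is precisely that argument transposed to the backward information filter. The bookkeeping you flag (boundedness of $F$ and $G$ via $\esssup{\ebackwghtfunc{t}{T}}$ and $\esssup{\adjfunc{t}{T}{}/\gamma_{t+1}}$, and $\backDist[][\gamma]{t+1}{T}[F]=0$ via the recursion \eqref{eq:recurBack}) is exactly where A\ref{assum:borne-TwoFilters} is used, so no gap remains.
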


\section{Asymptotic normality of the forward filter and the backward information filter}
\label{sec:appendix:TCL}

Proposition~\ref{prop:CLT-forward} provides a CLT for the weighted particles $\{  (\ewght{s}{i},\epart{s}{i})\}_{i=1}^N$ approximating the filtering distribution $\filt{\Xinit,s}$ and is proved for instance in \cite{delmoral:2004}.
\begin{prop}
\label{prop:CLT-forward}
Assume that A\ref{assum:bound-likelihood} and A\ref{assum:borne-FFBS} hold for some $T > 0$. Then, for all $0\le s\le T$ and all $h \in \functionset[b]{X}$,
\[
N^{1/2} \left(\sum_{i=1}^N \frac{\ewght{s}{i}}{\sumwght{s}} h(\epart{s}{i}) - \filt{\Xinit,s}[h]\right) \dlim_{N \to \infty} \mathcal{N}\left(0,\asymVar[\Xinit]{s}{h - \filt{\Xinit,s}[h]}\right)\eqsp,
\]
where 
\begin{align*}
\asymVar[\Xinit]{0}{h} &\eqdef \frac{ \int \XinitIS{0}(\rmd x_0)\ewght{0}{2}(x_0)h^2(x_0)}{\left(\int \XinitIS{0}(\rmd x_0)\ewght{0}{}(x_0)\right)^{2}}\quad\mbox{and for all\;} s\ge 1\eqsp,\\
\asymVar[\Xinit]{s}{h} &\eqdef \frac{\asymVar[\Xinit]{s-1}{\int q(\cdot,x_{s})g_s(x_s)h(x_s)\rmd x_s}}{\filt{\chi,s-1}^2\left[\int q(\cdot,x_s)g_s(x_s)\rmd x_s\right]}\\
&\hspace{2cm}+\frac{\filt{\Xinit,s-1}\left[\int \ewght{s}{}(\cdot,x_s)q(\cdot,x_s)g_s(x_s)h^2(x_s)\rmd x_s\right]\filt{\chi,s-1}[\adjfuncforward{s}]}{\filt{\chi,s-1}^2\left[\int q(\cdot,x_s)g_s(x_s)\rmd x_s\right]}\eqsp.
\end{align*}
\end{prop}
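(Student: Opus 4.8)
The plan is to argue by induction on $s$, using the invariance principle for triangular arrays of dependent variables in \cite[Theorem~A.3]{douc:moulines:2008}, precisely the device already exploited in the proof of Theorem~\ref{thm:CLT-Two-Populations}; this gives a self-contained route to the result quoted from \cite{delmoral:2004}. Writing $\tilde h \eqdef h - \filt{\Xinit,s}[h]$, the first step is the reduction
\[
\sqrt{N}\left(\filt{\Xinit,s}^N[h] - \filt{\Xinit,s}[h]\right) = \left(N^{-1}\sumwght{s}\right)^{-1}\sum_{i=1}^N U_{N,i}\eqsp,\qquad U_{N,i}\eqdef N^{-1/2}\ewght{s}{i}\tilde h(\epart{s}{i})\eqsp,
\]
so that, by Slutsky's lemma, it suffices to prove a CLT for $\sum_i U_{N,i}$ and to identify the limit of $N^{-1}\sumwght{s}$. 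For the base case $s=0$ the particles are \iid, $\{U_{N,i}\}$ is a sum of \iid\ centred variables with finite variance (since $\esssup{\ewght{0}{}}<\infty$ under A\ref{assum:borne-FFBS}), and the classical CLT together with $N^{-1}\sumwght{0}\plim\XinitIS{0}[\ewght{0}{}]$ yields exactly $\asymVar[\Xinit]{0}{\tilde h}$.

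For the inductive step I assume the CLT at time $s-1$ and set $\mcg{N,i}\eqdef\sigma(\{\epart{s}{j}\}_{j\le i},\{(\ewght{u}{\ell},\epart{u}{\ell})\}_{\ell=1}^N,0\le u\le s-1)$. Conditionally on $\mcg{N,0}$ the pairs $(I_s^\ell,\epart{s}{\ell})$ are \iid\ draws from \eqref{eq:instrumental-distribution-filtering}, so each $\CPE{U_{N,i}}{\mcg{N,i-1}}$ is $\mcg{N,0}$-measurable and, after cancelling the factors $\adjfuncforward{s}$ and $\kissforward{s}{s}$ between the proposal and the weight,
\[
\sum_{i=1}^N \CPE{U_{N,i}}{\mcg{N,i-1}} = \sqrt{N}\,\frac{\filt{\Xinit,s-1}^N\!\left[\int q(\cdot,x)g_s(x)\tilde h(x)\rmd x\right]}{\filt{\Xinit,s-1}^N[\adjfuncforward{s}]}\eqsp.
\]
Because the filter recursion gives $\filt{\Xinit,s-1}[\int q(\cdot,x)g_s(x)\tilde h(x)\rmd x]=0$, the induction hypothesis provides a Gaussian limit for the numerator with variance $\asymVar[\Xinit]{s-1}{\int q(\cdot,x)g_s(x)\tilde h(x)\rmd x}$, while the denominator converges in probability to $\filt{\Xinit,s-1}[\adjfuncforward{s}]$ by Proposition~\ref{prop:exponential-inequality-forward}; Slutsky then identifies the first (mutation) contribution to the asymptotic variance.

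It remains to handle the conditional fluctuations. Using $\ewghtfunc{s}(x,x')=q(x,x')g_s(x')/(\adjfuncforward{s}(x)\kissforward{s}{s}(x,x'))$ one obtains
\[
\sum_{i=1}^N \CPE{U_{N,i}^2}{\mcg{N,i-1}} = \frac{\filt{\Xinit,s-1}^N\!\left[\int \ewghtfunc{s}(\cdot,x)q(\cdot,x)g_s(x)\tilde h^2(x)\rmd x\right]}{\filt{\Xinit,s-1}^N[\adjfuncforward{s}]}\eqsp,
\]
which converges in probability to a deterministic limit by Proposition~\ref{prop:exponential-inequality-forward}, while $\sum_i\CPE{U_{N,i}}{\mcg{N,i-1}}^2\plim 0$ because it is the square of the $\mcg{N,0}$-measurable ratio above, whose limit is $0$. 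The bound $|U_{N,i}|\le N^{-1/2}\esssup{\ewghtfunc{s}}\oscnorm{h}$ supplied by A\ref{assum:borne-FFBS} makes the conditional Lindeberg condition immediate. Applying \cite[Theorem~A.3]{douc:moulines:2008} conditionally on $\mcg{N,0}$ shows the centred part converges to a Gaussian with this deterministic variance; being deterministic, it is asymptotically independent of the $\mcg{N,0}$-measurable conditional-mean term, so the two Gaussians add. Finally $N^{-1}\sumwght{s}\plim\filt{\Xinit,s-1}[\int q(\cdot,x)g_s(x)\rmd x]/\filt{\Xinit,s-1}[\adjfuncforward{s}]$, and dividing the total variance by the square of this limit reproduces the two terms of the recursion defining $\asymVar[\Xinit]{s}{}$.

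The individual estimates are all routine consequences of the boundedness in A\ref{assum:bound-likelihood}--A\ref{assum:borne-FFBS} and of the consistency furnished by Proposition~\ref{prop:exponential-inequality-forward}; the real work is the bookkeeping that channels the conditional-mean limit and the conditional-variance limit through the random normalisation $N^{-1}\sumwght{s}$ so that they collapse exactly onto the stated recursion. The one point demanding care is that \cite[Theorem~A.3]{douc:moulines:2008} requires convergence in probability of the \emph{conditional characteristic function}, not merely of the conditional variance, which is what legitimises combining the two Gaussian contributions.
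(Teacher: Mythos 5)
Your proof is correct, but it is not the paper's proof: the paper establishes Proposition~\ref{prop:CLT-forward} by citation only (``proved for instance in \cite{delmoral:2004}''), whereas you give a self-contained argument. What you have written is, in effect, the single-population specialization of the paper's own proof of Theorem~\ref{thm:CLT-Two-Populations}: the same splitting of $\sum_{i}U_{N,i}$ into a conditional-mean term (treated by the induction hypothesis plus Slutsky, after cancelling $\adjfuncforward{s}$ and $\kissforward{s}{s}$ between proposal and weight) and a centred term (treated by the conditional Lindeberg CLT of \cite[Theorem~A.3]{douc:moulines:2008}), the same probability-limit identifications via Proposition~\ref{prop:exponential-inequality-forward}, and the same passage through the random normalisation $N^{-1}\sumwght{s}$; your closing remark that Theorem~A.3 delivers convergence of the \emph{conditional characteristic function}, which is what lets the two Gaussian contributions add, is exactly the point the paper relies on implicitly in that proof. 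The trade-off is clear: the citation buys brevity, while your route yields a proof under exactly the paper's assumptions A\ref{assum:bound-likelihood}--A\ref{assum:borne-FFBS} and for exactly the auxiliary sampler \eqref{eq:instrumental-distribution-filtering}--\eqref{eq:weight-update-filtering} with general adjustment multipliers $\adjfuncforward{s}$, a setting not literally covered by the Feynman--Kac framework of \cite{delmoral:2004}, and it makes the appendix logically self-supporting since Theorem~\ref{thm:CLT-Two-Populations} consumes this proposition as an input. One small repair: your filtration $\mcg{N,i}$ should also contain the auxiliary indices $\{I_s^j\}_{j\le i}$ (and the time-$(s-1)$ weights), since $U_{N,i}$ depends on $\epart{s-1}{I_s^i}$ through $\ewght{s}{i}$; because the pairs $(I_s^i,\epart{s}{i})$ are i.i.d.\ conditionally on the time-$(s-1)$ particle system, every conditional expectation you compute is unchanged, so this is purely a bookkeeping fix --- and the paper's proof of Theorem~\ref{thm:CLT-Two-Populations} carries the same informality.
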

Proposition~\ref{prop:CLT-backward} provides a CLT for the weighted particles $\{  (\ebackwght{t}{T}{j},\ebackpart{t}{T}{j})\}_{j=1}^N$ approximating the backward information filter. Its proof follows the same lines as the proof of Proposition~\ref{prop:CLT-forward} and is omitted for brevity.
\begin{prop}
\label{prop:CLT-backward}
Assume that A\ref{assum:bound-likelihood} and A\ref{assum:borne-TwoFilters} hold. Then, for all $0\le t\le T$ and all $h \in \functionset[b]{X}$,
\[
N^{1/2} \left(\sum_{j=1}^N \frac{\ebackwght{t}{T}{j}}{\backsumwght{t}{T}} h(\ebackpart{t}{T}{j})- \backDist[][\gamma]{t}{T}[h]\right) \dlim_{N \to \infty} \mathcal{N}\left(0,\backasymVar[\gamma]{t}{T}{h-\backDist[][\gamma]{t}{T}[h]}\right)\eqsp,
\]
where 
\begin{align*}
\backasymVar[\gamma]{T}{T}{h} &\eqdef \frac{ \int \ebackinit_T(\rmd x_T)\ebackwght{T}{T}{2}(x_T)h^2(x_T)}{\left( \int \ebackinit_T(\rmd x_T)\ebackwght{T}{T}{}(x_T)\right)^{2}}\quad\mbox{and for all\;} t\le T-1\eqsp,\\
\backasymVar[\gamma]{t}{T}{h} &\eqdef \frac{\backasymVar[\gamma]{t+1}{T}{\int\gamma_t(x_t)g_t(x_t)q(x_t,\cdot)\gamma^{-1}_{t+1}(\cdot)h(x_t)\rmd x_t}}{\backDist[][\gamma]{t+1}{T}^2\left[\int\gamma_t(x_t)g_t(x_t)q(x_t,\cdot)\gamma^{-1}_{t+1}(\cdot)\rmd x_t\right]}\\
&\hspace{.5cm}+\frac{\backDist[][\gamma]{t+1}{T}\left[\int\backweight_t(x_t,\cdot)q(x_t,\cdot)g_t(x_t)\gamma_t(x_t)\gamma^{-1}_{t+1}(\cdot)h^2(x_t)\rmd x_t\right]\backDist[][\gamma]{t+1}{T}\left[\adjfunc{t}{T}{}\gamma_{t+1}^{-1}\right]}{\backDist[][\gamma]{t+1}{T}^2\left[\int\gamma_t(x_t)g_t(x_t)q(x_t,\cdot)\gamma^{-1}_{t+1}(\cdot)\rmd x_t\right]}\eqsp.
\end{align*}
\end{prop}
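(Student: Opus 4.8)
The plan is to follow the proof of Theorem~\ref{thm:CLT-Two-Populations} almost line for line, arguing by a \emph{backward} induction on $t$ that descends from $t=T$ to the target time and relying at each step on the invariance principle for triangular arrays of dependent variables of \cite[Theorem~A.3]{douc:moulines:2008}. For the base case $t=T$, the particles $\{\ebackpart{T}{T}{i}\}_{i=1}^N$ are \iid\ draws from $\ebackinit_T$ with weights $\ebackwght{T}{T}{i}=\ebackwghtfunc{T}{T}(\ebackpart{T}{T}{i})$, so the statement reduces to the classical central limit theorem for self-normalised importance sampling estimators, and the finiteness of $\backasymVar[\gamma]{T}{T}{\cdot}$ follows from $\esssup{\ebackwghtfunc{T}{T}}<\infty$ under A\ref{assum:borne-TwoFilters}.

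For the inductive step, suppose the claim holds at time $t+1$. Writing $\tilde{h}_t\eqdef h-\backDist[][\gamma]{t}{T}[h]$ and $U_{N,i}\eqdef N^{-1/2}\ebackwght{t}{T}{i}\tilde{h}_t(\ebackpart{t}{T}{i})$, the quantity of interest equals $(\backsumwght{t}{T}/N)^{-1}\sum_{i=1}^N U_{N,i}$. Let $\mcg{N,i}$ denote the $\sigma$-field generated by the first $i$ time-$t$ particles together with the entire weighted system from time $t+1$ to $T$. Conditionally on the latter, the pairs $(\check{I}^i_t,\ebackpart{t}{T}{i})$ are drawn independently from \eqref{eq:ebackparticle-update-backward-filtering}, so a direct computation using the weight definition \eqref{eq:ebackweight-update-backward-filtering} shows that $\sum_{i=1}^N\CPE{U_{N,i}}{\mcg{N,i-1}}=N^{1/2}\backDist[][\gamma]{t+1}{T}^N[H_t]/\backDist[][\gamma]{t+1}{T}^N[\adjfunc{t}{T}{}\gamma_{t+1}^{-1}]$, where $H_t(x_{t+1})\eqdef\gamma_{t+1}^{-1}(x_{t+1})\int\gamma_t(x_t)g_t(x_t)q(x_t,x_{t+1})\tilde{h}_t(x_t)\rmd x_t$ and $\backDist[][\gamma]{t+1}{T}^N$ is the time-$(t+1)$ particle approximation. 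The recursion \eqref{eq:recurBack} gives $\backDist[][\gamma]{t+1}{T}[H_t]\propto\backDist[][\gamma]{t}{T}[\tilde{h}_t]=0$, so the induction hypothesis yields a CLT for this term and produces the first contribution of the recursion for $\backasymVar[\gamma]{t}{T}{\cdot}$.

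Next I would verify the two hypotheses of \cite[Theorem~A.3]{douc:moulines:2008}. The conditional variance $\sum_{i=1}^N\CPE{U_{N,i}^2}{\mcg{N,i-1}}$ simplifies, after cancelling the proposal factors $\adjfunc{t}{T}{}$ and $\kiss{t}{T}$ against \eqref{eq:ebackweight-update-backward-filtering}, to a self-normalised average over the time-$(t+1)$ sample of $x_{t+1}\mapsto\int\backweight_t(x_t,x_{t+1})q(x_t,x_{t+1})g_t(x_t)\gamma_t(x_t)\gamma_{t+1}^{-1}(x_{t+1})\tilde{h}_t^2(x_t)\rmd x_t$; by Proposition~\ref{prop:exponential-inequality-backward} it converges in probability to its $\backDist[][\gamma]{t+1}{T}$-integral, while $\sum_{i=1}^N\CPE{U_{N,i}}{\mcg{N,i-1}}^2\plim 0$, so the variance condition analogous to \eqref{eq:condRD-1} holds with the second contribution of the recursion. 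The Lindeberg condition analogous to \eqref{eq:condRD-3} is immediate from the uniform bound $|U_{N,i}|\le N^{-1/2}\esssup{\ebackwghtfunc{t}{T}}\oscnorm{h}$ valid under A\ref{assum:borne-TwoFilters}. Finally, observing that $\backsumwght{t}{T}/N\plim\backDist[][\gamma]{t+1}{T}[\int\gamma_t(x_t)g_t(x_t)q(x_t,\cdot)\gamma_{t+1}^{-1}(\cdot)\rmd x_t]/\backDist[][\gamma]{t+1}{T}[\adjfunc{t}{T}{}\gamma_{t+1}^{-1}]$ and invoking Slutsky's lemma assembles the two contributions into the stated recursive expression for $\backasymVar[\gamma]{t}{T}{\cdot}$.

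The main obstacle will be the algebraic bookkeeping in the conditional-moment computations: one must carefully track the $\gamma_{t+1}^{-1}$ factors and the proposal quantities $(\adjfunc{t}{T}{},\kiss{t}{T})$ as they cancel against the importance weight \eqref{eq:ebackweight-update-backward-filtering}, verify that the self-normalising denominators in the conditional mean and in the conditional variance share the common limit $\backDist[][\gamma]{t+1}{T}[\adjfunc{t}{T}{}\gamma_{t+1}^{-1}]$, and confirm that the overall factor $(\backsumwght{t}{T}/N)^{-2}$ combines with these denominators to reproduce exactly the normalising term $\backDist[][\gamma]{t+1}{T}^2[\int\gamma_t(x_t)g_t(x_t)q(x_t,\cdot)\gamma_{t+1}^{-1}(\cdot)\rmd x_t]$ appearing in both terms of the recursion. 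Everything else is a routine transcription of the forward argument with the time direction reversed.
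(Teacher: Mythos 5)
Your proposal is correct and is exactly the argument the paper intends: the paper omits this proof, stating only that it ``follows the same lines as the proof of Proposition~\ref{prop:CLT-forward}'', and your backward induction via the triangular-array invariance principle of \cite[Theorem~A.3]{douc:moulines:2008} is precisely that argument transposed in time, matching the machinery the paper itself deploys for Theorem~\ref{thm:CLT-Two-Populations}. Your conditional mean/variance computations, the cancellation of $(\adjfunc{t}{T}{},\kiss{t}{T})$ against \eqref{eq:ebackweight-update-backward-filtering}, and the Slutsky step reproduce the stated recursion for $\backasymVar[\gamma]{t}{T}{\cdot}$ exactly.
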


\bibliographystyle{plain}
\bibliography{TwoFilterbib}

\end{document}